\definecolor{purple}{rgb}{0.9,0,0.8}
\newtheorem{theorem}{Theorem}
\newtheorem{lemma}{Lemma}
\newtheorem{conj}{Conjecture}
\def\beq{ \begin{equation} }
\def\eeq{ \end{equation} }
\def\mn{\medskip\noindent}
\def\ms{\medskip}
\def\ep{\epsilon}
\def\square{\vcenter{\vbox{\hrule height .4pt
  \hbox{\vrule width .4pt height 5pt \kern 5pt
        \vrule width .4pt} \hrule height .4pt}}}
\def\eopt{\hfill$\square$}
\def\xyz#1{}
\def\clearp{}
\begin{document}

\title{The Evolving Voter Model on Thick Graphs}
 \author{Anirban Basak, Rick Durrett, and Yuan Zhang 
\thanks{RD and YZ were partially supported by NSF grant DMS 1305997. YZ was a postdoc at UCLA
in 2015--2016, and now has a three-year postdoc at Texas A\&M.}\\
Dept.~of Math, Duke U., Durham, NC 27708-0320}
\date{\today}

\maketitle

\begin{abstract}
In the evolving voter model, when an individual interacts with a neighbor having an opinion different from theirs, they will
with probability $1-\alpha$ imitate the neighbor but with probability $ \alpha$ will sever the 
connection and choose a new neighbor at random (i) from the graph or (ii) from those with the same opinion. Durrett et al.~\cite{evo8} used simulation and heuristics to study these dynamics on sparse graphs.
Recently Basu and Sly \cite{BaSly} have analyzed this system  with $1-\alpha = \nu/N$ on a dense Erd\H{o}s-R\'{e}nyi graph $G(N,1/2)$ and
rigorously proved that there is a phase transition from rapid disconnection into components with a single opinion
to prolonged persistence of discordant edges as $\nu$ increases.
In this paper, we consider the intermediate situation of Erd\H{o}s-R\'enyi random graphs with average degree $L=N^a$ where $0 < a < 1$.
Most of the paper is devoted to a rigorous analysis of an approximation of the dynamics called the approximate master equation.
Using ideas of \cite{LMR} and \cite{Silk} we are able to analyze these dynamics in great detail. 
\end{abstract}  

\section{Introduction}

We consider a simplified model of a social network in which individuals have one of two opinions (called 0 and 1) and their opinions and the network connections coevolve. In the discrete time formulation, oriented edges $(x,y)$ are picked at random. If $x$ and $y$ have the same opinion no change occurs.
If $x$ and $y$ have different opinions then: with probability $1-\alpha$, the individual at $x$ imitates the opinion of the one at $y$; otherwise, i.e., with probability $\alpha$, the link between them is broken and $x$ makes a new connection to an individual $z$ chosen at random (i) from those with the same opinion (``rewire-to-same''), or (ii) from the network as a whole (``rewire-to-random''). The evolution of the system stops when there are no longer any ``discordant'' edges that connect individuals with different opinions. 

Holme and Newman \cite{HN} were the first to consider a model of this type. They chose option (i), rewire-to-same, and initialized the graph with large number $K$ of 
opinions so that $N/K$ remained bounded as the number of vertices $N\to\infty$. They argued that there was a critical value $\alpha_c$ so that
for $\alpha>\alpha_c$, the graph rapidly disconnects into a large number of small components while if $\alpha < \alpha_c$, a giant community of 
like-minded individuals of size $O(N)$ formed.

The work of Holme and Newman  \cite{HN} was followed by a number of papers in the physics literature. References can be found in Durrett et al.~\cite{evo8} and Silk et al.~\cite{Silk}.
Recent papers study several variants of the model include \cite{KimHay}, \cite{NKB}, \cite{RogGro}, and \cite{DESM}.
Here we will stick to the basic version. Let $p$ be the initial fraction of voters with opinion 1 and
let $\pi$ be the fraction of voters holding the minority opinion after the evolution stops. Through a combination of simulation and heuristics, Durrett et al.~\cite{evo8} argued that 

\begin{itemize}
  \item 
In case (i), rewire-to-same, there is a critical value $\alpha_c$ which does not depend on $p$, with $\pi \approx p$ for $\alpha > \alpha_c$ and $\pi \approx 0$ for $\alpha < \alpha_c$. 
\item
In case (ii), rewire-to-random, the transition point $\alpha_c(p)$ depends on the initial density $p$. For $\alpha > \alpha_c(p)$, $\pi \approx p$, but for $\alpha < \alpha_c(\rho)$ we have $\pi(\alpha,p) = \pi(\alpha,1/2)$. 
\end{itemize}

\noindent
The graphs in Figures \ref{fig:rts} and \ref{fig:rtr} should help clarify these claims.

\begin{figure}[h] 
  \centering
  \includegraphics[bb=1 2 675 472,height=3.25in,keepaspectratio]{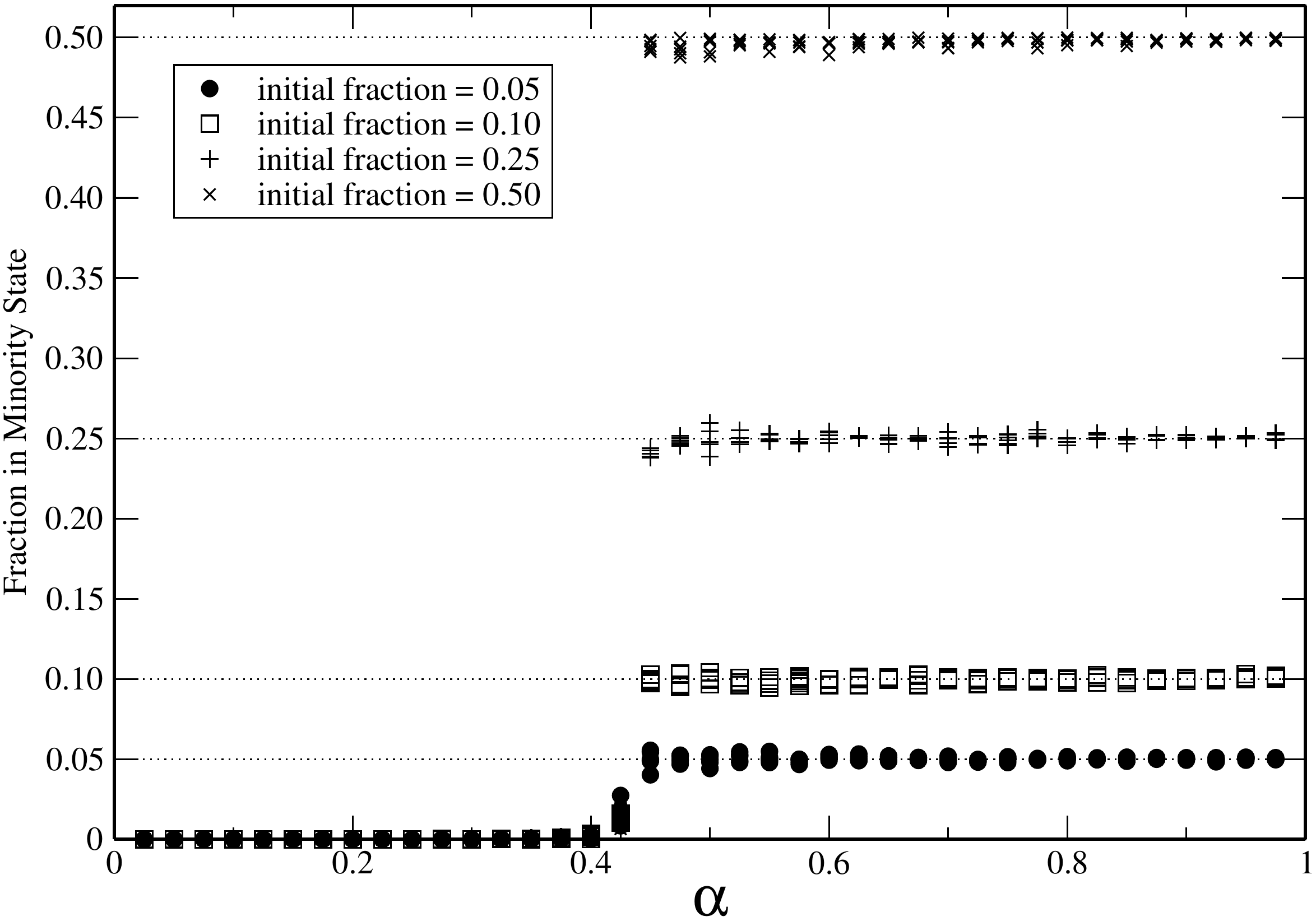}
  \caption{Simulation of rewire to same model for Erd\H{o}s-R\'{e}nyi graphs with 100,000 vertices and average degree 4. We start with an initial 
product measure with density $p= 0.5$, 0.25, 0.1, or 0.05 and vary $\alpha$. As $\alpha$ decreases from 1, the ending density $\pi(p) \approx p$ and then at $\alpha_c \approx 0.42$ it drops to $\pi(p) \approx 0$. }
  \label{fig:rts}
\end{figure}

\begin{figure}[tbp] 
  \centering
  \includegraphics[bb=1 1 675 468,height=3.25in,keepaspectratio]{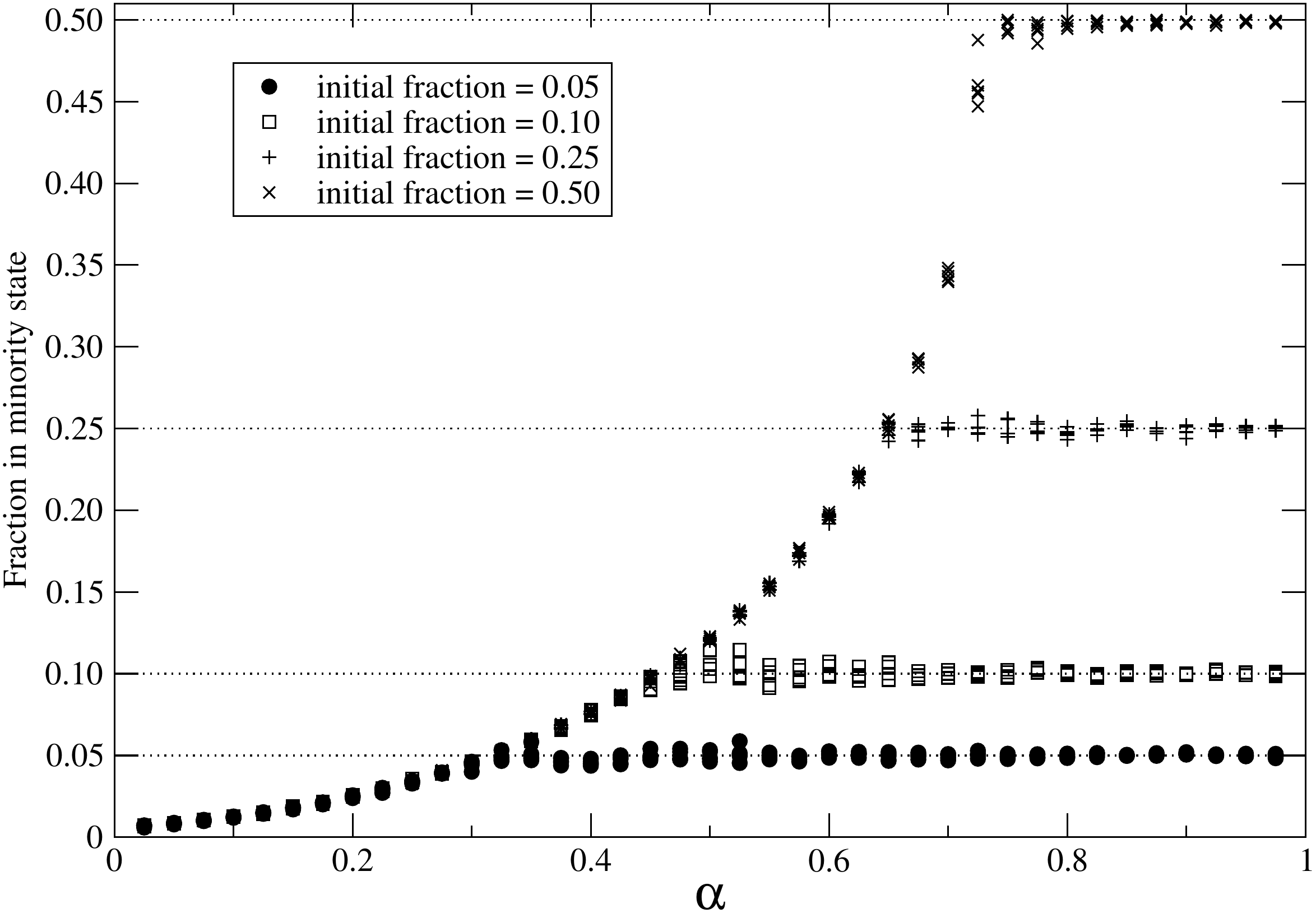}
  \caption{Simulation of rewire to random model for Erd\H{o}s-R\'{e}nyi graphs with 100,000 vertices and average degree 4, starting from 
product measure with densities $p= 0.5$, 0.25, 0.1, or 0.05. In \cite{evo8}, $\alpha \to \pi(\alpha,1/2)$ is called
the universal curve because for $p<1/2$, $\pi(\alpha,p)$ is constant for $\alpha > \alpha_c(p)$ and then follows the universal curve.}
  \label{fig:rtr}
\end{figure}

If we formulate the evolving voter model in continuous time with each oriented
edge subject to updating at rate 1, then arguments in \cite{HN} and \cite{evo8} suggest that for $\alpha > \alpha_c(p)$ the disconnection takes time $O(\log N)$, i.e., $O(N\log N)$ updates, while for $\alpha < \alpha_c(p)$ the time becomes $O(N)$, i.e., $O(N^2)$ updates. The first conclusion is easy to explain: if we
rewire-to-same and $\alpha=1$, then disconnection will occur when all of the edges have been touched. If there are $M$ edges, then by the coupon collectors problem, this requires time $O(M \log M)$ where $M$ is the number of edges. 

The explanation for the long time survival is more complicated and, at the moment, is based on phenomena observed in simulation and not yet rigorously demonstrated. The intuitive picture is motivated by a result of Cox and Greven \cite{CoxG}. To state their result we recall that the voter model on the $d$-dimensional lattice with nearest neighbor interactions has a one parameter family of stationary distributions $\nu_\theta$, indexed by the fraction of sites in state 1.

\begin{theorem}
If the voter model on the torus in $d \ge 3$ with $N$ sites starts from product measure with density $p$ then at time $Nt$ it looks locally like $\nu_{\theta(t)}$ where the density $\theta(t)$ changes according to the Wright-Fisher diffusion process
$$
d\theta_t = \sqrt{\beta_d \cdot 2\theta_t(1-\theta_t)}
$$
and $\beta_d$ is the probability that two random walks starting from neighboring sites do not hit.
\end{theorem}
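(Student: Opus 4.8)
The plan is to pass through the duality between the voter model and coalescing random walks, to reduce every assertion to hitting-time estimates for random walks on the torus $\mathbb{T}_N=(\mathbb{Z}/n\mathbb{Z})^d$ with $N=n^d$, and then to recognize the limit via the moment-dual (equivalently, Stroock--Varadhan martingale) characterization of the Wright--Fisher diffusion. Write $\eta_t$ for the voter model and recall $\eta_t(x)=\eta_0(\zeta^x_t)$, where $(\zeta^x_\cdot)_x$ is a system of coalescing random walks run backwards in time. Hence, from product measure with density $p$, for any finite $A\subset\mathbb{T}_N$,
$$ P\big(\eta_{Nt}\equiv 1\text{ on }A\big)=E\big[\,p^{\,|\mathcal P^A_{Nt}|}\,\big], $$
where $\mathcal P^A_{Nt}$ is the partition of $A$ obtained by identifying sites whose dual walks have coalesced by time $Nt$. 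In particular, writing $u_N(t):=N^{-1}\sum_x\eta_{Nt}(x)$, we get $E[u_N(t)^k]=N^{-k}\sum_{x_1,\dots,x_k}E[\,p^{\,|\mathcal P_{Nt}|}\,]$ with $A=\{x_1,\dots,x_k\}$, and in the case $k=2$, $E[\eta_{Nt}(x)\eta_{Nt}(y)]=p^2+p(1-p)\,P(\tau_{x,y}\le Nt)$ with $\tau_{x,y}$ the coalescence time of the two walks from $x$ and $y$.

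\smallskip\noindent\emph{Two time scales on the torus.} For $d\ge 3$ the dual walks operate on two widely separated scales. Up to time $o(N)$, a bounded bunch of walks started in a bounded region of $\mathbb{T}_N$ can be coupled with coalescing walks on $\mathbb{Z}^d$; in that window they coalesce ``quickly'', and the survivors diffuse apart to macroscopic distance $\Theta(n)$ by time $O(n^2)=O(N^{2/d})=o(N)$ --- this is exactly where $d\ge3$ is used. Conversely, once two walks are macroscopically separated their difference is essentially a random walk at a uniform point of $\mathbb{T}_N$, and for $d\ge3$ its hitting time of a fixed neighbourhood of the origin, thinned by the $\mathbb{Z}^d$ non-coalescence probability, is asymptotically exponential; a Green's-function identity ($G(0,e)=G(0,0)-1$, so $\beta_d=1/G(0,0)$ equals both the escape probability and the two-neighbour non-meeting probability) shows the coalescence rate of a far-apart pair is $\sim 2\beta_d/N$. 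Consequently, for fixed $k$, the $k$-particle system started from $k$ mutually macroscopically separated sites and run on time scale $N$ converges to Kingman's coalescent in which each pair merges at rate $2\beta_d$; and since all but a vanishing fraction of $k$-tuples of sites are mutually macroscopically separated, $E[u_N(t)^k]\to E[p^{\,N_t}]$, with $N_t$ the block-counting process of that coalescent started from $k$ blocks.

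\smallskip\noindent\emph{Identification of the limit.} The function $t\mapsto E[p^{\,N_t}]$ is exactly the $k$-th moment of the diffusion $d\theta_t=\sqrt{2\beta_d\,\theta_t(1-\theta_t)}\,dW_t$ with $\theta_0=p$ (for $k=2$ both sides equal $p-p(1-p)e^{-2\beta_d t}$), and these moments determine its law. Together with tightness of $(u_N)$ in $D([0,\infty),[0,1])$ --- immediate, since $u_N$ is a bounded martingale (the torus is vertex-transitive) with jumps of size $1/N$ and predictable quadratic variation $\langle u_N\rangle_t=\int_0^t \kappa\,N^{-1}D(Ns)\,ds\le Ct$, $D(\cdot)$ the number of discordant edges and $\kappa$ an explicit combinatorial constant --- this gives $u_N\Rightarrow\theta$, the Wright--Fisher diffusion. (Equivalently one checks the martingale problem for $Lf(\theta)=\beta_d\theta(1-\theta)f''(\theta)$ via $\kappa N^{-1}D(Ns)\to 2\beta_d\,\theta_s(1-\theta_s)$ in probability, i.e.\ that at macroscopic times the fraction of discordant edges has relaxed to its $\nu_{\theta_s}$-value $2\beta_d\theta_s(1-\theta_s)$; this local-equilibrium statement is again the previous step, now with $k=4$ to pass from the mean to an $L^2$ bound.) Finally, for a bounded window $\Lambda$ about a typical site, the two-scale decomposition writes $\mathcal P^{x+\Lambda}_{Nt}$ as the short-range ($\mathbb{Z}^d$) coalescence of $\Lambda$ followed by a rate-$2\beta_d$ Kingman coalescent on the surviving blocks; since the latter is precisely the Wright--Fisher moment dual, $E[p^{|\mathcal P^{x+\Lambda}_{Nt}|}]\to E_{\theta_t}\big[P_{\nu_{\theta_t}}(\eta\equiv1\text{ on }\Lambda)\big]$, which is the claim that $\eta_{Nt}$ looks locally like $\nu_{\theta_t}$.

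\smallskip\noindent\emph{The main obstacle.} The analytic core is the two-scale step: the quantitative separation of the $O(1)$ local coalescence from the $O(N)$ global coalescence, namely (a) coupling the dual walks on $\mathbb{T}_N$ with those on $\mathbb{Z}^d$ up to time $o(N)$, so that the ``fast'' escape probability is exactly $\beta_d$, and (b) the asymptotic exponentiality, with the correct rate $2\beta_d/N$, together with asymptotic independence across pairs (no simultaneous mergers, which is where $d\ge3$ is essential) of the macroscopic meeting times. This is the content of Cox's torus coalescing-random-walk estimates and of the Cox--Greven finite systems scheme; the rest is bookkeeping with moments and a standard diffusion approximation.
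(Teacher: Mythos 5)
This statement is Theorem 1 of the paper, which the authors quote from Cox and Greven \cite{CoxG} as background motivation and do not prove, so there is no in-paper argument to compare against. Your sketch correctly reproduces the standard route to this result --- coalescing-random-walk duality, the two-time-scale separation of local ($\mathbb{Z}^d$) coalescence from global (rate $2\beta_d/N$ per pair, hence Kingman on scale $Nt$) coalescence, and identification of the limit through the Wright--Fisher moment dual --- which is essentially the Cox--Greven finite systems scheme itself; the checks you record (the Green's-function identity giving $\beta_d=1/G(0,0)$, the $k=2$ moment $p-p(1-p)e^{-2\beta_d t}$ matching both sides) are right. The only caveat is the one you already flag: the quantitative torus hitting-time estimates and the asymptotic independence of pairwise meeting times are named rather than proved, so this is an accurate outline rather than a complete proof.
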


\noindent
In words, this is true because there is a separation of time scales: 

\mn
($\star$) {\it The time to converge to equilibrium is much smaller than the time needed for the density to change, so if time is scaled appropriately then the system is always close to an equilibrium and the parameter follows a diffusion process. }

\ms
Let $N_1(t)$ be the number of vertices in state 1 at time $t$. Durrett et al.~\cite{evo8} demonstrate that ($\star$) is true
for the evolving voter model by plotting various statistics versus $N_1(t)$ and showing that values were close to a curve, i.e., the values of all of the statistics are determined by $N_1(t)$. That is, there is a one parameter family of quasi-stationary distributions and the densities change slowly over time. Simulations supporting this claim for the evolving voter model on sparse graphs can be found in \cite{evo8}. Here, we will present simulation results for the version of the model in which the average degree of vertices is $L$ and the voting rate $1-\alpha = \nu/L$. (We will describe the system in more detail in the next section.) 

\clearp

\begin{figure}[h] 
  \centering
  \includegraphics[height=2.8in,keepaspectratio]{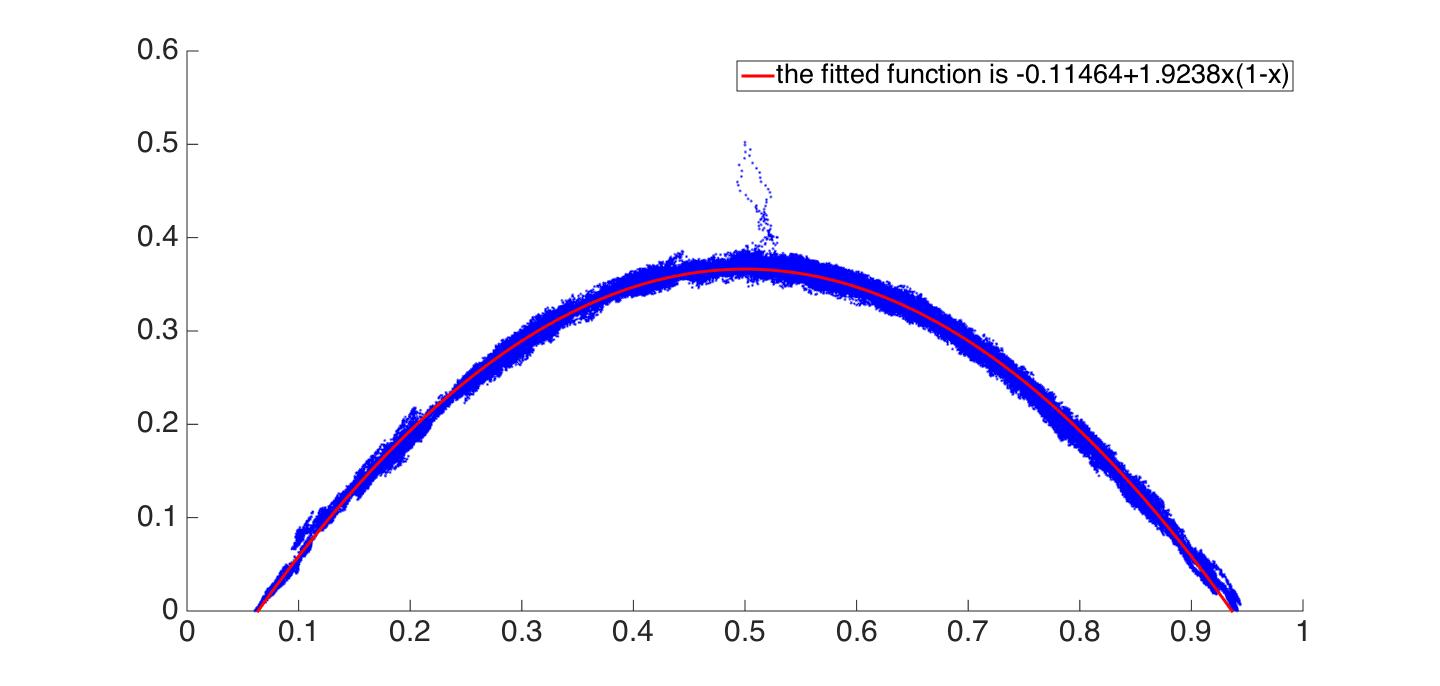}
  \caption{Plot of $N_{10}$ versus $N_1$ when $N=2500$, $L=50$, $\nu = 2.5$}
  \label{fig:2500vs50}
\end{figure}

\begin{figure}[h] 
  \centering
  \includegraphics[height=2.8in,keepaspectratio]{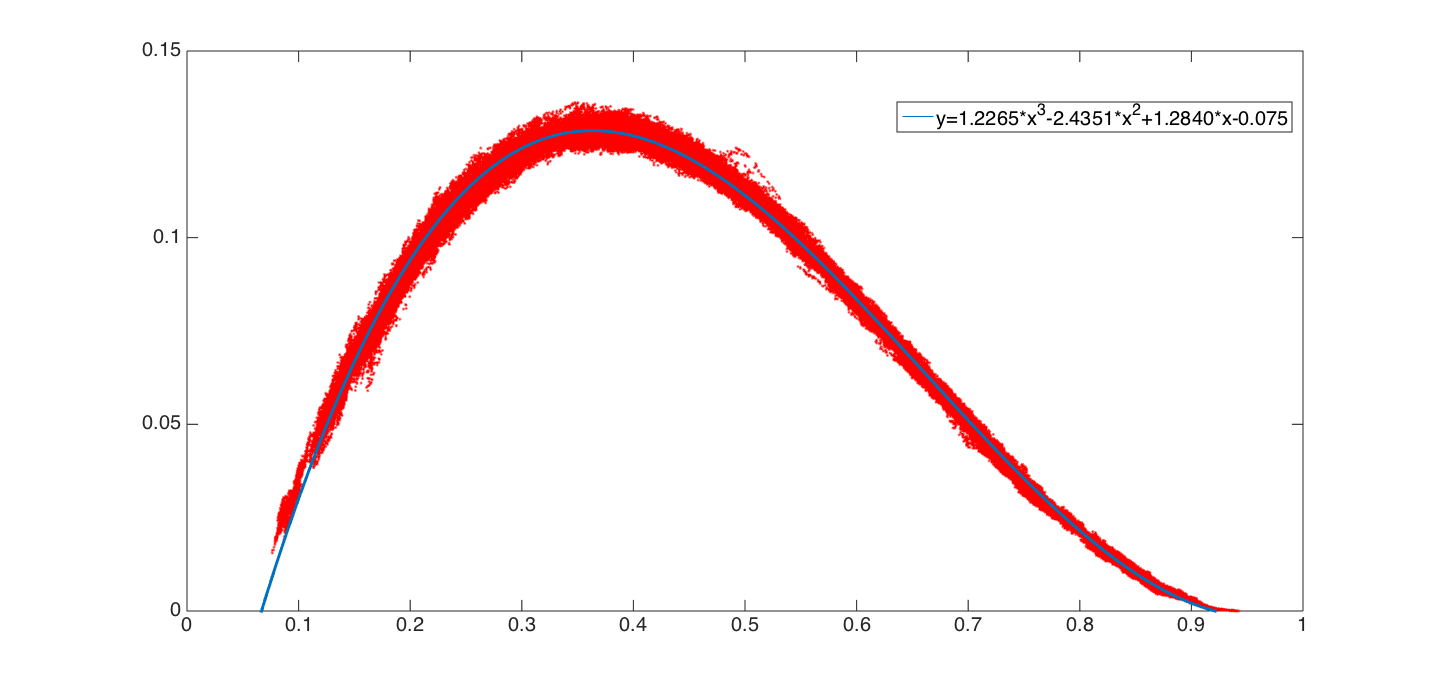}
 \caption{Plot of $N_{100}$ versus $N_1$ when $N=2500$, $L=50$, $\nu = 2.5$}
  \label{fig:N001_fit}
\end{figure}

\noindent
Figure \ref{fig:2500vs50} gives a simulation of the system with $N=2500$, $L=50$, $\nu=2.5$ and shows that the $(N_1(t),N_{10}(t))$ is well approximated by the quadratic equation $1.9238x(1-x)-0.11464$. Following \cite{evo8} we call this curve the ``arch.'' Let $N_{100}$ be the number of $(x,y,z)$ in the graph so that $y$ is a neighbor of $x$, $z$ is a neighbor of $y$ and the states of $x,y,z$ are $1,0,0$. Figure \ref{fig:N001_fit} plots the number of $N_{100}(t)$ in the graph versus $N_1(t)$. Again the values are close to a curve indicating the statistic $N_{100}$ is determined by $N_1$. This time the fitted curve is a cubic, which has the same zeros as the quadratic.

To describe the implications of this picture for the (conjectured) behavior of the process, we note that the fitted quadratic in Figure \ref{fig:2500vs50} has roots at 0.0737 and 0.9263, as does the cubic. If we start from $p=1/2$, then the system rapidly comes to a quasi-stationary distribution $\mu_{1/2}$. On the time scale $Nt$ it  is close to $\mu_{\theta(t)}$ until the value of the parameter reaches one of the endpoints of the ``arch"where $N_{10}=0$ and disconnection occurs. When $N$ is large the initial density will not change significantly at times $o(N)$, i.e., $o(N^2L)$ updates (this will be proved later, see Section \ref{sec:EEPA}) . Thus we expect the same final behavior if the initial $p \in (0.0737,0.9263)$, while if $p$ is outside the interval then rapid disconnection occurs. 

Figure \ref{fig:2500vs50b} gives a simulation of $N=2500$, $L=50$, $\nu=1$. The arch is now smaller with endpoints at roughly 0.3 and 0.7. If $\nu > \nu_c(1/2)$ and we let $a(\nu),1-a(\nu)$ be the endpoints of the arch  then
$$
\nu_c(p) = \inf\{ \nu : p \in (a(\nu),1-a(\nu)) \}.
$$
By arguments in the last paragraph when $\nu < \nu_c(p)$ rapid disconnection occurs, while if $\nu > \nu_c(p)$ the ending minority fraction is the same as if we started from $p=1/2$. Changing variables $\alpha = 1  - \nu/L$, we see that the intuitive picture agrees with the behavior shown in Figure \ref{fig:rtr}. As explained in \cite{evo8} the behavior in the case of rewire to same shown in Figure \ref{fig:rts} is due to the fact that when the arch exists it end points are always 0 and 1. See Figure 8 in that paper.

\begin{figure}[h] 
  \centering
  \includegraphics[width=5.67in,height=2.7in,keepaspectratio]{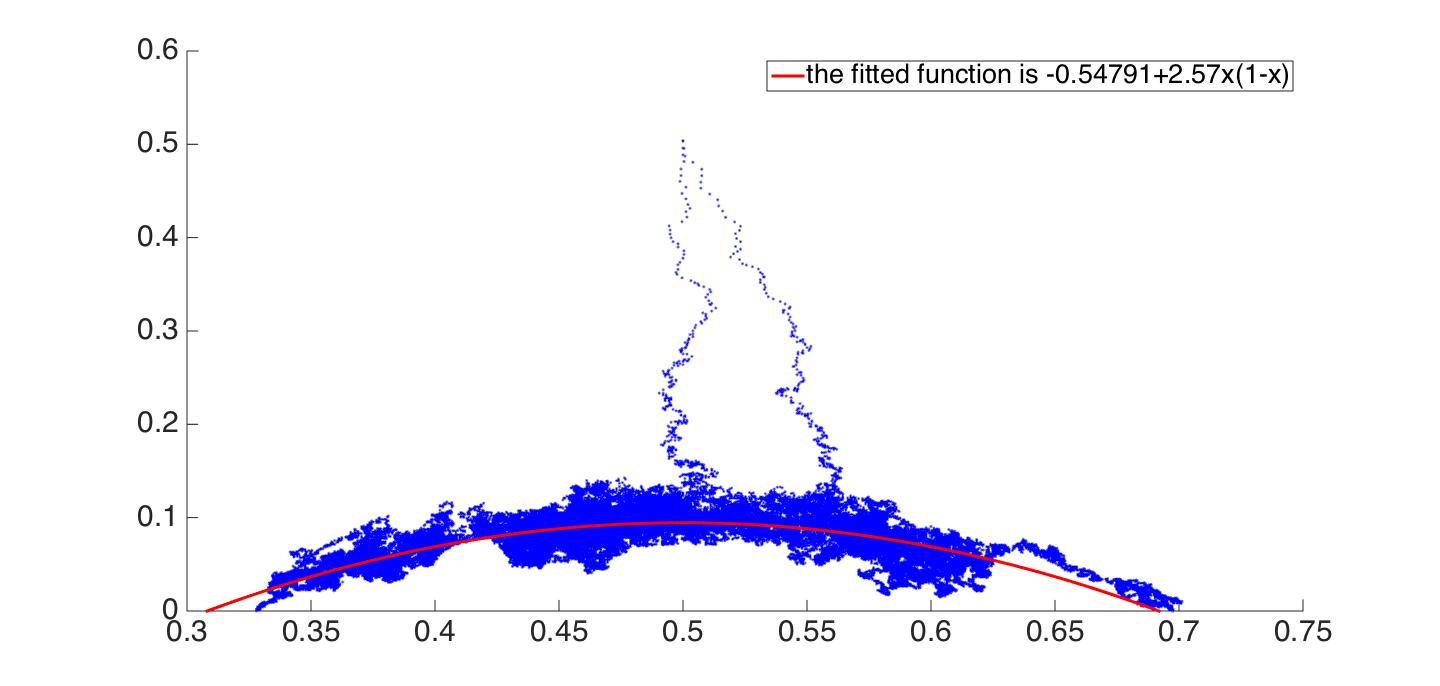}
  \caption{Plot of $N_{10}$ versus $N_1$ when $N=2500$, $L=50$, $\nu = 1$}
  \label{fig:2500vs50b}
\end{figure}

The rest of the paper is devoted to different approaches to analyzing the evolving voter model. In Section \ref{sec:BasuSly} we describe the recent results of Basu and Sly \cite{BaSly} for the process on $G(N,1/2)$ and we extend two of their results to the case of thick graphs. In each case the bounds do not depend on $L$ supporting our conjecture that $\nu_c$ does not depend on $L$. Proofs are deferred to Section \ref{sec:BSth}.

In Section \ref{sec:EEPA}, we provide exact equations for the evolution of finite-dimensional distributions in the model: e.g., $N_{ij} =$ the number of ordered pairs of adjacent sites in state $i$ and $j$, and $N_{ijk} =$ the number of ordered triples of adjacent sites. Derivation of these equations are deferred to Section \ref{sec:dereq}. As is often the case in interacting particle systems equations for $k$ site probabilities involve $k+1$ site probabilities so these cannot be solved. One approach is to use the pair approximation to express three site probabilities in terms of two site probabilities to obtain a closed system. When we began this work we thought (or hoped) that since $L\to\infty$ the pair approximation would give the right answer. As we explain in Section 3.1, this is not true for the simple reason that the second moment is larger than the square of the mean. 

In Section \ref{sec:AME}, we introduce the approximate master equation (AME) for $A_{k,\ell}(t)$ the number of sites in state 1 at time $t$ that have $k$ neighbors in state 1 and $\ell$ in state 0, and $B_{k,\ell}$ the number of sites in state 0 that have $k$ neighbors in state 1 and $\ell$ in state 0. Since we do not know the state of the neighbors of the neighbors, we use ratios of known probabilities to find their distribution. For example, we use $N_{101}/N_{10}$ to estimate the number of 1 neighbors of a 0 that is adjacent to a 1, in contrast to the pair approximation which declares that this is always equal to $N_{01}/N_0$. If we view a particle in state $i$ with $k$ neighbors in state 1 and $\ell$ in state 0 as a point at $(k,\ell)$ in plane $i$. This leads to an interesting system where vertices walk around in two planes (one for those in state 1, the other for those in state 0) and jump to the other plane when voter events change their states. Using recent results of Lawley, Mattingly, and Reed \cite{LMR} we can prove this system converges in distribution as $t\to\infty$. 

In Section \ref{sec:GFPDE}, we use an approach of Silk et al.~\cite{Silk} to derive properties of the limiting distribution. We write a pair of partial differential equations for the generating functions 
$Q(t,x,y) = \sum_{k,\ell} A_{k,\ell}(t) x^k y^{\ell}$ and $R(t,x,y) = \sum_{k,\ell} B_{k,\ell}(t) x^k y^{\ell}$, scale the degrees by $L$ and then take the limit $L\to\infty$ to arrive at PDEs for the limiting generating functions $U(a,b)$ and $V(a,b)$ for the equilibrium distribution, see \eqref{Ueqr} and \eqref{Veqr}.

In Section \ref{sec:phalf} we postulate power series solutions for the generating functions and study the symmetric case $p=1/2$. {As is often the case in interacting particle systems,} there are not enough equations to find the coefficients but we are able to compute $U_{aa}$, $U_{ab}$ and $U_{bb}$ from $U_b$ (which is the expected number of $1,0$ edge). If we use simulations to find $U_b$ then the predicted values $U_{ab}$ and $U_{bb}$ are off by only 1\% while the predicted value of $U_{aa}$, the number of $111$'s, is off by 10\%.

The total number of edges is not conserved in the AME. Our initial goal was to find self-consistent solutions to the AME. That is, values of the five parameters given in \eqref{grdef} that result in an equilibrium in which the statistics agree with parameters. Silk et al.~\cite{Silk} carry this out for the rewire to same dynamics but our computer skills do not allow us to replicate their computation. We would try harder if {it was possible to use the method for the asymmetric case.} 

The remainder of the paper is devoted to proofs.
Section \ref{sec:dereq} provides the derivation of equations for the evolution of the ``finite-dimensional distributions'' in the evolving voter model. To derive these equations, we fix a pair of adjacent vertices $x$, and $y$. Then we consider all possible cases of voting and rewiring that can change the states, or the connectivity between those two vertices. Summing over all possible pairs gives us the desired equations. 

From the exact equations, we can obtain the pair approximation, a closed system of equations. In Section \ref{sec:PT1}, using these modified equations we can find the average number of neighbors of a site in state $i$ that are in state 1, $J_i$, and in state 0, $K_i$. These computations make a prediction about the critical value, which simulation shows is incorrect. However, the pair approximation value might be a lower bound on the true critical value.

Section \ref{sec:ODE} and Section \ref{sec:Ctoeq} deal with the analysis of approximate master equations. As we have mentioned, AME transforms the evolving voter model into a system of $N$ particles moving on two planes and jumping between them. In Section \ref{sec:ODE} we analyze the differential equations for each individual plane, and show that they have globally attracting fixed points. Building on this, in Section \ref{sec:Ctoeq} we show that the two plane system has a unique stationary distribution. Here the results of Lawley, Mattingly, and Reed \cite{LMR} are helpful. Although, their set-up does not quite match ours, we can adapt their techniques to make it work in our case. Standard techniques from renewal theory, and ergodic theory helps here.

In Section \ref{sec:derPDE} we derive the differential equations corresponding to the generating functions of $A_{k,\ell(t)}$, and $B_{k,\ell}(t)$. Again, this follows upon a careful consideration of all possible cases of voting, and rewiring. Section \ref{sec:momeq} provides the derivation of the moment equations corresponding to the limiting generating functions $U$ and $V$ in the symmetric case $p=1/2$.

In Section \ref{sec:BSth}, we provide the proofs of extensions of two result of Basu and Sly to thick graphs. Our improvements in their proofs are minor. Our Lemma \ref{Dmax} gives a better control on the maximum degree of the evolving graph after an amount of time $O(NL)$, which helps us to obtain better bound on the threshold for both the theorems (see the statements  of Theorem \ref{NL1i} and Theorem \ref{NL2} in Section \ref{sec:BasuSly}).

\clearp

\section{Results of Basu and Sly}\label{sec:BasuSly}

Recently Basu and Sly \cite{BaSly} have rigorously proved the existence of a phase transition for the dynamics described
above on the dense Erd\H{o}s-R\'{e}nyi graph $G(N,1/2)$. They work in
discrete time with voter events occurring with probability $1-\alpha = \nu/N$. They prove three results. To state them we need some notation.
Let $\tau$ be the first time there are no discordant edges.
Let $N_*(t)$ be the number of vertices holding the minority opinion at time $t$ and for $0 < \ep < 1/2$ let $\tau_*(\ep) = \min\{ t: N_*(t) \le \ep n\}$.

In all three results stated here the system starts from product measure with density 1/2.
In their first result, they use the efficient version of the model in which only discordant edges are chosen at random for updating. 

\begin{theorem} \label{BS1i}
There is a $\nu_0$ so that for all $\nu < \nu_0$ and any $\eta>0$
$$
P( \tau < 10N^2,\ N_*(\tau) \ge \frac{1}{2} - \eta ) \to 1 \quad\hbox{as $N\to\infty$}.
$$
\end{theorem}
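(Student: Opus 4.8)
\noindent
The plan is to follow Basu and Sly: monitor two functionals of the evolving configuration, the number $N_*(t)$ of vertices holding the minority opinion and the number $D(t)$ of discordant edges, and show that for a suitable absolute constant $\nu_0$, when $\nu<\nu_0$: (a) $D$ is drained to $0$ well before step $10N^2$, so that $\tau<10N^2$; and (b) throughout those steps the minority fraction never leaves $[\tfrac12-\eta,\tfrac12]$.

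Part (b) is the soft half. In the efficient dynamics each update is, independently of the current configuration, a voter event with probability $1-\alpha=\nu/N$ and a rewiring event otherwise, so the number $V$ of voter events among the first $\lfloor 10N^2\rfloor$ updates is at most $\mathrm{Bin}(\lfloor 10N^2\rfloor,\nu/N)$ and hence $V\le 20\nu N$ with probability $\to1$. Opinions change only at voter events; since such an event picks an \emph{oriented} discordant edge $(x,y)$ and copies $y$'s opinion onto $x$, the two orientations of any fixed discordant edge move $N_1(t)$ in opposite directions, so conditionally on a voter event occurring $N_1$ moves by $\pm1$ with probability $\tfrac12$ each. Thus $N_1$ is a martingale whose increments vanish off the voter events. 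Conditioning on the i.i.d.\ update-type sequence makes $N_1$, sampled at the $V$ voter events, a simple random walk of length $V$; a maximal inequality together with $|N_1(0)-N/2|=O(\sqrt N)$ whp then gives $P\bigl(\sup_{t\le 10N^2}|N_1(t)-N/2|>\eta N\bigr)\le 2\exp(-c\eta^2N/\nu)+o(1)\to0$. On the complementary event $N_*(t)=\min(N_1(t),N-N_1(t))\ge N/2-\eta N$ for every $t\le 10N^2$.

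Part (a) rests on the drift of $D$. A rewiring event deletes the chosen discordant edge; under rewire-to-same the new edge is concordant, so $D$ drops by exactly $1$, while under rewire-to-random the new edge is discordant with conditional probability at most the current fraction of opposite-opinion vertices, which by part (b) is $\tfrac12+o(1)$ throughout, so $D$ then drops by at least $\tfrac12$ in conditional mean and, by a Binomial concentration bound, the total decrease over $R$ rewiring events is at least $R/2-o(N^2)$ whp. A voter event changes the discordancy of only the edges at the flipped vertex, hence moves $D$ by at most $\Delta^*$, the largest degree attained up to step $10N^2$. Suppose $\tau>10N^2$; then an update occurs at each of the first $10N^2$ steps, so $R+V=10N^2$ exactly, hence $R\ge 10N^2-20\nu N$ whp, while $D(0)=(\tfrac18+o(1))N^2$ whp since each of the $\sim N^2/4$ edges of $G(N,1/2)$ is discordant with probability $\tfrac12$, and by Lemma~\ref{Dmax} we have $\Delta^*\le C_1N$ whp. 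Combining,
\[
1\ \le\ D(10N^2)\ \le\ \tfrac18 N^2-\tfrac12 R+\Delta^* V+o(N^2)\ \le\ \Bigl(\tfrac18-5+20C_1\nu\Bigr)N^2+o(N^2),
\]
which is negative once $\nu<\nu_0:=1/(5C_1)$ and $N$ is large, a contradiction; so $\tau<10N^2$ whp, and with part (b) applied at $t=\tau$ this proves the theorem. The thick-graph statement Theorem~\ref{NL1i} follows by the same argument with $N/2$ replaced by the average degree $L$, Lemma~\ref{Dmax} again furnishing the uniform degree bound.

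The one genuinely fiddly point is part (a) under rewire-to-random, where a rewiring can itself regenerate a discordant edge: one must quantify that this regeneration rate sits a fixed factor below the depletion rate, which uses the density control of part (b) plus a concentration bound on the conditionally independent opinions of the rewiring targets; one must also absorb the degenerate rejection events (self-loops and repeated edges), which is harmless since, once $\Delta^*=O(N)$, rejection slows the depletion of $D$ by at most a bounded factor. Establishing $\Delta^*=O(N)$ uniformly up to step $10N^2$ is the work of Lemma~\ref{Dmax}, and the sharper degree bound there is the source of the quantitative improvement over Basu and Sly noted in the introduction.
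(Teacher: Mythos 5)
Your argument is correct and follows essentially the same route as the paper's: Theorem \ref{BS1i} itself is quoted from Basu and Sly without proof, but the paper's proof of the thick-graph analogue, Theorem \ref{NL1i} in Section \ref{sec:BSth}, rests on exactly your ingredients --- the discordant-edge count drops by $1/2$ per rewiring on average, the few voter events (whence also the density control via the $\pm 1$ martingale) can each raise it by at most the maximum degree, and Lemma \ref{Dmax} supplies the degree bound. The only cosmetic differences are that the paper runs an exponential supermartingale $\exp(\lambda X_m/L)$ where you use an additive decomposition with Azuma-type concentration, and that it gets the regeneration probability of exactly $1/2$ from the orientation coin flips $Z_i$ rather than from the density control.
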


\noindent
The number of edges $M \sim N(N-1)/2$.
Separation requires $O(N^2)$ updates rather than $O(M \log M)$ because the efficient algorithm always picks discordant edges while the one with random choices takes a long time to find the last few remaining discordant edges. The second result says that the density of voters with opinion 1 does not change much from its initial value of 1/2. Since the number of voters with opinion 1 is a martingale this follows easily once rapid disconnection is established. 

They prove their second and third results for the discrete time algorithm in which edges are chosen at random.
The next theorem is the main result of their paper and has a very long and difficult proof. 

\begin{theorem}\label{BS1ii}
Let $\ep' \in(0,1/2)$ be given. There is a $\nu_*(\ep')$ so that for $\nu > \nu_*(\ep')$
we have $\tau_*(\ep') \le \tau$ with high probability and
$$
\lim_{c\downarrow 0} \liminf_{N\to\infty} P( \tau > cN^3 ) = 1.
$$
\end{theorem}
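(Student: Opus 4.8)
The plan is to reduce the statement to two nearly independent ingredients: (a) a martingale estimate for the density $\theta_t := N_1(t)/N$ of vertices in state $1$, which is routine; and (b) an a priori estimate showing that the number $D_t$ of discordant edges stays of order $N^2$ for as long as $\theta_t$ stays bounded away from $0$ and $1$. Ingredient (b) is the crux and is essentially the content of the long argument in \cite{BaSly}.

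\emph{The martingale ingredient.} Voter events and both kinds of rewiring only copy opinions, never change them outright, so $\theta_t$ is a bounded martingale (and, since rewiring preserves the edge count, $M_t\equiv M_0\asymp N^2$). At a given step $\theta_t$ changes by $\pm 1/N$ exactly when the chosen oriented edge is discordant (conditional probability of order $D_t/M_0$) and the event is a voter event (further probability $\nu/N$), so
$$ E\!\left[(\theta_{t+1}-\theta_t)^2\mid\mathcal F_t\right]\;\asymp\;\frac{\nu}{N^3}\cdot\frac{D_t}{M_0},\qquad\text{so in particular}\qquad 0\le E\!\left[(\theta_{t+1}-\theta_t)^2\mid\mathcal F_t\right]\le \frac{\nu}{N^3}. $$
The upper bound and Doob's $L^2$ inequality give $E\big[\max_{s\le cN^3}(\theta_s-\tfrac12)^2\big]\le 4\nu c$, so for any $\eta\in(0,1/2)$ the probability that $\theta_s\in(\eta,1-\eta)$ throughout $[0,cN^3]$ is at least $1-4\nu c/(1/2-\eta)^2\to1$ as $c\downarrow0$. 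Conversely, on the event that $D_s\ge\delta N^2$ holds up to the first time $\sigma$ that $\theta$ leaves $(\eta,1-\eta)$, the predictable quadratic variation of $\theta$ grows at rate $\asymp\delta\nu/N^3$ per step until $\sigma$, so optional stopping applied to $\theta_t^2-\langle\theta\rangle_t$ bounds $E[\sigma]$ by $C(\eta,\nu,\delta)N^3$ (using that the process terminates a.s.\ on a finite graph, whence $\sigma<\infty$ a.s.), and therefore $P(\sigma\le C'N^3)\to1$ for $C'$ large. Now fix $\eta=\ep'/2$. Since $\tau$ is the first time $D_t=0$, the displayed lower tail shows $\tau>cN^3$ on the high-probability event that $\theta$ stays in $(\eta,1-\eta)$ up to $cN^3$ and $D_t$ stays $\ge\delta N^2$ there, which is exactly $\lim_{c\downarrow0}\liminf_N P(\tau>cN^3)=1$; and at time $\sigma$ the minority fraction is at most $\eta<\ep'$ while $D_\sigma>0$, so $\tau_*(\ep')\le\sigma<\tau$, giving the first assertion. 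Both conclusions thus follow once $D_t$ is controlled from below.

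\emph{The a priori ingredient (the hard part).} It remains to show: there is $\nu_*(\ep')$ such that for $\nu>\nu_*(\ep')$ there are $0<\delta<\Delta$ with $P\big(\delta N^2\le D_t\le\Delta N^2\ \text{for all}\ t\le\sigma\wedge C'N^3\big)\to1$, where $\sigma$ is the first exit of $\theta_t$ from $(\ep'/2,1-\ep'/2)$. The upper bound is easy ($D_t\le M_t\asymp N^2$, and one keeps degrees under control by a Lemma~\ref{Dmax}-type estimate). The lower bound is the real difficulty: rewirings destroy discordant edges at rate $\asymp D_t/M_0$ per step, voter events both create and destroy them, and one must exhibit a net restoring drift pushing $D_t$ back toward its quasi-equilibrium value $f(\theta_t)N^2$ (the ``arch'') whenever it dips below. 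The natural route is a renewal/bootstrap over short windows: split $[0,C'N^3]$ into $\asymp N$ windows of length $\asymp N^2$; on each window $\theta$ moves by $o(1)$, and one shows — using Azuma--McDiarmid concentration for the i.i.d.\ sequence of edge-updates together with control of the few higher local statistics (the ratios $N_{101}/N_{10}$, $N_{010}/N_{01}$, $\dots$ that enter the approximate master equation of Section~\ref{sec:AME}) — that $D_t$ relaxes to within $o(N^2)$ of $f(\theta_t)N^2$ and stays there for the rest of the window, then feeds the end-of-window configuration into the next window and argues the $o(N^2)$ errors do not compound over the $\asymp N$ windows. Taking $\nu$ large enters precisely here: as $\nu\to\infty$ the local dynamics approach those of the pure voter model on $G(N,1/2)$, whose quasi-stationary state is the product measure of density $\theta$ with $D_t\approx 2\theta(1-\theta)M_0>0$ on all of $(0,1)$, so for $\nu$ large the fixed point $f$ stays bounded below on $(\ep'/2,1-\ep'/2)$ — this fixes $\nu_*(\ep')$.

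\emph{Main obstacle.} The entire difficulty is the a priori lower bound on $D_t$: controlling the full local (neighborhood) structure of the evolving graph uniformly over a horizon of order $N^3$, which is far longer than the $\Theta(N^2)$ local relaxation time, so one must prevent fluctuation and closure errors from accumulating across $\Theta(N)$ successive relaxation windows. This is what makes Basu and Sly's proof long and delicate; a self-contained argument must either carry out such a quantitative multi-scale bootstrap or find a monotonicity/coupling shortcut that bypasses tracking the local statistics directly.
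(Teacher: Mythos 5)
First, a point of comparison: the paper does not actually prove this statement. Theorem \ref{BS1ii} is quoted from Basu and Sly \cite{BaSly}, and the surrounding text explicitly defers to their ``very long and difficult proof.'' The paper only extends Theorems \ref{BS1i} and \ref{BS2} to thick graphs (as Theorems \ref{NL1i} and \ref{NL2}, proved in Section \ref{sec:BSth}); it offers no argument for Theorem \ref{BS1ii}. So your attempt can only be judged on its own terms.

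On those terms there is a genuine gap. Your reduction is sound and correctly structured: ingredient (a), the martingale analysis of $\theta_t=N_1(t)/N$ (per-step quadratic variation at most of order $\nu/N^3$, Doob's inequality to keep $\theta$ near $1/2$ for $cN^3$ steps, and optional stopping on $\theta_t^2-\langle\theta\rangle_t$ to force $\theta$ out of $(\ep'/2,1-\ep'/2)$ within $O(N^3)$ steps when $D_t\ge\delta N^2$), is routine, correct, and does deliver both conclusions of the theorem \emph{conditionally} on ingredient (b). But ingredient (b) --- the uniform lower bound $D_t\ge\delta N^2$ over a horizon of order $N^3$ once $\nu$ is large --- is the entire content of the theorem, and you do not prove it. You name a strategy (windows of length $\asymp N^2$, Azuma--McDiarmid, tracking the ratios $N_{101}/N_{10}$, $N_{010}/N_{01}$, \ldots) but you never establish the restoring drift toward the arch $f(\theta_t)N^2$, never close the moment hierarchy (the drift of $D_t$ involves three-site counts, whose drifts involve four-site counts, and so on --- exactly the closure problem described in Section \ref{sec:EEPA}), and never show that the $o(N^2)$ window errors fail to compound over the $\Theta(N)$ successive windows. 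You candidly flag this in your final paragraph. As written, the proposal is an accurate road map that correctly isolates where the difficulty lives, not a proof; the missing step is precisely what occupies the bulk of \cite{BaSly}.
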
 

\noindent
If each edge were chosen at rate 1, then $O(N^3)$ updates translates into time $O(N)$ and is consistent with the results
stated in the previous section.

Theorem \ref{BS1ii} gives a lower bound on the disconnection time and shows that if $\nu$ is large then before disconnection 
occurs the minority fraction has been $\le \ep'$. The next result shows that in the rewire-to-random case for fixed $\nu$ there is a lower bound on the minority fraction when fixation occurs. This is 
consistent with the simulation for sparse graphs shown in Figure \ref{fig:rtr},
but is believed to be false for rewire to same on sparse graphs, see Figure \ref{fig:rts}.

\begin{theorem} \label{BS2}
Let $\nu >0$ be fixed. For the rewire-to-random model, there is an $\ep_*(\nu)$ so that $\tau < \tau_*(\ep_*)$ with high probability.
\end{theorem}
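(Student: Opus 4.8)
The plan is to reduce the statement to a bound on how many times vertices change opinion, and then to invoke martingale concentration. Rewiring never changes an opinion and a voting event flips exactly one vertex, so $t\mapsto N_1(t)$, the number of state-$1$ vertices, changes only at voting events, and then by $\pm1$; it is moreover a martingale, since among the $2N_{10}$ ordered discordant edges exactly $N_{10}$ point out of a state-$1$ vertex and $N_{10}$ out of a state-$0$ one, so a voting update is equally likely to raise or to lower $N_1$. By the $0\leftrightarrow1$ symmetry of the dynamics and the fact that we start from density $1/2$, it suffices to show that with high probability $N_1(t)>\ep_* n$ for all $t\le\tau$: then $N_*(\tau)=\min(N_1(\tau),n-N_1(\tau))\ge\ep_* n$, i.e.\ $\tau<\tau_*(\ep_*)$. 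Let $V$ be the number of voting events before $\tau$ and let $N_1(T_1),N_1(T_2),\dots$ be the successive values of $N_1$ at those events; this is a martingale with increments exactly $\pm1$, and $\min_{t\le\tau}N_1(t)=\min_{k\le V}N_1(T_k)$, so Azuma's inequality gives, on the event $\{V\le\bar V\}$,
\[
P\Big(\min_{t\le\tau}N_1(t)\le\tfrac12 n-\lambda\Big)\le\exp\!\Big(-\frac{\lambda^2}{2\bar V}\Big),\qquad\lambda=(\tfrac12-\ep_*)n .
\]
Hence the theorem follows as soon as we exhibit $\bar V=o(n^2)$ with $P(V>\bar V)\to0$; and any such bound in fact gives the conclusion for \emph{every} $\ep_*<1/2$, so that $N_*(\tau)\ge(1/2-o(1))n$ with high probability.

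Proving $V=o(n^2)$ is where the rewire-\emph{to-random} rule and Lemma~\ref{Dmax} enter, through the life cycle of a discordant edge. (i) A discordant edge is destroyed the first time either of its orientations is selected — a voting update turns it concordant, a rewiring update deletes it — so it is ``processed'' at most once. (ii) New discordant edges appear only at voting or rewiring updates; a vertex of degree $d$ that flips creates at most $d$ of them, and by Lemma~\ref{Dmax} every degree met over the relevant horizon is at most $CL$ with high probability. (iii) A given processing is a voting update with probability exactly $1-\alpha=\nu/L$, so a Chernoff bound pins $V$ within a constant factor of $(\nu/L)$ times the number of processings. Writing $P$ for that number and $C_{\mathrm{tot}}$ for the number of discordant edges ever created, $P\le C_{\mathrm{tot}}$ and $C_{\mathrm{tot}}\le N_{10}(0)+CL\cdot V+R$, where $R$ counts the discordant edges born at rewiring updates; and it is here that rewire-to-random is used, because a rewiring of a discordant edge at a current-majority vertex produces a new discordant edge only when the randomly chosen new endpoint lies in the minority, so $R$ is of lower order. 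Combining these, $V\lesssim(\nu/L)\bigl(N_{10}(0)+CL\cdot V\bigr)$, a self-bounding inequality which, for $\nu$ below an explicit threshold (one that our sharper Lemma~\ref{Dmax} enlarges by forcing a smaller $C$), closes to give $V=O_\nu\bigl(N_{10}(0)/L\bigr)=O_\nu(n)$, hence the theorem with $\ep_*(\nu)$ as close to $1/2$ as desired.

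The main obstacle is the regime of large $\nu$, where the self-bounding inequality no longer closes and the minority fraction genuinely falls (cf.\ Theorem~\ref{BS1ii}); there one must show only that it cannot be pushed below the \emph{positive} floor $\ep_*(\nu)$ before disconnection — the rigorous counterpart of the ``arch'' endpoints in Figures~\ref{fig:2500vs50} and~\ref{fig:2500vs50b}. Following Basu and Sly, the plan is instead to control the decay of $N_{10}(t)$ itself: the same life-cycle bookkeeping, together with the conservation of the number $M$ of edges, is used to derive a drift estimate for $N_{10}$ that keeps $\mathbb{E}\!\int_0^{\tau}N_{10}(t)\,dt$, and hence $\mathbb{E}\,V\approx(\nu/(ML))\,\mathbb{E}\!\int_0^{\tau}N_{10}(t)\,dt$, of size $o(n^2)$ — with Lemma~\ref{Dmax} invoked precisely to hold every degree-weighted error term below that order uniformly over the (long) time horizon, the delicate step at which our argument stays closest to \cite{BaSly}. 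We remark that the mechanism is special to rewire-to-random: in rewire-to-same a rewiring of a discordant edge always yields a concordant one, the life-cycle accounting above breaks down, and no positive floor is expected, consistent with Figure~\ref{fig:rts}.
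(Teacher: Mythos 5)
Your reduction to a bound on the number $V$ of voting events cannot establish this theorem for all fixed $\nu>0$, and the idea that actually drives the proof is missing. Theorem \ref{BS2} does not assert that the density stays near $1/2$; it asserts that the minority fraction never reaches some (possibly tiny) positive level $\ep_*(\nu)$ before disconnection. Indeed, by Theorem \ref{BS1ii}, for any $\ep'\in(0,1/2)$ and $\nu>\nu_*(\ep')$ the minority fraction \emph{does} drop to $\ep' n$ before $\tau$ with high probability; combined with your own Azuma estimate this forces $V$ to be of order $n^2$ with high probability, so the target $V=o(n^2)$ is false for large $\nu$, and your self-bounding inequality $V\lesssim(\nu/L)\bigl(N_{10}(0)+CL\cdot V\bigr)$ closes only for $\nu$ small --- which is the regime of Theorems \ref{BS1i} and \ref{NL1i}, not of Theorem \ref{BS2}. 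Your final paragraph concedes that the minority fraction genuinely falls for large $\nu$, yet still proposes to show $\mathbb{E}V=o(n^2)$; these two statements are inconsistent, and nothing in the proposal produces the positive floor $\ep_*(\nu)$.

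The missing ingredient is the uniform-in-initial-condition statement of Basu and Sly's Theorem 6.1 (our Theorem \ref{NL2}): for \emph{every} labeled graph $G(0)\in{\cal G}(p,N,L)$ with $p\le p(\nu)$, one has $\tau<7NL$ with high probability while the density moves by less than $\ep$. Because the hypothesis involves nothing but the edge count and the density, it applies to whatever configuration the process occupies at the first moment the minority fraction hits $p(\nu)$, and therefore yields disconnection before the fraction can fall to $\ep_*=p(\nu)-\ep$; that deduction is the entire proof of Theorem \ref{BS2}. The proof of the uniform statement is not a drift estimate for $N_{10}$ but the stubborn-counter construction: split the vertex set into $S$ (initial degree $\le 11L$) and its complement $T$, observe that a $0\to 1$ flip at a vertex of $S$ whose geometric counter exceeds $20L$ cannot flip back within $7NL$ updates (Lemmas \ref{Dmax} and \ref{Yest}), so more than $1.1pN$ such flips would contradict the density conservation of Lemma \ref{densest}; this caps the number of relabelings (Lemma \ref{RLSS}) and hence, via Lemmas \ref{RSS}--\ref{RTT}, the total number of discordant-edge selections by $6.9NL$, forcing $\tau<7NL$. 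None of this machinery --- in particular, no mechanism that activates only once the minority is already small --- appears in your argument.
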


\subsection{Results for thick graphs}

The evolving voter model on a dense Erd\H{o}s-R\'enyi random graph $G(N,1/2)$ is ugly because it will quickly develop self-loops and parallel edges.
To avoid this problem, while retaining the simplifications that come from having vertices of large degree, we will consider Erd\H{o}s-R\'enyi random
graphs in which the mean degree is $L$ with $L=N^a$ and $0 < a < 1$. This regime is intermediate between dense graphs with
$L=O(N)$ and sparse graphs with $L=O(1)$, so we call them {\it thick graphs}.
Since a Poisson distribution with mean $L$ has
standard deviation $\sqrt{L}$ there is little loss of generality in supposing that we start with a random graph in which each vertex has 
degree $L$. To do this, we have to assume $LN$ is even.

Following Basu and Sly, voting occurs on each oriented edge $(x,y)$ at rate $\nu/L$, i.e., $x$ imitates $y$;
while at rate 1, $x$ severs its connection to $y$ and connects to a randomly chosen vertex $z$ that is not already
one of its neighbors. We can drop the $-\nu/L$ from the rewiring rate since $\nu/L\to 0$, but in this
section we will retain it to have a closer connection with \cite{BaSly}. 

Theorems \ref{BS1i} and \ref{BS2} generalize in a straightforward way to the new model. 
Here, and throughout the paper, we will consider only the rewire-to-random version
and let $p$ be the initial fraction of vertices in state 1.
In the next two results, we consider discrete time and use the efficient algorithm in
which at each step a discordant edge is selected for updating. Theorem \ref{BS1i} becomes

\begin{theorem} \label{NL1i}
Suppose $p \le 1/2$ and let $\ep>0$. If $\nu \le (0.15)/(1+3p)$ then with high probability $\tau < 3pNL$,
and at time $\tau$ the fraction of 1's is between $p-\ep$ and $p+\ep$ with high probability.
\end{theorem}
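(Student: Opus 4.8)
\mn
The plan is to adapt the argument Basu and Sly \cite{BaSly} use for Theorem \ref{BS1i}, tracking two functionals of the configuration: $D(t):=N_{10}(t)$, the number of discordant edges, and $N_1(t)$, the number of vertices in state $1$. There are two things to establish: (i) $D$ reaches $0$ before step $3pNL$ with high probability; and (ii) on that event, $|N_1(t)-pN|\le\ep N$ throughout, so the final fraction of $1$'s lies in $[p-\ep,p+\ep]$.

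\ms
I would dispatch (ii) first. A rewiring step changes no opinions, and a voting step flips the active endpoint $x$ of an oriented discordant edge $(x,y)$; since exactly half of all oriented discordant edges have their active endpoint in state $1$, such a step changes $N_1$ by $+1$ or $-1$ with equal conditional probability. Hence $N_1(t\wedge\tau)$ is a martingale with increments in $\{-1,0,1\}$, all of whose nonzero increments occur at voting steps. In the efficient algorithm each step is a voting step with probability $\nu/L$, so by a Chernoff bound the number of voting steps among the first $3pNL$ steps is at most $(1+o(1))\,3p\nu N=O(N)$ with high probability; an Azuma estimate for that many $\pm1$ increments then gives $\max_{t\le\min(\tau,3pNL)}|N_1(t)-pN|\le\ep N$ with high probability. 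Thus (ii) reduces to (i).

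\ms
For (i) I would estimate the drift of $D$. A rewiring step removes an oriented discordant edge $\{x,y\}$, so $D$ drops by $1$, and inserts $\{x,z\}$ with $z$ essentially uniform over the vertex set (the $O(L/N)=o(1)$ correction for ``non-neighbour'' contributes only $o(NL)$ in total); averaging over the two equally likely orientations, the inserted edge is discordant with probability $\tfrac12(1-\rho)+\tfrac12\rho=\tfrac12$, \emph{regardless of the current density} $\rho$. So at a rewiring step $\Delta D\in\{-1,0\}$ with conditional mean $-\tfrac12$. A voting step flips a vertex $x$ having $k$ neighbours in state $1$ and $\ell$ in state $0$, changing $D$ by $\pm(k-\ell)$, hence $|\Delta D|\le\deg(x)\le D_{\max}(t)$. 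Here I would invoke Lemma \ref{Dmax}: up to step $3pNL$ each vertex is selected as a rewiring target at most $(1+o(1))\,3pL$ times, whence $D_{\max}(t)\le(1+3p+o(1))L$ for all $t\le 3pNL$ with high probability. Writing $V,R$ for the numbers of voting and rewiring steps among the first $T:=3pNL$ steps (so $R=T-V$ and, with high probability, $V\le(1+o(1))\,3p\nu N=o(NL)$, hence $R\ge T-o(NL)$), and using that the rewiring decrements concentrate around their conditional means $\tfrac12+o(1)$ by Azuma, one obtains that with high probability, on the event $\{\tau>T\}$,
$$
0<D(T)\ \le\ D(0)+V\cdot D_{\max}-\tfrac R2+o(NL)\ \le\ \left[\,1+3\nu(1+3p)-\tfrac32\,\right]pNL+o(NL),
$$
where $D(0)\le p(1-p)NL(1+o(1))\le pNL(1+o(1))$. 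The bracket is negative exactly when $\nu(1+3p)<\tfrac16$, which holds under the hypothesis $\nu\le 0.15/(1+3p)$ since $0.15<\tfrac16$; this contradicts $D(T)>0$, so $\tau\le 3pNL$ with high probability. Combined with (ii), this proves the theorem.

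\ms
I expect the main obstacle to be the sharp maximum-degree estimate of Lemma \ref{Dmax}: a crude bound $D_{\max}\le cL$ with a fixed constant $c$ does not yield the stated threshold when $p$ is near $1/2$, so the factor $1+3p$ there is essential. Proving it should require a short bootstrap --- first a rough a priori bound (say $D_{\max}\le 3L$ up to step $3pNL$) to justify that a rewiring target hits a given vertex with probability $(1+o(1))/N$, then a Chernoff bound per vertex and a union bound over the $N$ vertices, which is affordable because $L=N^a\gg\log N$. A secondary point is bookkeeping: the bounds on $D_{\max}$, on $V$, and on the rewiring-decrement fluctuations all hold only with high probability and must be intersected; note that, since the estimate for $D(T)$ is used only to contradict $\{\tau>T\}$, there is no need to control the monotonicity of $D$ or its large ($\Theta(L)$) upward jumps near termination.
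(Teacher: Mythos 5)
Your proposal is correct and follows essentially the same route as the paper: both track the number of discordant edges, use the exact $-\tfrac12$ expected decrement at rewiring steps, bound voting jumps crudely by $D_{\max}$ via Lemma \ref{Dmax} (with the crucial $(1+3p+o(1))L$ bound up to step $3pNL$), handle the density via the $\pm 1$ voting martingale, and arrive at the identical threshold computation $\nu(1+3p)<1/6$. The only difference is cosmetic: the paper runs the drift estimate through an exponential supermartingale $\exp(\lambda X_m/L)$ rather than your first-moment-plus-Azuma bookkeeping.
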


\noindent
Note that the bound does not depend upon the average degree $L$. When $p=1/2$ the bound is 0.06. 

\mn
{\bf Sketch of the proof.}
Here we have followed the proof in \cite{BaSly} with some improvements in the arithmetic. Let $X_m$ be the number of discordant edges after $m$ updates. Independent of the current frequency of sites in state 1, every time a rewiring event occurs $X_m$ decreases by 1 with probability $1/2$ and stays the same with probability 1/2. To handle voting events, we take the drastic approach that they can at most increase $X_m$ by
$D_{max}(m)$, the maximum degree of vertices in the graph, and $D_{max}(tNL) \le (1+\ep + t)L$, {the second bound resulting
from estimating the number of times a vertex is chosen to receive a rewiring, ignoring the fact that vertices will lose neighbors due to rewirings}.
\eopt 

\ms
Our next result is Theorem 6.1 in \cite{BaSly}, from which Theorem \ref{BS2} stated above follows easily. Let 
${\cal G}(p,N,L) =$ graphs with vertex set $V = \{ 1, 2,, \ldots, N \}$ labeled with 1's and 0's so that 
the number of vertices in state 1, $N_1(G)=pn$, and the number of edges is $NL/2$.

\begin{theorem} \label{NL2}
Let $\nu>0$ and $\ep>0$
There is a $p(\nu) < 1/2$ so that for all $G(0) \in {\cal G}(p,N,L)$ with $p \le p(\nu)$, we have with high probability
$\tau < 7NL$  and the fraction of vertices in state 1 at time $\tau$ is $\in (p-\ep,p+\ep)$.
\end{theorem}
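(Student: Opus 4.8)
\mn
\textbf{Proof plan.}
The plan is to run the argument behind Theorem 6.1 of \cite{BaSly}, with our Lemma \ref{Dmax} replacing their control of the maximum degree: Lemma \ref{Dmax} supplies, with high probability, $D_{max}(tNL)\le(1+\ep+t)L$ uniformly for $t$ in a bounded range, hence $D_{max}(m)\le(8+\ep)L$ for all $m\le 7NL$, and it is this $L$-independent bound that will yield the threshold $p(\nu)$. Throughout we run the efficient algorithm: at each update a discordant oriented edge $(x,y)$ is chosen uniformly at random, and conditionally on it a voting step ($x$ imitates $y$) is performed with probability $\tfrac{\nu/L}{1+\nu/L}$ and a rewiring step otherwise; we write $n_1(v)$ and $n_0(v)$ for the numbers of state-$1$ and state-$0$ neighbours of a vertex $v$.

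First I would handle the statement about the density of $1$'s. Let $N_1(m)$ be the number of vertices in state $1$ after $m$ updates. On a voting step the two orientations of the selected edge are equally likely and exactly one of them raises $N_1$ by $1$ while the other lowers it by $1$, so $(N_1(m))$ is a martingale whose only nonzero increments occur on the $V_m$ voting steps, and $V_m$ is stochastically dominated by a $\mathrm{Binomial}(m,\nu/L)$ variable, of mean $\le 7N\nu$ when $m=7NL$. A standard martingale concentration inequality then gives $\max_{m\le 7NL}|N_1(m)-pN|\le\ep N$ with high probability. Hence it will be enough to prove that $\tau<7NL$ with high probability, for then the fraction of $1$'s at time $\tau$ lies in $(p-\ep,p+\ep)$ with high probability as well.

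To prove $\tau<7NL$ I would track $X_m$, the number of discordant edges after $m$ updates, so that $\tau=\min\{m:X_m=0\}$ and $X_0\le NL/2$ since $G(0)$ has $NL/2$ edges. On a rewiring step at a discordant edge $\{u,v\}$, deleting that edge and attaching a uniformly chosen non-neighbour of the acting endpoint makes $X_m$ decrease by $1$ with conditional probability at least $\tfrac12-O(L/N)$ and leaves it unchanged otherwise (here one uses $D_{max}=o(N)$). On a voting step at $\{u,v\}$ with $u$ in state $1$ and $v$ in state $0$, $X_m$ changes by $n_0(v)-n_1(v)\le D_{max}$ if the state-$0$ endpoint acts and by $n_1(u)-n_0(u)$ — usually a large decrease — if the state-$1$ endpoint acts, the two being equally likely. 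Summing increments over the $m=7NL$ updates, and using that the number of rewiring steps satisfies $R_m\ge 7NL-8N\nu=(7-o(1))NL$ with high probability,
\[
X_{7NL}\ \le\ X_0\ -\ (\tfrac12-o(1))\,R_{7NL}\ +\ \Delta_{\mathrm{vote}}\ \le\ (\tfrac12-3.5+o(1))\,NL\ +\ \Delta_{\mathrm{vote}},
\]
where $\Delta_{\mathrm{vote}}$ is the net contribution of all voting steps. It therefore suffices to show $\Delta_{\mathrm{vote}}<3NL$ with high probability, since that forces $X_m$ to reach $0$ before $m=7NL$.

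The main obstacle is this bound on $\Delta_{\mathrm{vote}}$. A single voting step can create $\Theta(L)$ discordant edges, so the trivial estimate $\Delta_{\mathrm{vote}}\le V_{7NL}D_{max}\approx 56N\nu L$ is useless once $\nu$ is not small, and one must instead use — as \cite{BaSly} do — that the voting steps creating discordant edges (a $0$ becoming a $1$) and those destroying them (a $1$ becoming a $0$) are almost equinumerous: their difference is $N_1(7NL)-N_1(0)$, which is at most $\ep N$ in absolute value on the event above, while the degrees of all the flipping vertices are at most $D_{max}\le(8+\ep)L$. Feeding these two inputs into the bookkeeping of \cite{BaSly} bounds $\Delta_{\mathrm{vote}}$ by $C(\nu)(\ep+p)\,NL$ for a finite constant $C(\nu)$ depending only on $\nu$; choosing $\ep$ small and then $p(\nu)\in(0,1/2)$ small enough that $C(\nu)(\ep+p(\nu))<3$ completes the proof, uniformly over all $G(0)\in{\cal G}(p,N,L)$ with $p\le p(\nu)$.
\eopt
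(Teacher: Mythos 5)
The central step of your argument --- the bound $\Delta_{\mathrm{vote}}\le C(\nu)(\ep+p)NL$ --- is asserted rather than proved, and the cancellation heuristic you offer for it does not work. It is true that the numbers of $0\to 1$ flips and $1\to 0$ flips differ by $N_1(7NL)-N_1(0)=O(\ep N)$, but $\Delta_{\mathrm{vote}}$ is not a function of these counts: each $0\to1$ flip at $v$ contributes $n_0(v)-n_1(v)$ and each $1\to0$ flip at $w$ contributes $n_1(w)-n_0(w)$, evaluated at different vertices and different times, so the two sums need not cancel even approximately. When $p$ is small a $0\to1$ flip typically contributes close to $+\deg(v)$ (most neighbours of $v$ are $0$'s), while a $1\to0$ flip contributes $n_1(w)-n_0(w)$, which can be anywhere in $[-D_{max},D_{max}]$; nothing pairs these up. With roughly $7\nu N$ voting steps each capable of adding up to $D_{max}=O(L)$ discordant edges, the a priori size of $\Delta_{\mathrm{vote}}$ is $O(\nu NL)$, which swamps the $3.5NL$ rewiring decrease as soon as $\nu$ is of order one --- and Theorem \ref{NL2} must hold for \emph{every} fixed $\nu>0$. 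This is precisely why the drift-on-$X_m$ argument is confined to Theorem \ref{NL1i}, where $\nu\le 0.15/(1+3p)$ is assumed. Your appeal to ``the bookkeeping of \cite{BaSly}'' does not fill the gap, because their Theorem 6.1 (and the proof in Section \ref{sec:BSth}) does not control $\Delta_{\mathrm{vote}}$ at all.

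The actual proof takes a different route that you would need to adopt. One builds the counter construction with i.i.d.~geometric($\nu/L$) variables $X_i$, splits $V$ into $S=\{v:\deg_{G(0)}(v)\le 11L\}$ and $T=V\setminus S$ (so $|S|\ge 0.9N$ by edge counting), and calls a counter \emph{stubborn} if $X_i>20L$. Lemma \ref{Dmax} guarantees that a vertex of $S$ receiving a stubborn counter upon flipping to $1$ cannot flip back within $7NL$ updates, so the number of stubborn counters used is a lower bound on $N_1(7NL)$; combined with Lemma \ref{densest} this caps the number $RL^+_{SS}$ of $0\to1$ relabelings on $SS$ edges at roughly $\nu N/50$ once $1.2p<(\nu/50)e^{-26\nu}$, whence $RL_{SS}\le \nu N/20$ and $R_{SS}\le NL/10$ by a binomial deviation bound. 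The counts $R_{ST}\le 2.8NL$ and $R_{TT}\le 4NL$ then follow from tracking how rewirings convert $TT$ and $ST$ edges into $ST$ and $SS$ edges. Since $R_{SS}+R_{ST}+R_{TT}\le 6.9NL$, the efficient algorithm exhausts all discordant edges within $7NL$ updates. Note also that this is where the exponentially small threshold $p(\nu)\sim (\nu/60)e^{-21\nu}$ comes from; your scheme gives no indication of how $p(\nu)$ should depend on $\nu$. Your treatment of the density statement (martingale plus a binomial bound on the number of voting steps) is fine and matches Lemma \ref{densest}.
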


\noindent
Since this result assumes nothing about the graph except for the number of edges, it follows that if the density of 1's 
gets to $p(\nu)$ then rapid disconnection will occur. The proof will show that we can take $p(\nu) = (\nu/60) e^{-21\nu}$,
which again is independent of $L$. When $\nu = 1$, $p(\nu) = 1.26 \times 10^{-11}$. Even though the value is tiny the form of the bound allows us to conclude that for $p < p_0$ the threshold for prolonged persistence $\nu_c(p) \ge (1/21) \log(1/p) \to \infty$ as $p \to 0$.

\mn
{\bf Sketch of the proof.} The proof is clever but again uses arguments that are extremely crude. One uses a special construction in which 
counters $K(v,m)$ determine if an event on an oriented edge $(v,u)$ at update $m$ will be a voting
(the counter is 0) or a rewiring. The $K(v,m)$ are initialized to be independent geometrics and 1 is subtracted each time
the vertex is used. 

To study the dynamics, we divide the graph into the set of vertices $S$ with initial degree $\le 11L$ and $T = V-S$. The key observation is that if a 0 in $S$ is changed to a 1 by voting and the counter for the site is assigned a geometric that is $\ge 20L$ (called a stubborn choice) then with high probability it will not change back to 0 before $7NL$ updates have been done. Thus if we can show that there are at least $1.1pN$ vertices in $S$ that flip to 1 by time $7NL$ and are associated with stubborn choices we will contradict Lemma \ref{densest} below, which shows that with high probability the fraction of vertices in state 1 will be $\in(p-\ep,p+\ep)$ up to that time. The last result holds because only voter events change the number of 1's and we expect $7\nu N$ of them by time $7NL$. \eopt 

\medskip
The last two results are proved in Section \ref{sec:BSth}. Based on the fact that the bounds do not depend on $L$,
and on our analysis of approximate models below, which remove the dependence on $L$ by letting $L\to\infty$,
we conjecture that the critical value $\nu_c$ does not depend upon $L$, or to be precise

\begin{conj}
If we let $L=N^a$ and $N\to\infty$ then the limiting critical value does not depend upon $a$.
\end{conj}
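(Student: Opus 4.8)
Since this is a conjecture I can only sketch a program. I would aim to prove matching upper and lower bounds on $\nu_c(p)$, for each fixed $p$, with constants that do not involve the exponent $a$ in $L=N^a$. The lower bound --- rapid disconnection when $\nu$ is below an explicit $a$-independent constant (and, for small $p$, when $\nu<(1/21)\log(1/p)$) --- is already essentially supplied by Theorems \ref{NL1i} and \ref{NL2}, whose bounds are free of $L$; the remaining work there is only to sharpen the constants so that the two bounds meet. The real content is the upper bound: for $\nu$ large (depending on $p$, not on $a$), discordant edges must persist for $\gg NL$ updates, and in fact the minority density must be driven below $p$ before disconnection, as in Theorem \ref{BS1ii}.

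For the upper bound I would make the separation-of-time-scales picture $(\star)$ quantitative and uniform in $a$. First I would show that, started from product measure with density $p$, the rescaled local profiles $A_{k,\ell}(t)/N$ and $B_{k,\ell}(t)/N$ --- with $k,\ell$ in units of $L$ --- relax to a quasi-stationary profile in $O(NL)$ updates, while $N_1(t)/N$ stays within $o(1)$ of $p$ for $o(N^2L)$ updates; the latter is already proved in Section \ref{sec:EEPA} because $N_1$ is a martingale with controllable quadratic variation. The ratio of these two scales is $N$ regardless of $a$, so the time-scale separation is itself $a$-free. Next I would identify the quasi-stationary profile: this is the subject of Sections \ref{sec:AME}--\ref{sec:GFPDE}, where the $L\to\infty$ limit of the (approximate) equilibrium is described by the PDEs \eqref{Ueqr} and \eqref{Veqr} for the generating functions $U(a,b)$ and $V(a,b)$, and the endpoints $a(\nu),\,1-a(\nu)$ of the ``arch'' are read off from the densities at which the edge count $N_{10}$ of that limiting profile vanishes. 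Since neither \eqref{Ueqr} nor \eqref{Veqr} contains the exponent $a$, once one shows that the true quasi-stationary profile converges, as $N\to\infty$ with $L=N^a$, to the solution of these PDEs for \emph{every} $a\in(0,1)$, the formula $\nu_c(p)=\inf\{\nu: p\in(a(\nu),1-a(\nu))\}$ gives the same answer for all $a$, which is the conjecture. A robustness input I would need throughout is an $a$-uniform version of Lemma \ref{Dmax}: the maximum degree must stay $O(L)$ over the relevant window so the profiles remain supported on $(k,\ell)$ with $k+\ell=O(L)$. Since a fixed vertex has degree fluctuations of order $\sqrt{L}=N^{a/2}$, concentration only improves as $a$ grows and the delicate end is $a\to 0^+$; but the estimates in the proof of Lemma \ref{Dmax} are already written without reference to $a$.

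I expect the main obstacle to be replacing the AME heuristic by a theorem. One must show that the true joint law of a tagged vertex together with the local counts around it is well approximated --- a propagation-of-chaos statement --- by the AME dynamics, and that the approximation is accurate enough, uniformly in $L=N^a$, to pin down where the arch sits. This is already hard for $a=1$: it is the crux of Basu and Sly's long proof of Theorem \ref{BS1ii}, whose argument is moreover specific to $G(N,1/2)$; adapting it to a random graph that is continually rewired, and doing so for all $a$ at once, is the real work. An alternative that would bypass the AME is a direct hydrodynamic limit for the measure-valued process $t\mapsto \frac{1}{N}\sum_v \delta_{(\mathrm{state}(v),\,d_1(v)/L,\,d_0(v)/L)}$ on the fast scale, in the spirit of Cox and Greven, together with an a priori quasi-stationarity estimate preventing the process from leaving a neighborhood of the arch for time $\gg NL$ updates; the diffusion coefficient for $N_1/N$ on the slow scale then plays the role of $\beta_d$, and being an integral against the $a$-free limiting profile it is $a$-independent. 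Either way, the part I am least sure how to do uniformly in $a$ --- and which is essential --- is the lower bound on the survival time, i.e.\ showing that $\tau$ really is $\gg NL$ updates and that the density is genuinely pushed off $p$ before the arch collapses.
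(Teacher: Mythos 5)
This statement is a conjecture that the paper does not prove, and you are right to treat it as such: your submission is a research program, not a proof, and nothing in the paper closes the gap either. The paper's own support for the conjecture consists of exactly the ingredients you cite --- the $L$-free constants in Theorems \ref{NL1i} and \ref{NL2}, the fact that the AME analysis removes the dependence on $L$ by sending $L\to\infty$ so that the limiting PDEs \eqref{Ueqr} and \eqref{Veqr} contain no trace of the exponent, and the simulations (Figures \ref{fig:2500vs50} and \ref{fig:2500vs25}) showing nearly identical arches for $L=25$ and $L=50$. Your program elaborates these into a plausible two-sided strategy and, importantly, names the genuinely open obstacles honestly: a propagation-of-chaos theorem justifying the AME (or a direct hydrodynamic limit for the empirical measure of local degree profiles) uniformly in $a$, and a survival lower bound of order $\gg NL$ updates for large $\nu$, which even in the dense case $a=1$ required the long argument of Basu and Sly for Theorem \ref{BS1ii}. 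Two small cautions. First, the paper is explicit (Section \ref{sec:PA} and the discussion around the Guess) that closed approximations can get the critical value wrong --- the pair approximation predicts $\nu_c(1/2)=1/2$ while simulation gives roughly $0.8$ --- so even a rigorous convergence of the quasi-stationary profile to the AME fixed point would only transfer $a$-independence if the AME error itself vanishes as $L\to\infty$, which is an additional claim your program should state separately; the 10\% discrepancy in $U_{aa}$ reported in Table \ref{table:simvsth} is a warning that the AME is not exact even for large $L$. Second, there is a notational collision worth avoiding: the paper uses $a$ both for the exponent in $L=N^a$ and for the generating-function variable in $U(a,b)$ and for the arch endpoint $a(\nu)$, and your sketch inherits all three uses in one sentence. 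Neither point is a flaw in your reasoning; the proposal is a reasonable roadmap for a statement the authors themselves leave open.
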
 

In support of this conjecture, Figure \ref{fig:2500vs25} gives a simulation of the system with $N=2500$, $L=25$, $\nu=2.5$ and
shows that the $(N_1(t),N_{10}(t))$ is well approximated by the quadratic equation $1.9059x(1-x)-0.11633$.
In Figure \ref{fig:2500vs50} we saw that for the system with $N=2500$, $L=50$, $\nu=2.5$ the curve 
 $(N_1(t),N_{10}(t))$ is well approximated by the quadratic equation $1.9328x(1-x)-0.11464$.

\begin{figure}[h] 
  \centering
  \includegraphics[width=5.67in,height=2.7in,keepaspectratio]{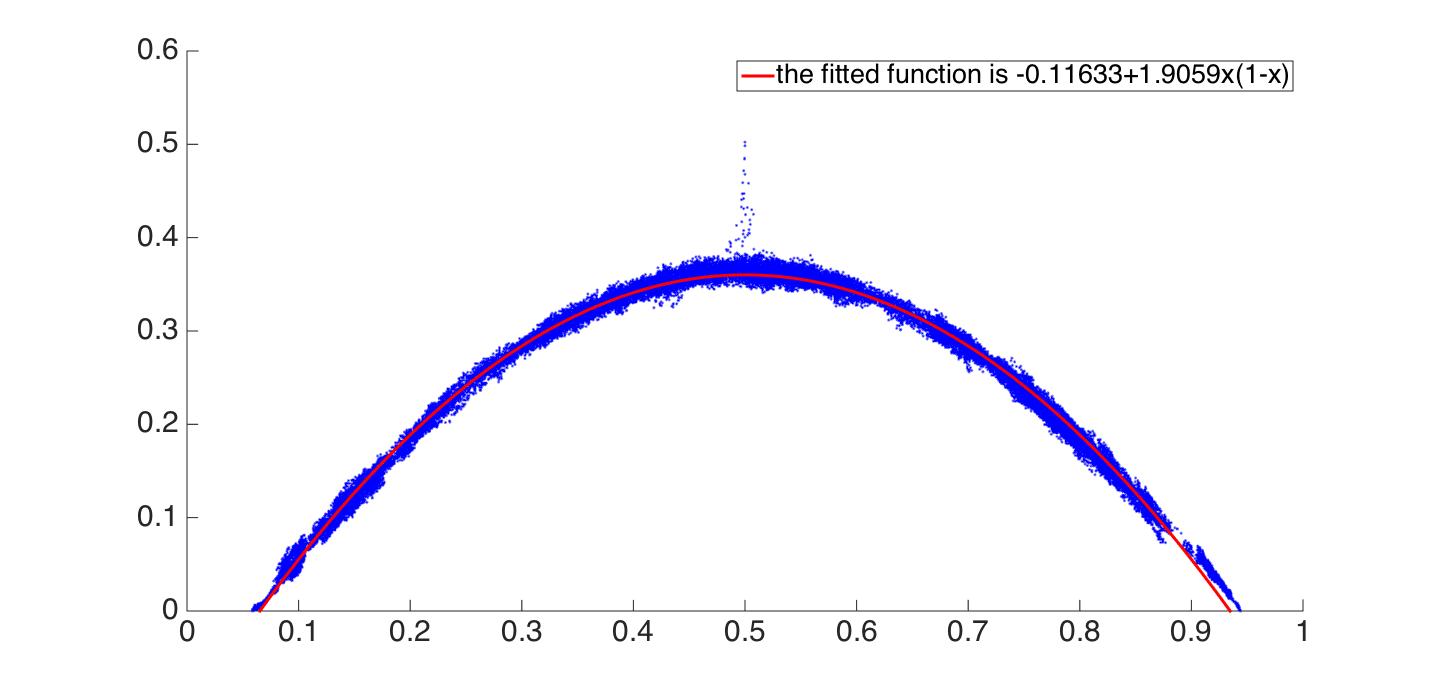}
  \caption{Simulation with $N=2500$, $L=25$, $\nu = 2.5$}
  \label{fig:2500vs25}
\end{figure}

\clearp

\section{Exact Equations and Pair Approximation}\label{sec:EEPA}

The results of Basu and Sly \cite{BaSly} described in the previous section establish the existence of a phase transition, but do not give very much information
about it. Our first step in obtaining more detailed (but approximate) results for the evolving voter model is to write down evolution equations for ``finite dimensional distributions.'' Define
\begin{align*}
N_{i} & = \sum_{x} 1_{\{\xi(x)=i\}}, \\
N_{ij} & = \sum_{x, y \sim x} 1_{\{\xi(x)=i, \xi(y)=j\}}, \\
N_{ijk} & = \sum_{x, y \sim x, z \sim y, z \neq x} 1_{\{\xi(x)=i, \xi(y)=j,\xi(z)=k \} },
\end{align*}
where $y \sim x$ means $y$ is a neighbor of $x$, and $\xi(x)$ denote the opinion of vertex $x$.  More abstractly, in the terminology of the theory of the convergence 
of random graphs $N_{ijk}$ is the number of homomorphisms of the small labeled graph drawn below to the one on $N$ vertices.

\begin{center}
\begin{picture}(180,50)
\put(30,20){\line(1,0){120}}
\put(28,17){$\bullet$}
\put(30,28){$i$}
\put(88,17){$\bullet$}
\put(90,28){$j$}
\put(148,17){$\bullet$}
\put(150,28){$k$}
\end{picture}
\end{center}

\noindent
Note that $N_{11}$ counts each $1-1$ twice, once for each orientation. Similarly $N_{101}$ counts each $1-0-1$ twice.
It is natural to think of these as finite-dimensional distributions but they are not. If we let $d(x)$ be the degree of vertex $x$, Then
\begin{align}
\sum_{i,j} N_{i,j} & = \sum_x d(x) = NL 
\label{sumij}\\
\sum_{i,j,k} N_{i,j,k} & = \sum_x d(x)(d(x)-1) \label{sumijk}
\end{align}
The first quantity is constant in time, but the second one is not.

Let $p = N_1/N$ be the initial fraction of vertices in state 1. Our first observation, which is implicit in the
arguments given in the previous section, is that in determining whether rapid disconnection
occurs we can suppose $p$ is constant \xyz{over the time scale if interest}. To do this we note that rewiring events do not change the number of 1's. 
The number of oriented edges is $NL$. Thus the number of 1's is can increase by 1 with rate at most $(NL/2)(\nu/L)$, 
and decrease by 1 with rate at most $(NL/2)(\nu/L)$. $N_1(t)/N$ is a martingale, so 
$$
E(N_1(t)/N - p)^2 \le \frac{\nu  t}{N},
$$ 
and therefore the fraction of 1's will not change significantly until times of $O(N)$.

Using reasoning from \cite{evo8} it is easy to show (see Section \ref{sec:dereq}) that
\begin{align}
\frac{dN_{10}}{dt} & = - N_{10} + \frac{\nu}{L} [N_{100} - N_{010} + N_{110} - N_{101}] 
\label{N10eq}\\ 
\frac{1}{2}\frac{dN_{11}}{dt} & = p N_{10} + \frac{\nu}{L} [N_{101} - N_{011}] 
\label{N11eq}\\
\frac{1}{2}\frac{dN_{00}}{dt} & = (1-p) N_{10} + \frac{\nu}{L} [N_{010} - N_{100}]
\label{N00eq}
\end{align}
Note that $N_{ij} = O(NL)$ while $N_{ijk} = O(NL^2)$ so the terms on the right-hand side of \eqref{N10eq}-\eqref{N00eq} are of the same order of magnitude.
In writing these equations we have omitted terms of the form $(\nu/L)N_{ij}$ since there are $O(N)$. Note that 
\eqref{sumij} implies  three equations sum to 0.

\subsection{Pair approximation (PA)}\label{sec:PA}

As is often the case in interacting particle systems, the derivatives of probabilities concerning two sites
involve three sites and if one writes differential equations for  probabilities concerning three sites
one gets expressions involving four sites. One way to deal with this problem is to use the {\it pair approximation}
to express three site probabilities in terms of the density of 1's and two site probabilities.
In the sparse graph case considered in \cite{evo8} this was an approximation that did not give a very good
answer, see Figure 9 there. 

When we began this research, here we thought (or hoped) that when the degrees are large, 
the pair approximation would give the right answer. Intuitively, if $y$ is a neighbor of $x$ then since $x$ is one 
of $O(L)$ neighbors of $y$ the state of $x$ has very
little influence on the state of $y$ and even less on the states of the neighbors of $y$. 

To do the pair approximation, 
let $J_i$ and $K_i$ be the average number of 1 neighbors and 0 neighbors of a vertex in state $i$.
By definition 
$$
N_1J_1=N_{11}, \qquad N_0K_0=N_{00}, \qquad N_1K_1=N_0J_0=N_{10}.
$$ 
The pair approximation in this context states that if $j_0(y)$ is the number of neighbors of 
a vertex $y$ in state 1 then
when we average over the neighbors of $x$, having opinion $0$, we get the mean $J_0$. That is, 
$$
N_{101} = \sum_{x: \xi(x)=1} \sum_{y : \xi(y) = 0} j_0(y) = N_{10}J_0.
$$

Applying similar reasoning for the other $N_{ijk}$'s we have
\begin{align}
\frac{1}{2}\frac{dN_{11}}{dt} & \approx p N_{10} + \frac{\nu}{L} [N_{10}J_0 - N_{01}J_1], 
\label{paN11}\\
\frac{1}{2}\frac{dN_{00}}{dt} & \approx (1-p) N_{10} + \frac{\nu}{L} [N_{01}K_1 - N_{10} K_0].
\label{paN00}
\end{align}
Analyzing these equations in Section \ref{sec:PT1} gives the following predictions about the means in equilibrium
\begin{align}
J_0^*  = L \left( 1 - \frac{p^2 +(1-p)^2}{\nu} \right) p,  & \qquad J_1^* = J_0^* + \frac{Lp}{\nu},
\label{paj}\\
K_1^*  = L \left( 1 - \frac{p^2 +(1-p)^2}{\nu} \right) (1-p),  & \qquad K_0^* = K_1^* + \frac{L(1-p)}{\nu}.
\label{pak}
\end{align}
 
In equilibrium we must have $J_0^*, K_1^* \ge 0$. This leads to the following:

\mn
{\bf Guess.} {\it In the rewire-to-random model, rapid disconnection occurs for $\nu< \nu_c(p) = p^2 + (1-p)^2$,
and prolonged persistence for $\nu > \nu_c(p)$.}

\mn
Unfortunately simulation shows that the second conclusion is not correct. If we let $N=1600$ and $L=40$ then this guess predicts that if the starting frequency of 1's is $=1/2$ then the phase transition occurs at $\nu=1/2$, while simulation
shows that rapid disconnection occurs for $\nu = 0.8$. To see the flaw in the intuition used earlier note that
$$
N_{101} = \sum_{ x : \xi(x) = 0 } j_0(x) \cdot (j_0(x)-1) \approx N_1 E [j_0(x)^2].
$$
Since $E [j_0(x)^2] > [E j_0(x)]^2 = J_0^2$ unless the distribution of $x$ is degenerate, we have
$$
N_{101} \ge J_0 N_1 J_0 = N_{10} J_0.
$$

\begin{conj}
The critical value from the pair approximation is a lower bound on the true value.
\end{conj}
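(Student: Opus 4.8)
Write $\nu_c^{\mathrm{PA}}(p)=p^2+(1-p)^2$ for the pair-approximation prediction and $\nu_c(p)$ for the true persistence threshold; the claim is $\nu_c(p)\ge\nu_c^{\mathrm{PA}}(p)$, i.e.\ that for every $\nu<\nu_c^{\mathrm{PA}}(p)$ the true process disconnects on the fast, $O(NL)$-update, time scale. The plan is to compare the pair statistics of the true dynamics with the solution of the closed pair-approximation ODE and to show that the genuine dynamics drives $N_{10}$ to $0$ at least as fast. The mechanism is exactly the one displayed just before the conjecture: the two triple counts that enter $dN_{10}/dt$ in \eqref{N10eq} with a minus sign, $N_{101}$ and $N_{010}$, are \emph{underestimated} by the pair closure, by the nonnegative empirical second-moment corrections $N_{101}-N_{10}J_0$ and $N_{010}-N_{10}K_1$; since these terms are suppressive, replacing the closure values by the true larger values can only make $N_{10}$ decay faster, and Section~\ref{sec:PT1} already shows that the pair-approximation $N_{10}$ decays to $0$ precisely when $\nu<\nu_c^{\mathrm{PA}}(p)$ (this is the content of the Guess).

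Concretely I would proceed as follows. (i) Make the exact relations \eqref{N10eq}--\eqref{N00eq} rigorous for the stochastic model, either as drift (super/sub-martingale) identities for the discrete efficient algorithm --- in the spirit of the proof of Theorem~\ref{NL1i} --- or by passing to the $L\to\infty$ hydrodynamic limit with concentration estimates, reusing the machinery of Sections~\ref{sec:ODE}--\ref{sec:Ctoeq} and the a priori degree control of Lemma~\ref{Dmax}; on the time scale of interest ($o(N)$, hence $o(N^2L)$ updates) we also have $N_1\sim pN$ and $N_0\sim(1-p)N$ by the martingale argument of Section~\ref{sec:EEPA}. (ii) Prove the one-sided closures $N_{101}\ge N_{10}J_0$ and $N_{010}\ge N_{10}K_1$ by Cauchy--Schwarz applied to the empirical distribution of the neighbor-in-state-$1$ (resp.\ neighbor-in-state-$0$) counts, exactly as sketched before the conjecture. (iii) Bound the two ``positive'' triples from above, $N_{100}\le c_0\,N_{10}$ and $N_{110}\le c_1\,N_{10}$ for suitable constants $c_i$. (iv) Insert (ii)--(iii) into \eqref{N10eq} to obtain a closed scalar differential inequality for $N_{10}$, and dominate $N_{10}(t)$ by the solution of the corresponding scalar ODE via a comparison theorem. (v) Check that, with the right choice of $c_0,c_1$, this scalar ODE is dominated by --- or coincides with --- the pair-approximation equation coming from \eqref{paN11}--\eqref{pak}, whose solution decays to $0$ on the $O(NL)$ scale for $\nu<\nu_c^{\mathrm{PA}}(p)$; conclude rapid disconnection, hence $\nu_c(p)\ge\nu_c^{\mathrm{PA}}(p)$.

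The main obstacle is step (iii) with the \emph{sharp} constants. Unlike $N_{101},N_{010}$, the positive triples have no a priori sign relative to their pair-closure values: $N_{100}-N_{10}K_0$ equals $N_0$ times the empirical covariance, over state-$0$ vertices, between the number of state-$1$ neighbors and the number of state-$0$ neighbors, and the sign of this covariance is not controlled --- one in fact expects it to be nonnegative, since both counts grow with the degree, which is the unhelpful direction. The crude choice $c_0=c_1=D_{\max}\approx L$ yields decay only for $\nu<1/2$, which recovers $\nu_c^{\mathrm{PA}}(1/2)$ but falls short of the target when $p\ne1/2$; to reach $p^2+(1-p)^2$ one must instead take $c_0,c_1$ close to the quasi-stationary mean neighbor counts $K_0^*,J_1^*$ of \eqref{paj}--\eqref{pak}, i.e.\ one must know that the empirical degree-correlation structure is near the quasi-stationary one --- precisely the separation-of-time-scales picture ($\star$) that the paper can support only by simulation. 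I would therefore expect a complete proof to run through the approximate master equation: first show, building on Sections~\ref{sec:ODE}--\ref{sec:Ctoeq}, that the AME occupation numbers $A_{k,\ell},B_{k,\ell}$ dominate the corresponding pair and triple statistics of the true process after a short burn-in, and then carry out the comparison (ii)--(v) at the level of the AME, where the neighbor-count distributions are tracked explicitly and the corrections $N_{101}-N_{10}J_0$, $N_{100}-N_{10}K_0$, etc.\ can be evaluated rather than merely signed. A more modest but fully rigorous consequence of (i)--(v) with the crude bounds in (iii) is the uniform lower bound $\nu_c(p)\ge1/2$, which already establishes the conjecture at $p=1/2$.
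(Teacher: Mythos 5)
The statement you are asked to prove is labeled a conjecture in the paper, and the paper does not prove it: immediately after stating it the authors only remark that the conclusion \emph{would} follow if one could show that the pairs $(J_0,K_0)$ and $(J_1,K_1)$ are each negatively correlated, and that this ``is far from obvious since $N_0$ and $N_1$ are random.'' Your proposal reproduces essentially this same program and then runs into essentially the same wall, which you honestly flag in step (iii): the suppressive triples $N_{101}$, $N_{010}$ do come with the favorable one-sided bound $N_{101}\ge N_{10}J_0$ (second moment at least the square of the mean, as in the display just before the conjecture), but the amplifying triples $N_{100}=\sum_{y:\xi(y)=0}j_0(y)k_0(y)$ and $N_{110}=\sum_{y:\xi(y)=1}j_1(y)k_1(y)$ differ from their pair closures by covariance terms whose sign is exactly the open question, and these errors are of the same order $O(NL^2)$ as the terms you control. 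Without that input the comparison in steps (iv)--(v) cannot be closed at the claimed threshold $p^2+(1-p)^2$, so the proposal is a plan for a proof, not a proof. Your suggested detour through the AME does not repair this: the paper proves that the AME converges to its own equilibrium (Theorem \ref{NLMtoeq}), not that the AME dominates, or is close to, the true process; indeed Table \ref{table:simvsth} shows the AME misses $U_{aa}$ by about ten percent.

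Two further points. First, your ``modest but fully rigorous'' fallback $\nu_c(p)\ge 1/2$ is also not established: it rests on step (i), i.e., on upgrading the exact expectation identities \eqref{N10eq}--\eqref{N00eq} to a pathwise or concentrated statement, which the paper does not do. The only rigorous disconnection bound in the paper, Theorem \ref{NL1i}, runs an exponential supermartingale off the crude degree control of Lemma \ref{Dmax} and at $p=1/2$ yields only $\nu\le 0.06$, far below $1/2$, precisely because $D_{\max}(tNL)$ grows like $(1+\ep+t)L$ over the relevant window and because one must control fluctuations rather than mean drifts. Second, the Cauchy--Schwarz step (ii) is correct and is exactly the paper's explanation of why the PA \emph{should} be a lower bound (it underestimates the terms that destroy discordant edges), but a one-sided bound on two of the four triple terms does not produce a usable differential inequality when the other two can err in the unfavorable direction. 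The honest status of the statement is: conjecture, with the missing ingredient being the correlation inequality for the joint neighbor counts that the paper itself identifies.
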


\mn
If one can show that the pairs $(J_0,K_0)$ and $(J_1,K_1)$ are each negatively correlated this would follow.
This is far from obvious since $N_0$ and $N_1$ are random.

\clearp

\section{Approximate Master Equation (AME)}\label{sec:AME}

Most of the work in this paper is devoted to studying an improvement of the pair approximation that was also used in \cite{evo8}. 
For more on the PA and the AME and their use in studying dynamics on networks, see \cite{JPG1,JPG2}. The AME (i) uses ratios such as $N_{101}/N_{10}$
as parameters rather than approximating them by $N_{01}/N_0$ and (ii) tracks not only the means but the 
joint distribution of the state of a site and the number of neighbors 
with states 1 and 0. We visualize our system as
$N$ particles, one for each vertex, moving in two planes. A point at $(i,j,k)$ means that the state of the vertex is $i$,
there are $j$ neighbors in state 1, and $k$ in state 0. Voting events at the focal vertex $x$
cause jumping from $(1,j,k) \to (0,j,k)$ at rate $\nu k/L$ and from
$(0,j,k) \to (1,j,k)$ at rate $\nu j/L$. For the rewire-to-random model the transitions within each plane are as follows: 

\begin{center}
\begin{picture}(300,160)
\put(80,80){\vector(1,0){50}}
\put(85,120){$\frac{N_{10}}{N}$}
\put(80,80){\vector(0,1){50}}
\put(110,90){$\frac{N_{10}}{N}$}
\put(80,80){\vector(0,-1){50}}
\put(75,15){$k$}
\put(80,80){\vector(1,-1){50}}
\put(115,10){$\frac{kN_1}{N} + \frac{\nu}{L} k \frac{N_{101}}{N_{10}}$}
\put(80,80){\vector(-1,1){50}}
\put(10,135){$\frac{\nu}{L} j \frac{N_{110}}{N_{11}} $}
\put(20,30){plane 1}
\put(230,80){\vector(1,0){50}}
\put(235,120){$\frac{N_{10}}{N}$}
\put(230,80){\vector(0,1){50}}
\put(260,90){$\frac{N_{10}}{N}$}
\put(230,80){\vector(-1,0){50}}
\put(170,75){$j$}
\put(230,80){\vector(1,-1){50}}
\put(275,10){$\frac{\nu}{L} k \frac{N_{001}}{N_{00}} $}
\put(230,80){\vector(-1,1){50}}
\put(155,135){$\frac{jN_0}{N} + \frac{\nu}{L} j \frac{N_{010}}{N_{01}}$}
\put(190,30){plane 0}
\end{picture}
\end{center}

Here the rates on horizontal and vertical edges which come from rewiring are exact. On the diagonal arrows
$kN_1/N$ and $jN_0/N$ are exact but the others come from e.g., using $N_{ijk}/N_{ij}$ to compute the expected
number of neighbors of $z$ in state $k$ when $x$ is in state $i$ and $y$ is in state $j$. It is important to 
note that while the number of edges is conserved is in the original model, that is not true for our approximation.

On the time scale of our calculation $N_1=Np$ stays constant,
so the dynamics in plane 1 can be expressed as
\begin{align}
\frac{dj_1}{dt} & =  \frac{N_{10}}{N} \hphantom{-\,k_1\,} + p k_1 + \frac{\nu}{L}\cdot \frac{N_{101}}{N_{10}}k_1 
- \frac{\nu}{L} \cdot \frac{N_{110}}{N_{11}}j_1,\label{eqj1}\\
\frac{dk_1}{dt} &  = \frac{N_{10}}{N} -k_1 - p k_1 - \frac{\nu}{L}\cdot \frac{N_{101}}{N_{10}} k_1 
+ \frac{\nu}{L} \cdot \frac{N_{110}}{N_{11}}j_1.\label{eqk1}
\end{align}
Writing $q=1-p$ the plane 0 dynamics are
\begin{align}
\frac{dj_0}{dt} & =  \frac{N_{10}}{N}  - j_0 - q j_0 - \frac{\nu}{L}\cdot \frac{N_{010}}{N_{01}}j_0 
+ \frac{\nu}{L} \cdot \frac{N_{001}}{N_{00}}k_0,\label{eqj0}\\
\frac{dk_0}{dt} &  = \frac{N_{10}}{N} \hphantom{-\,k_1\,}+ q j_0 + \frac{\nu}{L}\cdot \frac{N_{010}}{N_{00}} j_0 
- \frac{\nu}{L} \cdot \frac{N_{001}}{N_{00}}k_0.\label{eqk0}
\end{align}
To go from the first set to the second exchange $j \leftrightarrow k$, $0 \leftrightarrow 1$, and change $p$ to $q$.

To study this system, we will introduce
\beq
\alpha = \frac{N_{101}}{N_{10}}, \qquad \beta = \frac{N_{110}}{N_{11}}, \qquad \eta = \frac{N_{10}}{N}  \qquad
\delta =  \frac{N_{010}}{N_{01}}, \qquad \ep = \frac{N_{001}}{N_{00}}.
\label{grdef}
\eeq
and analyze the general system
\begin{align}
\frac{dj_1}{dt} & =  \eta \hphantom{-\,k_1\,} + p k_1 + \frac{\nu}{L}\alpha k_1 
- \frac{\nu}{L} \beta j_1,\label{grj1}\\
\frac{dk_1}{dt} &  = \eta -k_1 - p k_1 - \frac{\nu}{L}\alpha k_1 
+ \frac{\nu}{L} \beta j_1,\label{grk1}\\
\frac{dj_0}{dt} & =  \eta  - j_0 - q j_0 - \frac{\nu}{L}\delta j_0 
+ \frac{\nu}{L} \ep k_0,\label{grj0}\\
\frac{dk_0}{dt} &  = \eta \hphantom{-\,k_1\,}+ q j_0 + \frac{\nu}{L}\delta j_0 
- \frac{\nu}{L} \ep k_0.\label{grk0}
\end{align}
Once this is done we will look for self-consistent parameters, i.e., values of the Greek letters so that \eqref{grdef} holds
in equilibrium.

Suppose for the moment that there are no jumps between planes.
To find the equilibrium in plane 1, we add \eqref{grj1} and \eqref{grk1} to get 
$$
\frac{d(j_1+k_1)}{dt} = 2\eta -k_1.
$$
so in equilibrium $k_1^* = 2\eta$. Using this in \eqref{grj1} we have
$$
\frac{\nu}{L} \cdot \beta j_1^* = \eta \left( 1 + 2p + 2 \alpha\frac{\nu}{L} \right).
$$
A similar calculation shows that $j_0^* = 2\eta$ and
$$
\frac{\nu}{L} \cdot \ep k_0^* = \eta \left( 1 + 2q + 2 \delta\frac{\nu}{L} \right).
$$
This equilibrium $(j_1^*,k_1^*)$ is globally attracting because, as we show in Section \ref{sec:ODE}, 
the linear differential equations in \eqref{grj1} and \eqref{grk1} have solution
$$
\begin{pmatrix}
j_1(t)- j_1^*(t)\\
k_1(t)- k_1^*(t)
\end{pmatrix}= \exp[ A t] \begin{pmatrix} 
j_1(0)- j_1^*(0)\\
k_1(0)-k_1^*(0)
\end{pmatrix},
$$
where $A$ is a matrix with two negative real eigenvalues. See Section \ref{sec:ODE} for details.

To analyze our two plane system, we take advantage of results of Lawley, Mattingly, and Reed \cite{LMR}. To put our system into their setting,
we assume that the values of $N_1/N$, $N_0/N$ and $N_{10}/NL$ are fixed. When this holds the individual particles move independently. If we let
$N\to\infty$ scale space by $L$, and suppose
\beq
\frac{\alpha}{L} \to \bar\alpha,   \qquad \frac{\beta}{L} \to \bar \beta,  \qquad  \frac{\eta}{L} \to \bar \eta, 
\qquad \frac{\delta}{L} \to \bar \delta,   \qquad \frac{\ep}{L} \to \bar\ep. 
\label{bargrdef}
\eeq
then in the limit we get a one particle system that moves according to the following  differential equations in plane 1
\begin{align*}
\frac{dx_1}{dt} & =  \bar\eta \hphantom{-x_1\,} + p y_1 + \nu \bar\alpha y_1 
- \nu  \bar \beta x_1,\\
\frac{dy_1}{dt} &  = \bar\eta -y_1 - p y_1 - \nu \bar\alpha y_1 
+ \nu \bar \beta x_1.
\end{align*}
and in plane 0 according to
\begin{align*}
\frac{dx_0}{dt} &  = \bar\eta - x_0 - q x_0 - \nu\bar\delta x_0 
+ \nu \bar\ep y_0, \\
\frac{dy_0}{dt} & =  \bar\eta \hphantom{-x_0\,} + q x_0 + \beta \bar\delta x_0 
- \beta \bar\ep y_0.
\end{align*}
The particle jumps from plane 1 to plane 0 at rate $\nu y_1$ and from plane 0 to plane 1 at rate $\nu x_0$.

If there are no jumps between planes then the system in plane 1 has a fixed point with $y_1^* = 2\bar\eta$ and
$$
\nu\bar\beta x_1^* = \bar\eta \left( 1 + 2p + 2\nu\bar\alpha \right),
$$
A similar calculation shows that $x_0^* = 2\eta$ and
$$
\nu \bar\ep y_0^* = \bar\eta \left( 1 + 2q + 2 \nu \bar \delta  \right).
$$
By almost exactly the same reasoning used on the previous system, the fixed points in each plane are globally attracting. Building in this, in Section \ref{sec:Ctoeq} we prove the following theorem.

\begin{theorem} \label{NLMtoeq}
Fix $\nu>0$, $p\in (0,1)$ and let $q=1-p$. For any $\bar\alpha, \bar\beta, \bar\gamma, \bar\ep, \bar\eta >0$
The two plane system has a unique stationary distribution that is the limit starting from
any initial configuration.
\end{theorem}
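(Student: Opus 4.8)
The plan is to realise the two plane system as a piecewise-deterministic Markov process on $E=\{0,1\}\times[0,\infty)^2$ (the plane label together with the two degree coordinates), to exhibit a globally absorbing compact set, and to prove a Doeblin minorization on it; by standard renewal and ergodic arguments this yields a unique stationary distribution attracting every initial law in total variation, which is the assertion. First I would record the structural facts. In each plane the drift is affine with stable linear part --- two negative real eigenvalues, by the computation in Section~\ref{sec:ODE} --- and with fixed point $z_1^*=(x_1^*,2\bar\eta)$, resp.\ $z_0^*=(2\bar\eta,y_0^*)$, in the open positive quadrant. On the axes the relevant component of the drift points strictly inward (e.g.\ $dx_1/dt=\bar\eta+(p+\nu\bar\alpha)y_1>0$ when $x_1=0$), so the positive quadrant is forward invariant; since a jump moves only the plane label, the set $K=\{0,1\}\times(\overline{B_R}\cap[0,\infty)^2)$ is forward invariant and globally absorbing for $R$ large, and every trajectory enters it after a deterministic time depending only on its start. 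The jump rates $\nu y_1$ and $\nu x_0$ are bounded above on $K$ by continuity and are bounded below on a fixed neighbourhood of $z_1^*$, resp.\ $z_0^*$, because $y_1^*,x_0^*>0$. Hence there is no explosion, the semigroup is Feller, and it suffices to prove a minorization $P_{t_0}(z,\cdot)\ge\epsilon\,\phi$ for all $z\in K$.

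\textbf{The minorization.} I would construct, for a fixed $t_0$, an event of probability $\ge\epsilon>0$ uniform in $z\in K$ on which the time-$t_0$ position has a density bounded below on a single fixed ball $B$ in plane $1$. Stage~0 relaxes and routes: using only that the rates are bounded above on $K$, with probability bounded below the particle makes no jump for a time $\Theta$ (from plane $0$: wait in plane $0$, make one jump to plane $1$ near $z_0^*$, then wait in plane $1$), so by the exponential in-plane contraction it then lies within $\delta(\Theta)$ of $z_1^*$, \emph{uniformly in $z$}. Stage~1 exploits the smoothing by random switching times: starting within $\delta(\Theta)$ of $z_1^*$, the particle makes exactly one jump to plane $0$ at time $u_1$ and one jump back to plane $1$ at elapsed time $v_1$, with $(u_1,v_1)$ confined to a small box on which --- the rates being bounded above and below near the fixed points --- their joint law has a density bounded above and below, and then flows in plane $1$ to time $t_0$. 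The map $(u_1,v_1)\mapsto z_{t_0}$ is $C^1$, and its two partial derivatives are, up to the invertible linearisation of a short plane-$1$ flow, the plane-$0$ and plane-$1$ affine drifts evaluated at nearby points; these are linearly independent off the line through $z_0^*$ and $z_1^*$, and, should that line be degenerate, off a codimension-one set, since the two planar drift matrices in~\eqref{grj1}--\eqref{grk0} are related by the $x\leftrightarrow y$, $p\leftrightarrow q$ symmetry and do not share all eigenvectors. So the map is a submersion near the centre of the box, pushing the density of $(u_1,v_1)$ forward to a density for $z_{t_0}$ bounded below on a ball; since Stage~0 forces the input to Stage~1 within $\delta(\Theta)$ of $z_1^*$ regardless of $z$, that ball can be taken independent of $z$ after a slight shrinking. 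This regularization-by-random-switching argument is exactly the kind carried out by Lawley, Mattingly and Reed~\cite{LMR} for switched ODEs with globally attracting fixed points; their set-up is not literally ours --- our switching rates depend on the continuous state --- but the mechanism transfers, and I would adapt it rather than redo it.

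\textbf{Conclusion and the main difficulty.} Given the minorization on the globally absorbing compact $K$, Doeblin's theorem --- equivalently, the Athreya--Ney--Nummelin regeneration scheme together with elementary renewal theory --- yields a unique stationary distribution $\mu$ and $\|P_t(z,\cdot)-\mu\|_{TV}\le Ce^{-\lambda t}$ with $C,\lambda$ uniform over $z$ in bounded sets, hence over all of $E$ once the deterministic entrance time into $K$ is folded into the constant; that is the theorem. The bookkeeping in Stage~0 and the exponential-clock estimates are routine. The genuine obstacle is the nondegeneracy in Stage~1: one must show the set reachable from a neighbourhood of $z_1^*$ under the switched flow is genuinely two-dimensional, i.e.\ that the two planar affine drifts are transverse on the relevant part of $K$. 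I expect this from the explicit matrices in~\eqref{grj1}--\eqref{grk0}, but it is the one step that needs a real computation rather than a soft argument, and it is presumably why the authors remark that the LMR framework ``does not quite match'' theirs.
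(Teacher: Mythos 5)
Your proposal takes a genuinely different route from the paper, and the step you yourself flag as ``the genuine obstacle'' is a real gap, not a formality. The paper's proof (Section \ref{sec:Ctoeq}) never attempts a Doeblin minorization. Instead it uses the backward-iteration trick of Lawley, Mattingly and Reed \cite{LMR}: the embedded chain recording the particle's position at successive plane changes is a composition of i.i.d.-in-law random strict contractions (the constant $K_i(t)<1$ in \eqref{Phicon} coming from the two negative real eigenvalues computed in Section \ref{sec:ODE}); the reversed compositions $G^1_\omega\circ\cdots\circ G^n_\omega(z)$ then converge almost surely to limits $Y_1,Y_0$ independent of $z$, and Markov renewal/ergodic arguments transfer this to the continuous-time process. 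That argument needs only the contraction property, is completely insensitive to whether the two affine drifts are transverse, and delivers convergence in distribution --- which is all the theorem claims.

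Your Harris/Doeblin scheme, by contrast, stands or falls on the Stage~1 submersion, and that is exactly where it is incomplete. For planar switched affine flows the support of the invariant measure can be one-dimensional (if, say, both linear parts were scalar, the segment joining the two fixed points would be invariant and attracting), in which case no minorization by a two-dimensional reference measure on a ball is possible even though the theorem remains true; note the statement is asserted for \emph{all} positive values of $\bar\alpha,\bar\beta,\bar\ep,\bar\eta$, so you cannot dismiss degenerate parameter choices. Hence the transversality of the drifts in \eqref{grj1}--\eqref{grk0} is not a loose end that ``presumably'' holds: it is the entire content of your argument and must be verified for these specific matrices. If the computation goes through, your proof yields strictly more than the paper's (absolute continuity of the stationary law and exponential total-variation convergence, versus convergence in distribution), which is a genuine payoff; as written, however, it has a hole precisely where the paper's contraction argument has none, and the soft route via \cite{LMR} is the one that proves the stated theorem without any nondegeneracy hypothesis.
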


The proof, which we learned from \cite{LMR}, 
 is based on a variant of a trick used in random matrices (and other {subjects}). To prove that the product
$A_1 \cdots A_n$ converges in distribution, we show that $A_{-n} \cdot A_{-n+1} \cdots A_{-1}$ converges almost surely.
To apply this trick we first study the embedded discrete time that tracks the locations of the particle 
when the process changes planes. Using the fact that the linear ODEs in each plane are contractions the almost
sure convergence of the backwards version is easy. To get from this to the convergence of our continuous time
process we use a little (Markov)  renewal theory. See Section \ref{sec:Ctoeq} for details.

To get a feel for what the stationary distribution looks like, we turn to simulation. Figure \ref{fig:oneps} 
shows a simulation with $\bar\alpha = 0.3625$, $\bar\beta=0.3074$, $\bar\eta=0.0833$. These values correspond to the system with
$\nu=2$, see the first line in Table \ref{table:simvsth} in Section 5. 

\begin{figure}[h] 
  \centering
  \includegraphics[width=5.67in,height=2.7in,keepaspectratio]{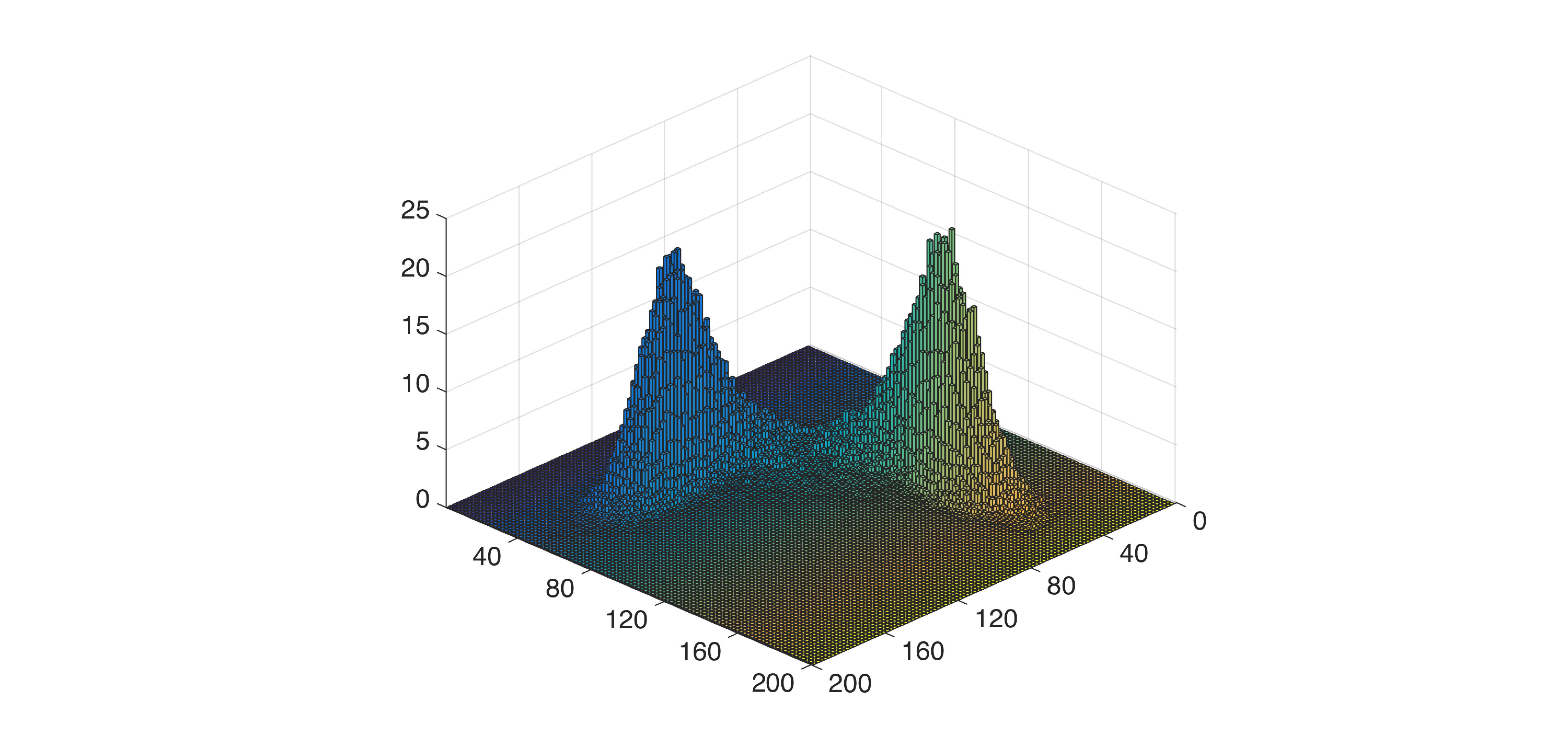}
  \caption{Picture of the stationary distribution for the one particle chain. The curves for 
planes 1 and 0 are plotted on the same graph.}
  \label{fig:oneps}
\end{figure}

\clearp 

\section{Generating Functions, PDE}\label{sec:GFPDE}

Theorem \ref{NLMtoeq} only asserts the existence of a unique stationary distribution in the two plane system, and it does not provide any further information. In this section we begin the process of identifying the limit distribution in Theorem \ref{NLMtoeq}. 
The calculations here are inspired by work of Silk et al.~\cite{Silk}, so we change our dynamics slightly to match theirs.
In our revised dynamics each oriented edge $(x,y)$ is chosen 
at rate 1 then $x$ imitates $y$ with probability $\nu/L$, and $x$ rewires to a new neighbor with probability $1 - {\nu/L}$.  
Let $A_{k,\ell}(t)$ ($B_{k,\ell}(t)$) be fraction of vertices in state 1 (0) with $k$ neighbors in state 1, and $\ell$ neighbors in
state 0 at time $t$. For ease of writing, we suppress the dependence on $t$, and continue to write $A_{k,\ell}$, and $B_{k,\ell}$ instead. Note that with this definition $\sum_{k,\ell} A_{k.\ell} = p$ the fraction of vertices in state 1 (recall at the time scale in which we are interested the fraction of $1$ does not change).

Let $Q(t,x,y) = \sum_{k,\ell} A_{k,\ell} x^k y^\ell$ and $R(t,x,y) = \sum_{k,\ell} B_{k,\ell} x^k y^\ell$.
Writing $Q_x$, $Q_y$, etc for partial derivatives, one can with some patience (see Section \ref{sec:derPDE}) arrive at
\begin{align}
Q_t & =  \frac{\nu}{L}\beta (y-x) Q_x  +  \frac{\nu}{L}xR_x 
\nonumber\\
& + \left( \left[ \frac{\nu}{L} + \alpha \frac{\nu}{L} + p ( 1 - \nu/L) \right] (x-y)  + (1-y) - \frac{\nu}{L} \right) Q_y 
\label{Qeqr}\\
& + ( 1 - \nu/L) \eta (x-1) Q + ( 1 - \nu/L) \eta (y-1) Q. 
\nonumber 
\end{align}
where we have set $\eta = N_{10}/N$.

Writing $q$ for $1-p$, similar reasoning gives
\begin{align}
R_t & =  \left( \left[ \frac{\nu}{L} + \delta \frac{\nu}{L} + q ( 1 - \nu/L) \right] (y-x) + (1-x) -\frac{\nu}{L} \right) R_x 
\label{Reqr}\\
& +  \frac{\nu}{L} yQ_y  + \frac{\nu}{L}\ep (x-y) R_y  + ( 1 - \nu/L) \eta (x-1) R + (1-\nu/L) \eta (y-1) R. 
\nonumber
\end{align}
To go from the $Q$ equation to the $R$ equation, interchange the roles of $x$ and $y$ and change the constants $\alpha \to \delta$,
$\beta \to \ep$, $p \to q$.

Silk et al.~\cite{Silk} considered ``rewire to same.'' {If in their notation we take $p=1-\bar p$ and $\bar p =\nu/L$ then their equation (13) becomes} 
\begin{align*}
Q_t & =  \frac{\nu}{L}\beta (y-x) Q_x  +  \frac{\nu}{L}xR_x \\
& + \left( \left[ 1 + \alpha \frac{\nu}{L} \right] (x-y)  + (1-y) - \frac{\nu}{L} \right) Q_y 
 + ( 1 - \nu/L) \gamma (x-1) Q, \\
R_t & =  \left( \left[ 1 + \delta \frac{\nu}{L} \right] (y-x) + (1-x) -\frac{\nu}{L} \right) R_x \\
& +  \frac{\nu}{L} yQ_y  + \frac{\nu}{L}\ep (x-y) R_y  + ( 1 - \nu/L) \zeta (x-1) R.  
\end{align*}
where $\gamma = Q_y(1,1)/Q(1,1) = N_{10}/N_1$ and $\zeta = N_{10}/N_0$ replace $\eta = N_{10}/N$.
Inside the square brackets in \eqref{Qeqr} and \eqref{Reqr} $N_1/N$ and $N_0/N$ are replaced by 1 
and the last two terms collapse to one in rewire to same, because in that case rewiring cannot cause a 0 to become a neighbor of a 1.

\subsection{Limit as $L\to\infty$} \label{sec:Limit as Ltoinfty}

Let $J_i$ ($K_i$) be the number of neighbors in state 1 (0) for a randomly chosen vertex in state $i$. Now we express the notations $\alpha,\beta, \eta$ etc in terms of functions of $J_i$ and $K_i$. To begin, we note that  $K_1=N_{10}/N_1$, and also $\eta = N_{10}/N$.
\begin{align*}
\alpha & = \frac{R_{xx}(1,1)}{R_x(1,1)} = \frac{E(J_0(J_0-1))}{EJ_0}, \\
\beta  & = \frac{Q_{xy}(1,1)}{Q_x(1,1)} = \frac{EJ_1K_1}{EJ_1}, \qquad
\eta = p EK_1.
\end{align*}
If $p=N_1/N$ is the fraction of vertices in state 1 then as $L \to \infty$
\begin{align*}
& Q(1+a/L,1+b/L) \to U(a,b) = p E \exp( a\bar J_1 +  b\bar K_1 ), \\
& R(1+a/L,1+b/L) \to V(a,b) = (1-p) E \exp( a\bar J_0 +  b\bar K_0 ). 
\end{align*}
where $\bar J_i$ and $\bar K_i$ are the limits in distribution of $J_i/L$ and $K_i/L$.
To derive the partial differential equations that $U$ and $V$ satisfy we note that
$$
U_a(a,b) = p E[\bar J_1 \exp(  a \bar J_1 + b \bar K_1 ) ],
$$
while
\begin{align*}
\frac{1}{L} Q_x (1+a/L,1+b/L) & = \sum_{j,k} \frac{j}{L} (1 + a/L)^{j-1} (1+b/L)^k A_{j,k} \\
& \to  p E[ \bar J_1 \exp( a \bar J_1 + b \bar K_1 ) ]  = U_a(a,b).
\end{align*}
Plugging in $x=1+a/L$, and $y=1+b/L$ in \eqref{Qeqr} and using $p=N_1/N$ we have
\begin{align*}
0 & =  \frac{\nu}{L}\beta \left(\frac{b}{L}-\frac{a}{L} \right) Q_x \left(1+\frac{a}{L},1+\frac{b}{L}\right)  \\
& +  \frac{\nu}{L} (1+a/L) R_x\left(1+\frac{a}{L},1+\frac{b}{L}\right) \\
& + \left( \left[ \frac{\nu}{L} + \alpha \frac{\nu}{L} + p ( 1 - \nu/L) \right]
\left(\frac{a}{L}-\frac{b}{L} \right) 
- \frac{b}{L} -\frac{\nu}{L} \right) Q_y\left(1+\frac{a}{L},1+\frac{b}{L}\right)\\
& + ( 1- \nu/L) \eta \left(  \frac{a}{L} \right) Q\left(1+\frac{a}{L},1+\frac{b}{L}\right) 
 + ( 1- \nu/L) \eta \left(  \frac{b}{L} \right) Q\left(1+\frac{a}{L},1+\frac{b}{L}\right).
\end{align*}
Using $\bar\alpha$, $\bar\beta$ and $\bar\eta$ as the limits of $\alpha/L$, $\beta/L$, $\eta/L$ we have
\beq
0 = \nu \bar\beta (b-a) U_a + \nu V_a + ( [p +\bar\alpha \nu] (a-b) - b - \nu) U_b + \bar\eta a U + \bar\eta b U.
\label{Ueqr}
\eeq 
Similarly,  
\beq
0 = \nu \bar\ep (a-b) V_b + \nu U_b + ( [p +\bar\delta \nu] (b-a) - a - \nu) V_a + \bar\eta a V + \bar\eta b V.
\label{Veqr}
\eeq 
This concludes the derivation of the limiting PDEs.


\section{Analysis for the symmetric case $p=1/2$}\label{sec:phalf}

In Section \ref{sec:GFPDE} we obtained limiting PDEs for the generating functions $Q$ and $R$ of $(A_{k,\ell})_{k,\ell}$, and $(B_{k,\ell})_{k,\ell}$, respectively. In this section our goal is to obtain solutions that satisfy the PDEs \eqref{Ueqr} and \eqref{Veqr}. To this end, we restrict ourselves to the symmetric case, i.e.~$p=1/2$. It will be evident from below that symmetry plays a crucial role in this computation.
In this symmetric case $Q(a,b) = R(b,a)$, so we  look for solutions of the form  
\beq
U(a,b) = \sum_{m,n} c_{m,n} a^m b^n, \qquad V(a,b) = \sum_{m,n} c_{n,m} a^m b^n.
\label{psol}
\eeq
Calculations given in Section \ref{sec:momeq} show that the coefficients satisfy
\begin{align}
0 & = \bar \eta c_{m-1,n} + \bar\eta c_{m,n-1} 
\nonumber\\
& +  \nu\bar\beta c_{m+1,n-1} (m+1) - (\nu\bar\beta m + (3/2 + \nu\bar\alpha) n) c_{m,n}   + (1/2+\nu\bar\alpha) c_{m-1,n+1}(n+1) 
\label{recr}
\\
& + \nu c_{n,m+1}(m+1) - \nu c_{m,n+1} (n+1). \nonumber
\end{align}
Notice that there are terms of order $m+n-1$, $m+n$ and $m+n+1$. Now we try to solve for the unknown coefficients $c_{m,n}$. Since $m! n!c_{m,n}= \partial a^m \partial b^n U(0,0)$, we therefore obtain equations involving different partial derivatives of $U$ evaluated at $(0,0)$. Since all the derivatives are evaluated at $(0,0)$, for convenience in writing, we suppress the argument $(0,0)$.

As we noted earlier, degree is not conserved in the approximate master equation. 
If the average degree in equilibrium is $L$ then when $p=1/2$ the average degrees
of vertices in states $i=1,0$ is $L$ by symmetry and we have
\beq
U_a + U_b = 1/2.
\label{E0}
\eeq
We say that the system is {\it conservative} in this case.

\mn
{\bf Zeroth order.} If we set $a=b=0$ in \eqref{Ueqr} then we get
$$
0 = \nu V_a - \nu U_b.
$$
This holds by symmetry, but is true in general since it says
$(1-p) E\bar J_0 = p E\bar K_1$ or $N_0 LE\bar J_0 = N_1 L E \bar K_1$,
which is true since each side is $N_{10}$.

\mn
{\bf First order.}  If we take $m=1,0$ and $n=1-m$ in \eqref{recr} then we find (see Section \ref{sec:momeq} for details)
that in general 
$$
\bar\eta = U_b,
$$ 
and we have
\beq
0 = -\nu\bar\beta U_a + (1+\nu\bar\alpha) U_b + \nu(U_{bb} - U_{ab}).
\label{E1gen}
\eeq

For self-consistent solutions we can use 
$$
\bar\alpha = \frac{U_{bb}}{U_b} \qquad \text{ and } \qquad \bar\beta = \frac{U_{ab}}{U_a} 
$$
to simplify this to 
$$
0  = - \nu U_{ab}   + U_b + \nu U_{bb}  + \nu U_{bb} - \nu U_{ab},
$$
or rearranging we have
\beq
U_b = 2\nu[U_{ab}-U_{bb}].
\label{E1r}
\eeq
which is a close relative of \eqref{N11eq}.

\mn
{\bf Second order.} Taking $m=2,1,0$ and $n=2-m$ in \eqref{recr} then we conclude (again see Section \ref{sec:momeq} for details)
\begin{align}
&(\nu/2)(U_{aab} -  U_{bbb})  = \bar \eta U_a + U_{ab}/2 + \nu (\bar\alpha U_{ab} - \bar\beta U_{aa} ),   
\nonumber\\
0 & = \bar \eta(U_a+ U_b) -3U_{ab}/2 + U_{bb}/2 + \nu[ \bar\alpha (U_{bb} - U_{ab}) ]  + \bar\beta (U_{aa} - U_{ab}) ], 
\label{E21r}\\
&(\nu/2)(U_{bbb} - U_{aab})  = \bar \eta U_b  - 3U_{bb}/2 + \nu[\bar\beta U_{ab} - \bar\alpha U_{bb}].
\nonumber
\end{align}
adding the equations we get 
\beq
 U_{ab} + U_{bb} = U_b.  \label{E20r}
\eeq

At this point we have four equations for our five unknowns $U_a$, $U_b$, $U_{aa}$, $U_{ab}$ and $U_{bb}$, but this still allows us
to compute all of them in terms of $U_b$. In the self-consistent case using \eqref{E1r} and \eqref{E20r} we get
\begin{align}
U_{ab} = \frac{1}{2} \left( 1 + \frac{1}{2\nu} \right) U_b, 
& \qquad \bar\beta = \frac{U_{ab}}{U_a} = \left( 1 + \frac{1}{2\nu} \right) \frac{U_{b}}{1-2U_b}, 
\label{E2abr} \\
U_{bb} = \frac{1}{2} \left( 1 - \frac{1}{2\nu} \right) U_b, 
& \qquad \bar\alpha = \frac{U_{bb}}{U_b} = \frac{1}{2} \left( 1 - \frac{1}{2\nu} \right).
\label{E2bbr}
\end{align}

To compute $U_{aa}$ now, we note that $\bar\alpha=U_{bb}/U_b$ and $U_{ab} - U_{bb} = U_b/2\nu$ so $\nu\bar\alpha(U_{bb}-U_{ab})=-U_{bb}/2$
and \eqref{E21r} simplifies to
$$
0  = \bar\eta/2 - 3U_{ab}/2 +  \nu [ \bar\beta (U_{aa} - U_{ab}) ].
$$
Rearranging
$$
- \bar\eta = 2\nu\bar\beta U_{aa} - (2 \nu\bar\beta + 3) U_{ab},
$$
so we have 
\beq 
U_{aa} = \left( 1 + \frac{3}{2\nu\bar\beta} \right) U_{ab} - \frac{\bar\eta}{2\nu\bar\beta}
\label{E23r}
\eeq

To get a sense of the accuracy of the approximate master equation we simulate the system with $N=1600$, $L=40$ {to find $U_b$
then use the equations \eqref{E2abr}, \eqref{E2bbr}, and \eqref{E23r}.
The predicted values of $U_{ab}$ and $U_{bb}$ given in Table \ref{table:simvsth}} agree well with those from simulation, having errors that are mostly
about 1\%. However the predictions for $U_{aa}$ have errors of about 10\%, showing that 1's are more clustered
than the approximate master equation predicts.
 

\begin{table}[htbp]
\begin{center}
\begin{tabular}{l|c|cc|cc|cc}
$\nu$ & $U_b$ sim & $U_{ab}$ sim & \eqref{E2abr} & $U_{bb}$ sim & \eqref{E2bbr} & $U_{aa}$ sim & \eqref{E23r} \\
\hline
2 & 0.1666 & 0.1025 & 0.1041 & 0.0604 & 0.0625 & 0.2336 & 0.2208 \\
1.6 & 0.1371 & 0.0907 & 0.0900 & 0.0466 & 0.0471 & 0.2859 & 0.2574 \\
1.44 & 0.1216 & 0.0827 & 0.0819 & 0.0394 & 0.0397 & 0.3115 & 0.2810 \\
1.32 & 0.1094 & 0.0757 & 0.0754 & 0.0343 & 0.0340 & 0.3310 & 0.3047 \\
1.2 & 0.0896 & 0.0641 & 0.0635 & 0.0264 & 0.0261 & 0.3735 & 0.3351 \\
1 & 0.0454 & 0.0339 & 0.0341 & 0.0132 & 0.0113 & 0.4690 & 0.4129
\end{tabular} 
\caption{Simulation of evolving voter model compared with computations for the approximate mater equation.}
\label{table:simvsth}
\end{center} 
\end{table}

If we go to third order then we have four new equations, see \eqref{order3} but we have three new equations for four new unknowns
so we are falling further behind. Despite this fact, as Silk et al.~\cite{Silk} explain, it is possible in the symmetric case to compute generating function and find
``self-consistent solutions,'' i.e., those that have the property that if we set the values of $\bar\alpha$, $\bar\beta$, and $\bar\eta$
and then compute the values of $U_{bb}/U_b$, $U_{ab}/U_a$, and $U_b$ they agree with the specified parameters. To do this they note that
if one specifies the values of the $U_{aaab} \ldots U_{bbbb}$ then one can solve for the lower order $U$'s then one has a fourth order
approximation to the solution. If we do the $n$th order approximation, choose the $n$th order variables so that $U_a + U_b =1/2$, and let $n\to\infty$ then the limit exists. They take the eighth order approximation and then use symbolic computation to find self-consistent values 
of $\bar\alpha$, $\bar\beta$, and $\bar\eta$. See their paper for results for rewire to same. 

Since this is beyond our computer skills we leave this as an exercise for more capable readers. We would be more excited if this method
could be used to get results for the general case, however symmetry seems crucial to the computation. 

\clearp

\section{Derivation of the Equations} \label{sec:dereq}

This section provides the derivation of the equations \eqref{N10eq}-\eqref{N00eq}. To this end, we begin by writing the equations in the notation of \cite{evo8}, i.e., $\alpha$ is the rewiring rate not the quantity
$N_{101}/N_{10}$ introduced in the discussion of the approximate master equation. 

\mn
{\bf Rewire-to-random.} We fix $(x,y)$ and consider $(d/dt) 1_{\{\xi(x)=i, \xi(y)=j\}}$ for all possible $i,j$.
To do this we consider the various possibilities for the oriented edge $(u,v)$ and which the 
update occurs and whether the even is voting or rewriting.

\mn
I. {\bf Pairs destroyed by rewiring.}

\mn
\begin{tabular}{cccc}
$u=x$ & $v=y$  & rate & destroy \\
 1 & 0 & $\alpha N_{10}$ & 10 \\
 0 & 1 & $\alpha N_{10}$ & 01 \\
$u=y$ & $v=x$  & rate & destroy \\
1 & 0 &  $\alpha N_{10}$ & 01 \\
0 & 1 & $\alpha N_{10}$ & 10
\end{tabular}

\mn
II. {\bf Pairs created by rewiring.} 

\mn
\begin{tabular}{ccccc}
$u=x$ & $v$ & $v'=y$ & rate & create \\
 1 & 0 & 0 & $\alpha N_{10}(1-p)$ & 10 \\
1 & 0 & 1 & $\alpha N_{10}p$ & 11 \\
 0 & 1 & 0 & $\alpha N_{10}(1-p)$ & 00 \\
0 & 1 & 1 & $\alpha N_{10}p$ & 01 \\
$u=y$ & $v$ & $v'=x$ & rate & create \\
 1 & 0 & 0 & $\alpha N_{10}(1-p)$ & 01 \\
1 & 0 & 1 & $\alpha N_{10}p$ & 11 \\
 0 & 1 & 0 & $\alpha N_{10}(1-p)$ & 00 \\
0 & 1 & 1 & $\alpha N_{10}p$ & 10 
\end{tabular}

\mn
III. {\bf Internal voting on $(x,y)$.}

\begin{tabular}{lccc}
10 vote & rate &  destroy & create \\                       
$uv$ & $(1-\alpha) N_{10}$ & 10  & 11 \\
$vu$ & $(1-\alpha) N_{01}$ & 01  & 11 \\
01 vote & rate &  destroy & create \\                       
$uv$ & $(1-\alpha) N_{01}$ & 01  & 00 \\
$vu$ & $(1-\alpha) N_{10}$ & 10  & 00 
\end{tabular}

\mn
IV. {\bf External Voting}

\mn
\begin{tabular}{ccccc}
$u \sim x = v$, & y   & rate & create & destroy \\
10 & 0 & $(1-\alpha)N_{100}$ & 10 & 00 \\
10 & 1 & $(1-\alpha)N_{101}$ & 11 & 01 \\
01 & 0 & $(1-\alpha)N_{010}$ & 00 & 10 \\
01 & 1 & $(1-\alpha)N_{011}$ & 01 & 11 
\end{tabular}

\mn
\begin{tabular}{ccccc}
$u \sim y = v$ & x  & rate & create & destroy \\
10 & 0 &  $(1-\alpha)N_{001}$ & 01 & 00 \\
10 & 1 & $(1-\alpha)N_{101}$ & 11 & 10 \\
01 & 0 & $(1-\alpha)N_{010}$ & 00 & 01 \\
01 & 1 & $(1-\alpha)N_{110}$ & 10 & 11 
\end{tabular}

\mn
Adding up the rates from the tables gives the following equations. Note that $N_{11}$, $N_{00}$, $N_{111}$ and $N_{000}$ do not appear on the right-hand side. Noting that $N_{110}=N_{011}$ and $N_{100}=N_{001}$ we have
\begin{align}
\frac{dN_{10}}{dt} & = -\alpha N_{10} + (1-\alpha) [-2 N_{10} + N_{100} - N_{010} + N_{110} - N_{101}], 
\nonumber\\ 
\frac{1}{2}\frac{dN_{11}}{dt} & = \alpha p N_{10} + (1-\alpha) [N_{10} + N_{101} - N_{011}], 
\label{rrNeq}\\
\frac{1}{2}\frac{dN_{00}}{dt} & = \alpha(1-p) N_{10} + (1-\alpha) [N_{10} + N_{010} - N_{100}].
\nonumber
\end{align}

\mn
We have separated the rewiring terms I+II multiplied by $\alpha$ from the voting terms III+IV
multiplied by $1-\alpha$. Simplifying gives 
\begin{align*}
\frac{dN_{10}}{dt} & = -(2-\alpha) N_{10} + (1-\alpha) [N_{100} - N_{010} + N_{110} - N_{101}], \\ 
\frac{1}{2}\frac{dN_{11}}{dt} & = (1-\alpha(1-p)) N_{10} + (1-\alpha) [N_{101} - N_{011}], \\
\frac{1}{2}\frac{dN_{00}}{dt} & = (1-\alpha p) N_{10} + (1-\alpha) [N_{010} - N_{100}].
\end{align*}
Let $d_x$ be the degree of $x$ and $M = \sum_x d_x$ be the number of oriented edges. Note that 
$$
N_{11} + 2N_{10} + N_{00} = \sum_x d_x = M \sim N^2p
$$
so the sum of the three equations must be 0 (and it is).

When $1-\alpha = \nu/L$ we have
\begin{align}
\frac{dN_{10}}{dt} & = - N_{10} + \frac{\nu}{L} [N_{100} - N_{010} + N_{110} - N_{101}], 
\nonumber\\ 
\frac{1}{2}\frac{dN_{11}}{dt} & = p N_{10} + \frac{\nu}{L} [N_{101} - N_{011}], 
\label{rrNeq2}\\
\frac{1}{2}\frac{dN_{00}}{dt} & = (1-p) N_{10} + \frac{\nu}{L} [N_{010} - N_{100}].
\nonumber
\end{align}

\clearp

\section{Pair approximation} \label{sec:PT1}
In this section we derive \eqref{paj}-\eqref{pak}, namely the means of $1$ neighbor, and $0$ neighbor under the pair approximation. To this end, we recall \eqref{paN11} and \eqref{paN00}
\begin{align}
\frac{1}{2}\frac{dN_{11}}{dt} & = p N_{10} + \frac{\nu}{L} [N_{10}J_0 - N_{01}J_1], 
\label{paN11copy}\\
\frac{1}{2}\frac{dN_{00}}{dt} & = (1-p) N_{10} + \frac{\nu}{L} [N_{01}K_1 - N_{10} K_0].
\label{paN00copy}
\end{align}
In equilibrium we have
$$
p + \frac{\nu}{L} (J_0-J_1) = 0, \qquad (1-p) + \frac{\nu}{L} (K_1-K_0) = 0. 
$$
or rearranging 
\beq
J_1 - J_0 = \frac{Lp}{\nu}, \qquad K_0 - K_1 =  \frac{L(1-p)}{\nu}.
\label{e1}
\eeq
To have four equations we recall that 
\begin{align}
&p K_1=(1-p)J_0,
\label{e2}\\
&p(J_1+K_1) + (1-p)(J_0+K_0) = L.
\label{e3}
\end{align}

To simplify the equations we begin by noting that using \eqref{e3} with \eqref{e1} gives
\begin{center}
\begin{tabular}{ccccl}
$p J_1$ & $+p K_1$ & $+(1-p)J_0$ & $+(1-p)K_0$ & $=L$ \\
$(1-p)J_1$ & & $-(1-p) J_0$ & & $=Lp(1-p)/\nu$\\
& $-p K_1$ & & $+p K_0$ &  $=Lp(1-p)/\nu$
\end{tabular}
\end{center}
so we have
\beq
J_1 + K_0 = L + \frac{2Lp(1-p)}{\nu}.
\label{e4}
\eeq
Adding the equations in \eqref{e1}
$$
J_1 + K_0 - J_0 - K_1 = \frac{Lp}{\nu} + \frac{L(1-p)}{\nu}.
$$
Using \eqref{e4}
$$
L + \frac{Lp(1-p)}{\nu} + \frac{Lp(1-p)}{\nu} = J_0 + K_1 +\frac{Lp}{\nu} + \frac{L(1-p)}{\nu},
$$
so we have
$$
L - \frac{Lp^2}{\nu} - \frac{L(1-p)^2}{\nu}= J_0 + K_1 .
$$
Using \eqref{e2} now and noting $J_0 + (1-p)J_0/p = J_0/p$ we have that in equilibrium
\begin{align}
J_0^* & = L \left( 1 - \frac{p^2 +(1-p)^2}{\nu} \right) p, 
\label{j0*}\\
K_1^* & = L \left( 1 - \frac{p^2 +(1-p)^2}{\nu} \right) (1-p).
\label{k1*}
\end{align}
To finish up we note that from \eqref{e1}
\begin{align}
J_1^* = J_0^* + \frac{Lp}{\nu} & = L \left( 1 + \frac{1}{\nu} - \frac{p^2 +(1-p)^2}{\nu} \right) p,
\label{j1*} \\
K_0^* = K_1^* + \frac{L(1-p)}{\nu} & = L \left( 1 + \frac{1}{\nu} - \frac{p^2 +(1-p)^2}{\nu} \right) (1-p).
\label{k0*} 
\end{align}
For $J_0^*,K_1^*>0$ we must have $\nu > \nu_c(p) \equiv p^2 + (1-p)^2$. In this case we will also have
$J_1^*, K_0^*>0$. To begin to check \eqref{e3} we note that
\begin{align*}
J_1^* + K_1^* & = L \left( 1 + \frac{p}{\nu} - \frac{p^2 +(1-p)^2}{\nu} \right),  \\
J_0^* + K_0^* & = L \left( 1 + \frac{(1-p)}{\nu} - \frac{p^2+(1-p)^2}{\nu} \right).
\end{align*}
so we do have $p(J_1^*+K_1^*) + (1-p)(J_0^*+K_0^*) = L$.

\clearp

\section{Analysis of the single plane ODEs} \label{sec:ODE}
Recall that in Section \ref{sec:AME} we introduced approximate master equation, where we represent the states of a local neighborhood of a vertex by a triplet. Namely, in $(i,j,k)$  $i$ represents the state of the focal vertex, $j$ is the number of $1$ neighbors, and $k$ is the same for $0$ neighbors. This gives rise to a two plane system, and we claimed in Theorem \ref{NLMtoeq} that this two plane system has a unique stationary distribution. To prove Theorem \ref{NLMtoeq}, we first need to show that the sets of differential equations in the individual planes are globally attractive, which is done in this section. Building on this, we finish the proof of Theorem \ref{NLMtoeq} in the next section.

To this end, using \eqref{grj1}, \eqref{grk1} and subtracting
\begin{align*}
0 & =  \eta \hphantom{-\,k_1\,} + p k^*_1 + \frac{\nu}{L} \alpha k^*_1 
- \frac{\nu}{L} \beta j^*_1,\\
0 &  = \eta -k^*_1 - p k^*_1 - \frac{\nu}{L}\alpha k^*_1 
+ \frac{\nu}{L} \beta j^*_1,
\end{align*}
we conclude:
\begin{align*}
\frac{d(j_1-j_1^*)}{dt} & =  -  \frac{\nu}{L} \alpha(j_1 - j_1^*) 
+ \left(  p  + \frac{\nu}{L}\beta \right) (k_1-k_1^*), 
\\
\frac{d(k_1-k_1^*)}{dt} &  = \frac{\nu}{L} \alpha(j_1 - j_1^*) 
- \left(1+p+ \frac{\nu}{L}\beta \right) (k_1-k_1^*),
\end{align*}
Scaling by $L$ and letting $L\to\infty$
\begin{align*}
\frac{d(x_1-x_1^*)}{dt} & =  -  \nu\bar\alpha(x_1 - x_1^*) 
+ (  p  + \nu\bar\beta) (y_1-y_1^*), 
\\
\frac{d(y_1-y_1^*)}{dt} &  = \nu\bar\alpha(x_1 - x_1^*) 
- (1+p+ \nu\bar\beta) (y_1-y_1^*),
\end{align*}

Either equation can be written in matrix form as
$$
\begin{pmatrix}
u_1(t)\\
v_1(t)
\end{pmatrix}= A \begin{pmatrix} 
u_1(0)\\
v_1(0)
\end{pmatrix},
$$
where
$$
A= \begin{bmatrix}
- a & p+ b \\
a  & - ( 1+ p + b)
\end{bmatrix}.
$$
with $a,b>0$ so the solution is
$$
\begin{pmatrix}
u_1(t)\\
v_1(t)
\end{pmatrix}= \exp[ A t] \begin{pmatrix} 
u_1(0)\\
v_1(0)
\end{pmatrix},
$$
The trace of $A$ is $-(1 + p + a + b) < 0$ while the determinant is $a > 0$, so it is clear that both eigenvalues have negative real part.
To show that they are real we note that they satisfy
\beq
\det(\lambda I - A ) = \lambda^2 + (1 + p + a + b) \lambda + a = 0.
\label{eveq}
\eeq
Solving the quadratic equation we have
\beq
\lambda_i = \frac{-(1 + p + a + b) \pm \sqrt{ (1 + p + a + b)^2 - 4a }}{2}.
\label{lami}
\eeq
To show that the quantity under the square root is positive  we note that if $r = 1 + p + b$  then
$$
(1 + p + a + b)^2 - 4a  = (r+a)^2 - 4a = (r-a)^2 + 4ra - 4a > 0
$$
since $a > 0$ and $r> 1$.

\clearp

\section{AME: Convergence to Equilibrium} \label{sec:Ctoeq}
Building on the results of Section \ref{sec:ODE} we now finish the proof of Theorem \ref{NLMtoeq}. Recall that the set of equation for plane $i$ can be written in the following form:
$$
\begin{pmatrix}
u_i(t)\\
v_i(t)
\end{pmatrix}= \exp[ A_i t] \begin{pmatrix} 
u_i(0)\\
v_i(0)
\end{pmatrix},
$$
where both the eigenvalues of $A_i$ are real negative, and hence there exists a unique solution of the differential equation in plane $i$ starting from $z = (x_i,y_i)$, which is denoted hereafter by 
$\Phi^i_t(z)$.
It is clear that $t \to \Phi^i_t(z)$ are continuous. The matrix representations imply that if $t>0$
\beq
|\Phi^i_t(z) - \Phi^i_t(w)| \le K_i(t)|z-w|
\label{Phicon}
\eeq
with $K_i(t)<1$. Note that $K_i(s+t) \le K_i(s)K_i(t)$.

Let $T_i(z)$ be the time of the first jump to the other plane when the solution starts at $z$ in plane $i$. 
\cite{LMR} study the situation in which there are two differential equations and the $k$th pair of switching times between
them $(\tau_0^k, \tau_1^k)$ are independent and drawn from a distribution $\mu_0 \times \mu_1$.
In our situation the jump times depend on the starting point, but their method of proof extends easily. 
Let $F_{i,z}$ be the distribution of $T_i(z)$, let $U^1_k, U^0_k$ be i.i.d~uniform on $(0,1)$ and let
$\tau^k_i(z) = F^{-1}_{i,z}(U^k_i)$ which has the same distribution as $T_i(z)$. 
Define the compositions
\begin{align*}
G^k_\omega(z) = \Phi^1_{\tau^k_1(w)} (\Phi^0_{\tau^k_0(z)}(z)) \quad\hbox{where}\quad  w =  \Phi^0_{\tau^k_0(z)}(z), \\
H^k_\omega(z) = \Phi^0_{\tau^k_0(w)} (\Phi^1_{\tau^k_1(z)}(z)) \quad\hbox{where}\quad  w =  \Phi^1_{\tau^k_1(z)}(z).
\end{align*}

Define the forward maps $\phi^n$ and  $\gamma^n$ and the backwards maps $\phi^{-n}$ and  $\gamma^{-n}$ by
\begin{align*}
\phi^n_\omega(z) = G^n_\omega \circ \cdots \circ G^1_\omega(z), & \quad\hbox{and}\quad \gamma^n_\omega(z) = H^n_\omega \circ \cdots \circ H^1_\omega(z), \\
\phi^{-n}_\omega(z) = G^1_\omega \circ \cdots \circ G^n_\omega(z), & \quad\hbox{and}\quad \gamma^{-n}_\omega(z) = H^1_\omega \circ \cdots \circ H^n_\omega(z). 
\end{align*}
This is a well-known trick in the theory of iterated functions. The functions $\phi_n$ and $\phi_{-n}$ have the same distribution but $\phi_{-n}$s admit a almost sure limit:

\begin{lemma}
$Y_1(\omega) = \lim_{n\to\infty} \phi^{-n}_\omega(z)$ and $Y_0(\omega) = \lim_{n\to\infty} \gamma^{-n}_\omega(z)$
exist almost surely and are independent of $z$.
\end{lemma}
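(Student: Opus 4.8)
The plan is to run the ``pulled-back iteration'' argument that is standard for iterated random maps and that underlies \cite{LMR}: although the forward maps $\phi^n_\omega$ need not settle down, the backward maps $\phi^{-n}_\omega=G^1_\omega\circ\cdots\circ G^n_\omega$ do, because each cycle map $G^k_\omega$ is a contraction and inserting a contraction on the \emph{inside} of a composition only shrinks distances. First I would fix a compact set $\mathcal D$ that is forward-invariant for both single-plane flows $\Phi^0_t,\Phi^1_t$ and for the plane-switching jumps (which leave the coordinates $(x,y)$ unchanged), and that is bounded away from the coordinate axes. Such a $\mathcal D$ exists by the analysis of Section \ref{sec:ODE}: each plane has a globally attracting fixed point, so all orbits are bounded, and the vector field points strictly inward on every coordinate face of a box $\{\,c\le x_i,\ c\le y_i,\ x_i+y_i\le R\,\}$ once $c$ is small and $R$ large -- e.g.\ $\dot x_1=\bar\eta+(p+\nu\bar\alpha)y_1-\nu\bar\beta x_1\ge\bar\eta-\nu\bar\beta c>0$ on the face $\{x_1=c\}$, and similarly for the other three faces in each plane. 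Keeping $\mathcal D$ off the axes is what makes the switching rates $\nu y_1$ and $\nu x_0$ bounded both above and below on $\mathcal D$, which we will need below; we may assume the particle takes values in $\mathcal D$. Since $\Phi^i_t(\mathcal D)\subseteq\mathcal D$, the sets $\phi^{-n}_\omega(\mathcal D)$ form a decreasing sequence of nonempty compacts, so $\bigcap_n\phi^{-n}_\omega(\mathcal D)$ is a nonempty compact set, and the whole lemma reduces to the single claim that $\operatorname{diam}\phi^{-n}_\omega(\mathcal D)\to 0$ almost surely: granting it, $\bigcap_n\phi^{-n}_\omega(\mathcal D)=\{Y_1(\omega)\}$, and since $\phi^{-n}_\omega(z)\in\phi^{-n}_\omega(\mathcal D)\ni Y_1(\omega)$ we get $\phi^{-n}_\omega(z)\to Y_1(\omega)$ for every $z$, independently of $z$. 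Measurability of $Y_1$ is clear from the construction, and $\gamma^{-n}_\omega\to Y_0(\omega)$ is the mirror-image statement obtained by swapping the two planes.

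To prove $\operatorname{diam}\phi^{-n}_\omega(\mathcal D)\to 0$ I would follow \cite{LMR}. By \eqref{Phicon} the flows are strict contractions for any fixed positive time, with factor $K_i(t)\to 0$ as $t\to\infty$, so a cycle in which the trajectory spends a long time in each plane contracts $\mathcal D$ by a fixed factor $\kappa<1$. Let $A_k$ be the event -- depending only on the $k$-th pair of driving uniforms $(U^k_0,U^k_1)$, hence i.i.d.\ in $k$ -- that both uniforms exceed $1-e^{-\Lambda T_*}$, where $\Lambda$ is the supremum of the switching rate on $\mathcal D$ and $T_*$ a large constant. Since $P(T_i(z)\le T_*)\le 1-e^{-\Lambda T_*}$ uniformly over $z\in\mathcal D$, on $A_k$ the $k$-th cycle spends time $\ge T_*$ in each plane, and one checks (i) that $G^k_\omega$ then maps $\mathcal D$ into a set of diameter $\le\kappa\operatorname{diam}\mathcal D$ with $\kappa<1$ once $T_*$ is large, and (ii) that on any cycle $G^k_\omega$ at least does not expand distances on $\mathcal D$, measured in a norm adapted to the linear flows of Section \ref{sec:ODE}. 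Since $\sum_k P(A_k)=\infty$ and the $A_k$ are independent, Borel--Cantelli gives $\#\{k\le n:A_k\}\to\infty$ a.s., whence $\operatorname{diam}\phi^{-n}_\omega(\mathcal D)\le\kappa^{\#\{k\le n:\,A_k\}}\operatorname{diam}\mathcal D\to 0$ a.s.

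The crux -- and the only place where our situation genuinely differs from \cite{LMR} -- lies in the two contraction estimates (i) and (ii) of the previous paragraph: in \cite{LMR} the switching times are drawn from a fixed law, so the cycle map is literally a composition of two fixed linear contractions and there is nothing to check, whereas here $T_i(z)=F_{i,z}^{-1}(U^k_i)$ depends on the starting point $z$, so $G^k_\omega$ also contains a term measuring how the switch time moves with $z$, and this term must be controlled. Confining the dynamics to a domain $\mathcal D$ bounded away from the coordinate axes is exactly what makes this possible, since it bounds the relevant jump rate away from $0$ and hence makes the switch time a uniformly Lipschitz function of the starting point on $\mathcal D$; one then has to verify that the decay $K_i(T_*)\to 0$ coming from long sojourns beats the loss coming from this position-dependence, which is a matter of choosing the constants $c,R,T_*$ together with a Lyapunov-adapted norm for the linear flows of Section \ref{sec:ODE}. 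Once these two uniform estimates are in hand, the rest -- the nested-compacts reduction, the i.i.d.\ structure of the events $A_k$, and the Borel--Cantelli bookkeeping -- is routine.
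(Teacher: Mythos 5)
Your overall strategy coincides with the paper's: the paper's entire proof of this lemma is the remark that it ``follows easily from the contraction property in \eqref{Phicon}, see the proof of Proposition 1 in \cite{LMR}'', i.e., exactly the backward-iteration argument you run (nested images of a forward-invariant compact set, plus infinitely many strictly contracting cycles). You go further than the paper in one important respect: you isolate the point where this setting genuinely differs from \cite{LMR} --- the switch times $\tau^k_i(z)=F^{-1}_{i,z}(U^k_i)$ depend on the starting point, so a cycle map $G^k_\omega$ is \emph{not} a composition of two fixed contractions --- which the paper dismisses with ``their method of proof extends easily.'' Your event $A_k$, its i.i.d.\ structure, and the uniform lower bound $P(T_i(z)>T_*)\ge e^{-\Lambda T_*}$ on $\mathcal D$ are all correct, and your estimate (i) does go through, because for long sojourns both the contraction factor \emph{and} the late-time flow speed are controlled by $K_i(T_*)$.

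The genuine gap is your step (ii), the claim that \emph{every} cycle map is non-expanding on $\mathcal D$ in some adapted norm; this is asserted, not proved, and it is exactly where the $z$-dependence bites. Writing $w=\Phi^0_{\tau_0(z)}(z)$ and $w'=\Phi^0_{\tau_0(z')}(z')$, the natural bound is
$$
|w-w'|\ \le\ K_0(\tau_{\min})\,|z-z'|\;+\;\sup_{s}\Bigl|\tfrac{d}{ds}\Phi^0_s(z')\Bigr|\cdot|\tau_0(z)-\tau_0(z')|,
$$
and for short sojourns $K_0(\tau_{\min})$ is close to $1$ while the cross term is of order $|z-z'|$ times a fixed constant built from $\|A_0\|$, the size of $\mathcal D$, and the Lipschitz constant of $z\mapsto\tau_0(z)$; no choice of norm forces that constant below $1-K_0(\tau_{\min})$. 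So short-sojourn cycles can expand, the bound $\operatorname{diam}\phi^{-n}_\omega(\mathcal D)\le\kappa^{\#\{k\le n:A_k\}}\operatorname{diam}\mathcal D$ does not follow, and the Borel--Cantelli bookkeeping does not close. What does survive is an estimate of the form $\operatorname{Lip}(G^k_\omega|_{\mathcal D})\le C\,K_0\bigl(\inf_z\tau^k_0(z)\bigr)K_1\bigl(\inf_z\tau^k_1(z)\bigr)$ with a fixed $C>1$, and since the maps $G^k_\omega$ are i.i.d.\ the correct statement to aim for is $E\log\operatorname{Lip}(G^1_\omega)<0$ together with the strong law, in the style of iterated random functions. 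But that inequality is a genuine quantitative comparison between the spectral gaps of the matrices in Section \ref{sec:ODE}, the geometry of $\mathcal D$, and the Lipschitz constants of the switch times --- precisely the estimate your final paragraph defers to ``a matter of choosing the constants.'' Until it is supplied the argument is incomplete; to be fair, the paper's own one-line proof leaves exactly the same point unaddressed.
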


\noindent
This follows easily from the contraction property in \eqref{Phicon}, see the proof of Proposition 1 in \cite{LMR}.

$G^1_\omega(z)$ gives the location of the path on its first return to plane 0, so $Y_1$ gives the equilibrium distribution at that time.
Likewise, $H^1_\omega(z)$ gives the location of the path on its first return to plane 1, so $Y_0$ gives the equilibrium distribution at that time.
It follows easily from the existence of the limit that (this is Proposition 2 in \cite{LMR})

\begin{lemma}
$Y_0 =_d \Phi^0_{\tau_0(Y_1)}(Y_1)$ and $Y_1 =_d \Phi^1_{\tau_1(Y_0)}(Y_0)$.
\end{lemma}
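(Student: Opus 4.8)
\mn
\textbf{Proof proposal.} The plan is to follow the proof of Proposition 2 of \cite{LMR}, feeding in the almost sure backward limits from the preceding lemma together with a one-step shift identity. Write $\theta$ for the shift on $\omega$ that relabels the driving uniforms, $U^k_i(\theta\omega)=U^{k+1}_i(\omega)$, so that $G^k_{\theta\omega}=G^{k+1}_\omega$ and $H^k_{\theta\omega}=H^{k+1}_\omega$. Directly from the definitions of the backward compositions, $\phi^{-n}_\omega=G^1_\omega\circ\phi^{-(n-1)}_{\theta\omega}$ and $\gamma^{-n}_\omega=H^1_\omega\circ\gamma^{-(n-1)}_{\theta\omega}$. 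Letting $n\to\infty$, using that $G^1_\omega$, $H^1_\omega$ are continuous and that the backward limits exist (preceding lemma), we obtain the fixed-point identities
$$
Y_1(\omega)=G^1_\omega\big(Y_1(\theta\omega)\big),\qquad Y_0(\omega)=H^1_\omega\big(Y_0(\theta\omega)\big),
$$
where $Y_1(\theta\omega)\eqd Y_1$, $Y_0(\theta\omega)\eqd Y_0$, and $Y_1(\theta\omega)$ (resp.\ $Y_0(\theta\omega)$) is independent of the randomness entering $G^1_\omega$ (resp.\ $H^1_\omega$).

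Next I would break each one-step map into its two half-excursions. Introduce the auxiliary maps $P^k_\omega(z)=\Phi^0_{\tau^k_0(z)}(z)$ and $Q^k_\omega(z)=\Phi^1_{\tau^k_1(z)}(z)$ --- a single plane-$0$ excursion of length $\tau^k_0$ and a single plane-$1$ excursion of length $\tau^k_1$. Checking against the definitions of $G^k_\omega$ and $H^k_\omega$ gives $G^k_\omega=Q^k_\omega\circ P^k_\omega$ and $H^k_\omega=P^k_\omega\circ Q^k_\omega$. Plugging the first into the identity for $Y_1$ and setting $Z(\omega):=P^1_\omega\big(Y_1(\theta\omega)\big)$ gives $Y_1(\omega)=Q^1_\omega(Z(\omega))$, while
$$
Z(\omega)=\Phi^0_{\tau^1_0(Y_1(\theta\omega))}\big(Y_1(\theta\omega)\big).
$$
Since $Y_1(\theta\omega)$ is a function of $(U^k_i)_{k\ge2}$ alone it is independent of $U^1_0$, and $\tau^1_0(\cdot)=F^{-1}_{0,\cdot}(U^1_0)$; hence, conditionally on $Y_1(\theta\omega)$, the time $\tau^1_0(Y_1(\theta\omega))$ has law $F_{0,Y_1(\theta\omega)}$, i.e.\ that of $T_0(Y_1(\theta\omega))$. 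As $Y_1(\theta\omega)\eqd Y_1$ this yields $Z\eqd\Phi^0_{\tau_0(Y_1)}(Y_1)$.

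It remains to show $Z\eqd Y_0$. Here I would apply the preceding lemma to $\theta\omega$, namely $Y_1(\theta\omega)=\lim_n G^2_\omega\circ\cdots\circ G^{n+1}_\omega(z)$, and use continuity of $P^1_\omega$, to write $Z(\omega)=\lim_n P^1_\omega\circ G^2_\omega\circ\cdots\circ G^{n+1}_\omega(z)$. Expanding each $G^k_\omega=Q^k_\omega\circ P^k_\omega$ and regrouping,
$$
P^1_\omega\circ G^2_\omega\circ\cdots\circ G^{n+1}_\omega=\tilde H^1_\omega\circ\tilde H^2_\omega\circ\cdots\circ\tilde H^n_\omega\circ P^{n+1}_\omega,\qquad \tilde H^k_\omega:=P^k_\omega\circ Q^{k+1}_\omega .
$$
One checks that $\tilde H^k_\omega$ has exactly the form of $H^k_\omega$ (a plane-$1$ excursion followed by a plane-$0$ excursion) and, since $\tilde H^k_\omega$ depends only on the pair $(U^k_0,U^{k+1}_1)$ and these pairs are disjoint in $k$, that $(\tilde H^k_\omega)_{k\ge1}$ is i.i.d.\ with the same law as $(H^k_\omega)_{k\ge1}$. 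The trailing point $P^{n+1}_\omega(z)$ stays in the bounded region left invariant by the flows, so by the contraction bound \eqref{Phicon} it may be replaced by any fixed $z'$ without affecting the limit; by the same reasoning as in the preceding lemma the backward composition of the i.i.d.\ maps $\tilde H^k_\omega$ converges almost surely, independently of $z'$, and its law equals that of $\lim_n H^1_\omega\circ\cdots\circ H^n_\omega(z')=Y_0$. Hence $Z\eqd Y_0$.

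Combining the two displayed conclusions gives $Y_0\eqd\Phi^0_{\tau_0(Y_1)}(Y_1)$, and the companion identity $Y_1\eqd\Phi^1_{\tau_1(Y_0)}(Y_0)$ follows by interchanging the two planes (equivalently $G\leftrightarrow H$, $\phi\leftrightarrow\gamma$, $P\leftrightarrow Q$). The step I expect to be most delicate is the regrouping in the last paragraph: one has to verify carefully that the shifted half-steps $\tilde H^k_\omega$ again form an i.i.d.\ sequence with the correct marginal law and that the boundary factor $P^{n+1}_\omega(z)$ is asymptotically irrelevant, for which the uniform contraction \eqref{Phicon} on a bounded invariant region is precisely the tool needed; everything else is bookkeeping with the shift $\theta$ and the decomposition $G^k_\omega=Q^k_\omega\circ P^k_\omega$.
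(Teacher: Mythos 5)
Your proposal is correct and follows essentially the same route as the paper, which simply cites Proposition~2 of \cite{LMR} for this lemma: the one-step shift identity $Y_1(\omega)=G^1_\omega(Y_1(\theta\omega))$, the decomposition $G^k_\omega=Q^k_\omega\circ P^k_\omega$, and the regrouping into an i.i.d.\ backward composition of $H$-type maps is exactly the standard backward-iteration argument being invoked there. You have in effect supplied the details that the paper leaves to the reference, including the two points it glosses over (independence of $Y_1(\theta\omega)$ from $U^1_0$, and the asymptotic irrelevance of the trailing factor $P^{n+1}_\omega(z)$ via the contraction \eqref{Phicon}).
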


The average time spent in plane 0 is $\nu_0 = E \tau_0(Y_1)$. The average time spent in plane 1 is $\nu_1 = E \tau_1(Y_0)$.
Once we show these are finite we can conclude that the long run fraction of time spent in plane 1 is $\nu_1/(\nu_1+\nu_0)$.
To compute the limiting behavior of the continuous time process the following picture is useful.

\begin{center}
\begin{picture}(285,70)
\put(30,30){\line(1,0){225}}
\put(28,27){$\bullet$}
\put(25,40){$Y^1_0$}
\put(50,50){$t^1_1$}
\put(17,15){$\sigma_0=0$}
\put(78,27){$\bullet$}
\put(75,40){$Y^1_1$}
\put(105,50){$t^1_0$}
\put(138,27){$\bullet$}
\put(135,40){$Y^2_0$}
\put(157,50){$t^2_1$}
\put(134,15){$\sigma_1$}
\put(178,27){$\bullet$}
\put(175,40){$Y^2_1$}
\put(205,50){$t^2_0$}
\put(238,27){$\bullet$}
\put(235,40){$Y^3_0$}
\put(234,15){$\sigma_2$}
\end{picture}
\end{center}

\noindent
The superscripts refer to time. 
In words, we start in plane 1 at location $Y^1_0 =_d Y_0$, we follow the ODE for  
time $t^1_1 = \tau_1(Y^1_0)$ when we jump to $Y^1_1 = \Phi^1_{\tau_1(Y^1_0)}(Y^1_0)$ in plane 0, etc.

Consider now the process $Z(t)$ that is constant on each time interval and equal to the value at the left endpoint of the interval.
$Z(t)$ is a Markov alternating renewal process. Recall that in an Markov renewal process if we jumped into state $X_k$ at time $T_k$ then the
next state $X_{k+1}$ and the waiting time $t_{k+1}$ until we jump to $X_{k+1}$ have a joint distribution $(X_{k+1},t_{k+1})$
that depends on $X_k$ but is otherwise independent of the past before time $T_k$.  
See e.g., Chapter 10 of Cinlar \cite{Cinlar}.
Our process is ``alternating'' because the joint distribution used alternates. 

Call the sojurn in plane 1 combined with the sojurn in plane 0, a cycle. Let $\rho_1$ be the distribution of $Y_0$ on plane 1, and
let $\rho_0$ be the distribution of $Y_1$ on plane 0. Let $\bar\rho_i$ be the measure with
$$
\frac{d\bar\rho_i}{d\rho_i}(z) = E\tau_i(z)
$$
$\bar\rho_i$ has total mass $\nu_i$. Let $\pi$ be the measure that has density $\pi_i = \bar\rho_i/(\nu_1+\nu_0)$ on plane $i$  
than applying the Ergodic Theorem to the sequence of cycles $\{ Z(\sigma_{k-1} +t), 0 \le t \le \sigma_{k} - \sigma_{k-1} \}$ 
shows that our alternating renewal process $Z(t)$ has stationary distribution $\pi$. To do this note that the cycles are
simply a Markov chain on a space of paths starting from its stationary distribution. 

Let $V(t)$ be the process that starts at $Y^1_0$ at time 0, is at $\Phi^1_t(Y^1_0)$ for $t < \tau_1(Y^1_0)$ when it jumps to
$Y^1_1$, etc. Let $L(t)$ be the time of the last jump before time $t$, let $g(t) = f_1(Z(t))1_{\{t-L(t)>x\} }$ where 
$f_1$ is a bounded function with $f_1=0$ on
plane 0. It follows from the applying the Ergodic Theorem to the sequence of cycles that
$$
\frac{1}{t} \int_0^t g(s) \, ds \to \frac{E( f_1(Y_0) (\tau_1(Y_0) - x)^+ )}{ \nu_1 + \nu_0 }.
$$ 
For the use of this idea in the simpler setting of renewal theory see Section 3.3.2 in \cite{EOSP}. 

The last result when supplemented by the analogous conclusion for a function $f_0$ that vanishes on plane 1 gives the limiting joint distribution
of $(Z(t),A(t))$ where $A(t) = t - L(t)$ is the age (time since the last jump) at time $t$. Differentiating with respect to $x$ we see that on plane $i$ that
joint distribution is given by
$$
\chi_i(z,a) = \frac{\rho_i(z) P( \tau_i(z) > a)}{\nu_1+\nu_0}.
$$
From this we can compute the limiting distribution of $V(t)$. If $f_1$ is as above
$$
\lim_{t\to\infty} Ef_1(V(t)) = \int_{[0,\infty)^2} dz \, \int_0^\infty da \, Ef_1(\Phi^1_a(z)).
$$

\clearp 

\section{Derivation of the PDE} \label{sec:derPDE}

In this section we provide the derivation of \eqref{Qeqr}, and \eqref{Reqr}. Writing $x$ for the focal vertex, $y$ for a neighbor, $z$ a neighbor of the neighbor $y$, and $w$ some other vertex in graph.
By patiently considering all the possible changes one finds:

\begin{align*}
\frac{dA_{k,\ell}}{dt}& = \frac{\nu}{L} [kB_{k,\ell} - \ell A_{k,\ell}] &\quad\hbox{vote $y\to x$}\\
& + \frac{\nu}{L} [(\ell+1)A_{k-1,\ell+1} - \ell A_{k,\ell} ]  &\quad\hbox{vote $x\to y$} \\ 
& + \frac{\nu}{L} \alpha[(\ell+1)A_{k-1,\ell+1} - \ell A_{k,\ell} ] &\quad\hbox{vote $z\to y$, $\xi(y)=0$}\\ 
& + \frac{\nu}{L} \beta [(k+1)A_{k+1,\ell-1} - k A_{k,\ell} ] &\quad\hbox{vote $z\to y$, $\xi(y)=1$} \\ 
& + ( 1 - \nu/L) [(\ell+1) A_{k,\ell+1} - \ell A_{k,\ell}] &\quad\hbox{$y$ rewires away from $x$} \\
& +  ( 1 - \nu/L) p[(\ell+1) A_{k-1,\ell+1} - \ell A_{k,\ell}] &\quad\hbox{$x$ rewires and connects to a 1}\\
& + ( 1 - \nu/L) \eta[A_{k-1,\ell} - A_{k,\ell}] &\quad\hbox{$w$ with $\xi(w)=1$ rewires to $x$} \\
& + ( 1 - \nu/L) \eta [A_{k,\ell-1} - A_{k,\ell}] &\quad\hbox{$w$ with $\xi(w)=0$ rewires to $x$}
\end{align*}
For the last two equations note that for each discordant edge, one of the orientations brings a 1, the other a 0.

Let $Q(t,x,y) = \sum_{k,\ell} A_{k,\ell} x^k y^\ell$ and $R(t,x,y) = \sum_{k,\ell} B_{k,\ell} x^k y^\ell$.
Writing $Q_x$, $Q_y$, etc for partial derivatives, the second terms in lines 1, 2, 3, 5, 6 are
$$
\sum_{k,\ell} \ell A_{k,\ell} x^k y^\ell = y \sum_{k,\ell} A_{k,\ell} x^k \ell y^{\ell-1} = y Q_y.
$$
Similarly the second term in line 4 and the first in line 1 are
$$
\sum_{k,\ell} k A_{k,\ell} x^k y^\ell = x Q_x \quad\hbox{and}\quad \sum_{k,\ell} k B_{k,\ell} x^k y^\ell = x R_x.
$$
The first terms in lines 2, 3, 6 are
$$
\sum_{k,\ell} (\ell+1) A_{k-1,\ell+1} x^k y^\ell = xQ_y.
$$
The first term in line 4 is
$$
\sum_{k,\ell} (k+1) A_{k+1,\ell-1} x^k y^\ell = yQ_x.
$$
The first term in line 5 is
$$
\sum_{k,\ell} (\ell+1) A_{k,\ell+1} x^k y^\ell = Q_y.
$$
The first terms in lines 7 and 8 are
$$
\sum_{k,\ell} A_{k-1,\ell} x^k y^\ell = x Q \quad\hbox{and}\quad \sum_{k,\ell} A_{k,\ell-1} x^k y^\ell = y Q.
$$

Combining the formulas for the sums with formula for $dA_{k,\ell}/dt$ we have
\begin{align*}
Q_t & = \frac{\nu}{L}[xR_x - y Q_y] + \frac{\nu}{L}[xQ_y - y Q_y] + \frac{\nu}{L}\alpha [xQ_y - y Q_y] + \frac{\nu}{L}\beta [yQ_x - y Q_x]  \\
& + ( 1 - \nu/L) [Q_y - y Q_y] + p ( 1 - \nu/L) [xQ_y - yQ_y]  \\
& +  ( 1 - \nu/L) \eta [xQ-Q] +  ( 1 - \nu/L) \eta [yQ-Q]
\end{align*}
Taking the terms from the last expression in the order 4, 1.1 (the first part of line 1), $2+3+6$, $1.2+5$, 7, and 8, we have
\begin{align}
Q_t & =  \frac{\nu}{L}\beta (y-x) Q_x  +  \frac{\nu}{L}xR_x 
\nonumber\\
& + \left( \left[ \frac{\nu}{L} + \alpha \frac{\nu}{L} + p ( 1 - \nu/L) \right] (x-y)  + (1-y) - \frac{\nu}{L} \right) Q_y 
\label{Qeqr1}\\
& + ( 1 - \nu/L) \eta (x-1) Q + ( 1 - \nu/L) \eta (y-1) Q 
\nonumber 
\end{align}
where we have set $\eta = N_{10}/N$, and recall $\gamma = N_{10}/N_1$ 

\clearp

\section{Moment equations} \label{sec:momeq}

From the differential equations of $Q$, and $R$, namely \eqref{Qeqr}-\eqref{Reqr}, in Section \ref{sec:Limit as Ltoinfty}, we obtained differential equations for the limits $U$, and $V$ (see \eqref{Ueqr}-\eqref{Veqr}), where
\begin{align*}
U(a,b)&=\lim_{L \rightarrow \infty} Q(1+a/L,1+b/L), \\
V(a,b)&= \lim_{L \rightarrow \infty} R(1+a/L,1+b/L).
\end{align*}
In Section \ref{sec:phalf}, in the symmetric case, we then looked for solutions of the form 
\beq
U(a,b) = \sum_{m,n} c_{m,n} a^m b^n, \qquad V(a,b) = \sum_{m,n} c_{n,m} a^m b^n.
\nonumber
\eeq
In this section, we provide the derivation of the moment equations of Section \ref{sec:phalf}. To this end, letting $r_1=1/2+\nu\bar\alpha$, $r_2=3/2+\nu\bar\alpha$, it follows from \eqref{Ueqr} that we need
\begin{align*}
0  = \nu\bar \beta &\sum_{m,n} c_{m,n} m a^{m-1} b^n (b-a) + \nu \sum_{m,n} c_{n,m} m a^{m-1} b^n \\
& + \sum_{m,n} c_{m,n} n a^m b^{n-1} [r_1 a - r_2 b - \nu] 
+ \bar\eta \sum_{m,n} c_{m,n} a^{m+1} b^n + \bar\eta \sum_{m,n} c_{m,n} a^{m} b^{n+1}  \\
 = \nu\bar \beta & \sum_{m,n} c_{m,n} m (a^{m-1} b^{n+1} - a^m b^n) + \nu \sum_{m,n} c_{n,m} m a^{m-1} b^n \\
& + \sum_{m,n} c_{m,n} n [r_1 a^{m+1} b^{n-1} - r_2 a^m b^n  - \nu a^m b^{n-1}] \\
& + \bar{\eta} \sum_{m,n} c_{m,n} a^{m+1} b^n + \bar{\eta} \sum_{m,n} c_{m,n} a^{m} b^{n+1}.
\end{align*} 
The coefficient of $a^mb^n$ is
\begin{align*}
0 & = \nu\bar\beta c_{m+1,n-1} (m+1) - \nu\bar\beta c_{m,n} m  + \nu c_{n,m+1}(m+1) \\
& + r_1 c_{m-1,n+1}(n+1) -r_2 c_{m,n} n - \nu c_{m,n+1} (n+1) + \bar\eta c_{m-1,n}  + \bar\eta c_{m,n-1}.
\end{align*}
So rearranging and filling in the values of the $r_i$ we have \eqref{recr} 
\begin{align*}
0 & = \bar \eta c_{m-1,n} + \bar\eta c_{m,n-1} \\
& +  \nu\bar\beta c_{m+1,n-1} (m+1) - (\nu\bar\beta m +r_2n) c_{m,n}   + r_1 c_{m-1,n+1}(n+1) \\
& + \nu c_{n,m+1}(m+1) - \nu c_{m,n+1} (n+1). 
\end{align*}

\mn
{\bf First order.} Taking $m=1$, $n=0$ then $m=0$, $n=1$  in \eqref{recr} we have
\begin{align*}
0 & = \bar \eta c_{0,0} - \nu\bar\beta c_{1,0}   + (1/2+\nu\bar\alpha) c_{0,1}  + \nu c_{0,2} \cdot 2 - \nu c_{1,1},\\
0 & = \bar \eta c_{0,0} + \nu\bar\beta c_{1,0} - (3/2 +\nu\bar\alpha ) c_{0,1}   + \nu c_{1,1} - \nu c_{0,2} \cdot 2.
\end{align*}
Recalling $m!n! c_{m,n} = \partial_a^m \partial_b^n U(0,0)$ this becomes
\begin{align*}
0 & = \bar \eta U - \nu\bar\beta U_a   + (1/2+\nu\bar\alpha) U_b  + \nu U_{bb} - \nu U_{ab},\\
0 & = \bar \eta U + \nu\bar\beta U_a - (3/2 +\nu\bar\alpha ) U_b   + \nu U_{ab} - \nu U_{bb}.
\end{align*}
Since $p=1/2$, we get $U(0,0)=1/2$, and therefore when we add these equations we find
\beq
0 = 2\bar\eta U - U_b,
\label{etaUb}
\eeq
so in general $\bar\eta = U_b$. Using this  in the first equation  we have
$$
0 = - \nu\bar\beta U_a  + (1+\nu\bar\alpha) U_b  + \nu ( U_{bb} -  U_{ab} ).
$$

\mn
{\bf Second order.} Taking $m=2,1,0$ and $n=2-m$ in \eqref{recr} we get
\begin{align*}
0 & = \bar \eta c_{1,0} + 0 + 0 - \nu\bar\beta c_{2,0} \cdot 2  + 0  + (1/2+\nu\bar\alpha) c_{1,1} + \nu c_{0,3} \cdot 3 - \nu c_{2,1}, \\
0 & = \bar \eta c_{0,1} + \bar \eta c_{1,0} + \nu\bar\beta c_{2,0} \cdot 2 - (\nu\bar\beta  + 3/2 + \nu\bar\alpha) c_{1,1}   
+ (1/2+ \nu\bar\alpha) c_{0,2} \cdot 2,  \\
0 & = 0 + \bar \eta c_{0,1} + \nu\bar\beta c_{1,1} + 0 - (3/2 + \nu\bar\alpha) c_{0,2} \cdot 2 + 0  + \nu c_{2,1} - \nu c_{0,3} \cdot 3.
\end{align*}
Recalling again $m!n! c_{m,n} = \partial_a^m \partial_b^n U(0,0)$, 
this becomes
\begin{align*}
0 & = \bar \eta U_a - \nu\bar\beta U_{aa}    + (1/2+\nu\bar\alpha) U_{ab} + \nu U_{bbb}/2 - \nu U_{aab}/2, \\
0 & = \bar\eta/(U_a+U_b) + \nu\bar\beta U_{aa}  - (\nu\bar\beta  + 3/2 + \nu\bar\alpha) U_{ab}   
+ (1/2+ \nu\bar\alpha) U_{bb},  \\
0 & = \bar \eta U_b + \nu\bar\beta U_{ab} - (3/2 + \nu\bar\alpha) U_{bb}  + \nu U_{aab}/2 - \nu U_{bbb}/2.
\end{align*}
Rearranging gives
\begin{align*}
&(\nu/2)(U_{aab} -  U_{bbb})  = \bar \eta U_a + U_{ab}/2 + \nu (\bar\alpha U_{ab} - \bar\beta U_{aa} ),   
\nonumber\\
0 & = \bar \eta(U_a+ U_b) -3U_{ab}/2 + U_{bb}/2 + \nu[ \bar\alpha (U_{bb} - U_{ab}) ]  + \bar\beta (U_{aa} - U_{ab}) ], \\
&(\nu/2)(U_{bbb} - U_{bbb})  = \bar \eta U_b  - 3U_{bb}/2 + \nu[\bar\beta U_{ab} - \bar\alpha U_{bb}].
\nonumber
\end{align*}
The middle equation is \eqref{E21r}. If we add the equations we get 
$$
0 = 2\bar\eta (U_a+U_b) - U_{ab} - U_{bb}.
$$
Since $U_a+U_b = 1/2$ and $\bar\eta = U_b$ we get \eqref{E20r}
$$
U_{aa} + U_{ab} = U_b .
$$

\mn
{\bf Third order.} Taking $m=3,2,1,0$ and $n=3-m$ in \eqref{recr} we get
\begin{align*}
0 &= \bar\eta c_{2,0} + 0 + 0 - \nu \bar\beta c_{3,0} \cdot 3 \\
& + 0 + (1/2 + \nu\bar\alpha) c_{2,1} \cdot 1 + \nu c_{0,4} \cdot 4 - \nu c_{3,1} \cdot 1, \\
0 & = \bar\eta c_{1,1} + \bar\eta c_{2,0} + \nu\bar\beta c_{3,0} \cdot 3 - \nu\bar\beta c_{2,1} \cdot 2 \\
& - (3/2+ \nu\bar\alpha) c_{2,1} \cdot 1 + (1/2+\nu\bar\alpha) c_{1,2} \cdot 2 + \nu c_{1,3} \cdot 3 - \nu c_{2,2} \cdot 2, \\
0 & = \bar\eta c_{0,2} + \bar\eta c_{1,1} + \nu\bar\beta c_{2,1} \cdot 2 - \nu\bar\beta c_{1,2} \cdot 1 \\
& - (3/2+ \nu\bar\alpha) c_{1,2} \cdot 2 + (1/2+\nu\bar\alpha) c_{0,3} \cdot 3 + \nu c_{2,2} \cdot 2 - \nu c_{1,3} \cdot 3, \\
0 & = 0 + \bar\eta c_{0,2} + \nu\bar\beta c_{1,2} \cdot 1  + 0 \\
& - (3/2+ \nu\bar\alpha) c_{0,3} \cdot 3 + 0 + \nu c_{3,1} \cdot 1 - \nu c_{0,4} \cdot 4. 
\end{align*}
Recalling once more $m!n! c_{m,n} = \partial_a^m \partial_b^n U(0,0)$,
\begin{align*}
0 &= \bar\eta U_{aa}/2 - \nu \bar\beta U_{aaa}/2 
 +  (1/2 + \nu\bar\alpha) U_{aab}/2 + \nu U_{bbbb}/3!  - \nu U_{aaab}/3!,\\
0 & = \bar\eta U_{ab} + \bar\eta U_{aa}/2 + \nu\bar\beta U_{aaa}/2 - \nu\bar\beta U_{aab} \\
& - (3/2+ \nu\bar\alpha) U_{aab}/2 + (1/2+\nu\bar\alpha) U_{abb} + \nu U_{abbb}/2  - \nu U_{aabb} / 2, \\
0 & = \bar\eta U_{bb}/2 + \bar\eta U_{ab} + \nu\bar\beta U_{aab} - \nu\bar\beta U_{abb}/2 \\
& - (3/2+ \nu\bar\alpha) U_{abb} + (1/2+\nu\bar\alpha) U_{bbb}/2 + \nu U_{aabb}/2 - \nu U_{abbb}/2, \\
0 & = \bar\eta U_{bb}/2 + \nu\bar\beta U_{abb}/2  
 - (3/2+ \nu\bar\alpha) U_{bbb}/2 + 0 + \nu U_{aaab}/3! - \nu U_{bbbb}/3!. 
\end{align*}
Rearranging gives
\begin{align}
(\nu/3!)( {U_{aaab}  - U_{bbbb}}) &= \bar\eta U_{aa}/2 - \nu \bar\beta U_{aaa}/2  +  (1/2 + \nu\bar\alpha) {U_{aab}}/2, 
\nonumber\\
(\nu/2)( {U_{aabb}  - U_{abbb}})  & = \bar\eta U_{ab} + \bar\eta U_{aa}/2 
\nonumber\\
& + \nu\bar\beta U_{aaa}/2 - \nu\bar\beta {U_{aab}} {-} (3/2+ \nu\bar\alpha) U_{aab}/2 + (1/2+\nu\bar\alpha) {U_{abb}},  
\nonumber\\
(\nu/2)( {U_{abbb} - U_{aabb}}) & = \bar\eta U_{bb}/2 + \bar\eta U_{ab} 
\label{order3}\\
&+ \nu\bar\beta U_{aab} - \nu\bar\beta U_{abb}/2  {-} (3/2+ \nu\bar\alpha) U_{abb} + (1/2+\nu\bar\alpha) U_{bbb}/2, 
\nonumber\\
(\nu/3!)( {U_{bbbb} - U_{aaab}}) & = \bar\eta U_{bb}/2 + \nu\bar\beta U_{abb}/2 
 - (3/2+ \nu\bar\alpha) U_{bbb}/2.  \nonumber
\end{align}

If we add all the equations, the voter terms cancel out and  we conclude that 
$$
0 = \bar\eta( U_{aa} + 2 U_{ab} + U_{bb} ) - U_{aab}/2 { - } U_{abb} - U_{bbb}/2. 
$$
If we use the fact that the first equation plus the fourth is 0 then we get a second new equation without the fourth
order variables. We have another unused second order equation but we have three new equations for four new unknowns
so we are falling further behind. 

\clearp

\section{Proofs of Theorems \ref{NL1i} and \ref{NL2}} \label{sec:BSth}
In this section we prove Theorem \ref{NL1i} and Theorem \ref{NL2}, which are generalizations of Basu and Sly results for the thick graphs. While proving these two theorems we follow the efficient algorithm, that is at each step we pick a discordant edge. Now let $\{Z_i\}$ be independent Bernoulli(1/2). If $Z_i=1$ we pick the end with 1 to be the 
left end point of the oriented edge; if $Z_i=0$ we pick the end with 0. 
We will have a collection of counters $K(v,m)$ to decide what type of event occurs (rewiring or voting). 
Suppose that on the $m$th step we decide to  update the oriented edge $(v,u)$. 
If $K(v,m-1)=0$ then $v$ imitates $u$. If $K(v,m-1)>0$ then $K(v,m)=K(v,m-1)-1$. We set
$K(x,m)=K(x,m-1)$ for $x \neq v$.

To create and update these counters, 
let $X_i$ and $X_i'$ be independent geometric($\nu/L$), taking values in $\{0, 1, 2 \ldots \}$. 
In addition we have two sequences of indices $I_i$ and $I'_j$ that start with $I_0=0$ and $I'_0=N$, i.e., we use the second sequence to
initialize the counters $K(i,0) = X'_i$. 
Let $T_0=0$ and let $W_k$ be i.i.d.~uniform on the set of vertices $V$. Recall that the set $S$ is the collection of vertices with initial degree less than equal to $11L$.

\begin{itemize}
  \item 
If $v\in S$ and $v$ is in state 0, set $I'_m=I'_{m-1}$. If $K(m-1,v)>0$ define $T_m = \min\{ k > T_{m-1} : W_k \neq v, W_k \not\sim v \}$.
Rewire to $W_{T_m}$. If $K(v,m-1)=0$ $v$ imitates $u$. Let $I_m = I_{m-1}+1$, $K(v,m) = X_{I_m}$.

\item
If $v\not\in S$ or $v$ is in state 1, set $I_m=I_{m-1}$. If $K(m-1,v)>0$ define $T_m = \min\{ k > T_{m-1} : W_k \neq v, W_k \not\sim v \}$.
Rewire to $W_{T_m}$. If $K(v,m-1)=0$ $v$ imitates $u$. Let $I'_m = I'_{m-1}+1$, $K(v,m) = X_{I'_m}$.
\end{itemize}

\begin{lemma} \label{Dmax}
Suppose that initially all vertices have degree $L$. Let $D_{max}(m)$ be the maximum degree after $m$ updates.
Let $\ep>0$ and $t>0$. While $D_{max}(tNL) \le CL$, we have $D_{max}(tNL) \le (1+\ep+t)L$ with high probability.
\end{lemma}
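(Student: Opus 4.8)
The plan is to track how the degrees can move under the efficient algorithm and then reduce to a concentration estimate for i.i.d.\ uniform choices. The key structural observation is that a vertex $w$ can \emph{gain} an edge only by being selected as the target $W_{T_m}$ of a rewiring: a voting event changes only opinions, while when $v$ rewires away from $u$ the degree of $v$ is unchanged, $d_u$ decreases by $1$, and the degree of $W_{T_m}$ increases by $1$. Taking the drastic approach of ignoring the edges lost to rewiring, for every vertex $w$
$$
d_w(tNL) \;\le\; L + \#\{\, m \le tNL : \text{step } m \text{ is a rewiring with target } w \,\}.
$$
Hence it suffices to show that, with high probability, no vertex is chosen as a rewiring target more than $(t+\ep)L$ times among the first $tNL$ updates.

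First I would bound the total number of entries of the auxiliary sequence $W_1, W_2, \dots$ consumed by time $tNL$. There are at most $tNL$ rewirings (one event per update), and the $j$th rewiring, made from a vertex of degree at most $CL$ by the hypothesis $D_{max}\le CL$, consumes $1$ plus a number of rejected trials that is stochastically dominated by a geometric variable with success probability $\ge 1-(CL+1)/N \ge 1/2$, hence with mean $O(L/N)$. Summing over at most $tNL$ rewirings, the expected total number of rejected trials is $O(tCL^2) = O(tC N^{2a})$, which is $o(NL)$ precisely because $a<1$. By Markov's inequality there is an event $\mathcal A$ with $P(\mathcal A)\to 1$ on which the number of consumed entries satisfies $M \le (t+\ep/2)NL$.

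Next, fix a vertex $w$. On $\mathcal A$ the number of rewirings targeting $w$ is at most the number of indices $k \le \lceil (t+\ep/2)NL\rceil$ with $W_k = w$, a $\mathrm{Binomial}\bigl(\lceil (t+\ep/2)NL\rceil, 1/N\bigr)$ variable $B_w$ of mean at most $(t+\ep/2)L + 1$. Since the overshoot $(\ep/2)L$ is a fixed positive fraction of a mean that grows like $L\to\infty$, a Chernoff bound gives $P\bigl(B_w > (t+\ep)L\bigr) \le e^{-cL}$ for some $c=c(\ep,t)>0$. Because $L=N^a$, a union bound over the $N$ vertices costs only $N e^{-cN^a}\to 0$; combined with $P(\mathcal A^c)\to 0$ this yields $d_w(tNL) \le (1+\ep+t)L$ simultaneously for all $w$ with high probability, which is the claim.

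The only delicate point is the bookkeeping in the second step: one must check that, conditionally on the past and on which vertex performs a given rewiring, its target is uniform over the at least $N-1-CL$ non-neighbours, so that the number of times $w$ is hit is genuinely dominated by the i.i.d.\ binomial count used above. The hypothesis $D_{max}\le CL$ and the restriction $a<1$ enter exactly here, to keep the rejection overhead $o(NL)$; everything else is a routine Chernoff-plus-union-bound, which works comfortably because the degrees are polynomially large in $N$.
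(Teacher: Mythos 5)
Your proposal is correct and follows essentially the same route as the paper: bound the number of auxiliary uniform draws $W_k$ consumed by time $tNL$ using the geometric rejection overhead (controlled by the hypothesis $D_{max}\le CL$ and $a<1$), then apply a Chernoff-plus-union bound to $\sup_v |\{k \le (t+\ep/2)NL : W_k = v\}|$. The paper's write-up is terser (it invokes "standard large deviation arguments" where you use Markov on the $O(tCL^2)=o(NL)$ expected rejection count), but the decomposition and the key estimates are identical.
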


\begin{proof}
While $D_{max}(tNL) \le CL$ the number of values excluded by the conditions $ W_k \neq v$ and $W_k \not\sim v$ is $\le 1+CL$. Thus $T_m-T_{m-1}$ is stochastically dominated by a geometric random variable with success probability $1-\frac{1+CL}{N}$. Since $L=N^a$, with $a \in (0,1)$, using standard large deviation arguments it follows that if $L$ is large then $T_{tNL} \le (t+\ep/2)NL$ with high probability, and hence 
$$
\sup_v |\{ k \le (t+\ep/2)NL : W_k = v \}| \le (t+\ep) L.
$$
From this the desired result follows immediately.
\end{proof}

Now we are ready to prove Theorem \ref{NL1i}. Before going to the proof, let us first recall its statement once again.

\mn
{\bf Theorem \ref{NL1i}.}
{\it  Suppose $p \le 1/2$ and let $\ep>0$. If $\nu \le (0.15)/(1+3p)$ then with high probability $\tau < 3pNL$, and
the fraction of vertices in state 1 at time $\tau$ is between $p-\ep$ and $p+\ep$ with high probability.}

\begin{proof}
Let $X_m$ be the number of discordant edges after $m$ updates. Every time a rewiring occurs $X_m$ decreases by 1 with probability 1/2,
and stays the same with probability 1/2. After a voting event
$X_m$ can increase by at most $D_{max}(m)$. If $\tau > m$ then
\begin{align*}
E ( \exp(\lambda X_{m+1}/L) | {\cal F}_m ) \le \exp(\lambda X_m/L ) & \biggl[ (1-\frac{\nu}{L}) \left( 1+ \frac{1}{2} (e^{-\lambda/L} - 1) \right) \\
& + \frac{\nu}{L} \exp(\lambda D_{max}(m)/L) \biggr]
\end{align*}
Let $\tau_1$ be the first time $D_{max}(m) > (2.5+\ep)L$. It follows from Lemma \ref{Dmax} that $\tau_1> 1.5NL$ with high probability.
Let $\tau_0 = \min\{\tau,\tau_1\}$. When $\tau_0>m$ the quantity in square brackets is
$$
 \le  \left(1-\frac{\nu}{L}\right) \left( 1+ \frac{1}{2} (e^{-\lambda/L} - 1) \right)  + \frac{\nu}{L} \exp((2.5+\ep)\lambda)
$$
If $\lambda< \lambda_0(\ep)$ then $e^{(2.5+\ep)\lambda} \le 1 + (2.5+2\ep)\lambda$ and $e^{-\lambda/n} - 1 \le -(1-2\ep)\lambda/n$ then the above is
\begin{align}
& \le  \left(1-\frac{\nu}{L}\right) \left( 1 - (0.5-\ep) \frac{\lambda}{L} \right) + \frac{\nu}{L} (1+(2.5+2\ep)\lambda) 
\label{fineq}\\
& = 1 + (2.5+2\ep)\lambda \frac{\nu}{L} - (0.5-\ep) \frac{\lambda}{L} \left(1-\frac{\nu}{L}\right) 
\nonumber \\
& = 1 + (2.5\nu - 0.5(1-\nu/L)) + \ep (2\nu + (1-\nu/L)) \lambda/L
\nonumber
\end{align}

We first prove the result when $p=1/2$.
If we take $\nu = 3/50$ then $2.5\nu - 0.5 = -0.35$. If $\ep_0$ is small and $\lambda < \lambda_0(\ep_0)$ then for large $L$ the above is
$\le 1 - 0.34\lambda/L \le \exp( - 0.34\lambda/L)$. It follows that 
$$
P( \tau_0 > m ) \le E( \exp( \lambda X_m/n) 1_{\{\tau_0 > m\} } ) \le \exp( \lambda (X_0/L) - 0.34 m \lambda/L )
$$
Since there are $NL/2$ edges and each is discordant with probability $1/2$ we have $X_0 \le NL/2$
with high probability. Taking $m=(3/2)NL$
$$
P( \tau_0 > (3/2)NL ) \le \exp(  -0.02 \lambda N/2).
$$

To prove the result for $p<1/2$, note that if we only go out to time $3pNL$, \eqref{fineq} becomes
\begin{align*}
& \le  \left(1-\frac{\nu}{L}\right) \left( 1 - (0.5-\ep) \frac{\lambda}{L} \right) + \frac{\nu}{L} (1+(1+3p+2\ep)\lambda) 
\label{fineq}\\
& = 1 + (1+3p+2\ep)\lambda \frac{\nu}{L} - (0.5-\ep) \frac{\lambda}{L} \left(1-\frac{\nu}{L}\right)
\nonumber
\end{align*}
Let $\nu = 0.15/(1+3p)$. If $\ep$ is small enough then for large $L$ the above is
$\le 1 - 0.34p\lambda/L$. It follows that for $m \le 3pNL$
$$
P( \tau_0 > m ) \le E( \exp( \lambda X_m/n) 1_{\{\tau_0 > m\} } ) \le \exp( \lambda (X_0/L) - 0.34 m \lambda/L )
$$
Since there are $NL/2$ edges and each is discordant with probability $2p(1-p)$ we have $X_0 \le pNL$
with high probability. Taking $m=3pNL$
$$
P( \tau_0 > 3pNL ) \le \exp(  -0.02 \lambda pN/2).
$$
which completes the proof of Theorem \ref{NL1i}
\end{proof}

\medskip
Recall ${\cal G}(p,N,L) =$ graphs with vertex set $V = \{ 1, 2, \ldots, N \}$ labeled with 1's and 0's so that $N_1(G)=pn$ and 
the number of edges is $NL/2$. We now prove Theorem \ref{NL2}, after recalling its statement. 

\mn
{\bf Theorem \ref{NL2}.} {\it Let $\nu>0$ and $\ep>0$
There is a $p(\nu) < 1/2$ so that for all $G(0) \in {\cal G}(p,N,L)$ with $p \le p(\nu)$, we have with high probability
$\tau < 7NL$  and the fraction of vertices in state 1 at time $\tau$ is $\in (p-\ep,p+\ep)$.}

\mn
\begin{lemma} \label{densest}
If $G(0) \in {\cal G}(p,N,L)$ and $N$ is large enough then with high probability
the number of vertices in state 1 remains between $(p-\ep)$ and $(p+\ep)n$ throughout the first $7NL$ updates.
\end{lemma}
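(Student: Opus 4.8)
The plan is to observe that $N_1(m)$, the number of vertices in state $1$ after $m$ updates, changes only at voting events and then by exactly $\pm 1$, that it is (essentially) a martingale with bounded increments, and that this, together with $L=N^a=o(N)$, lets Doob's $L^2$ maximal inequality do the rest.

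First I would record that $N_1$ is a martingale for the natural filtration $\mathcal F_m$. On step $m$ a discordant edge is selected and then oriented by the fair coin $Z_m$, so conditioned on the unoriented edge a voting event, which makes the left endpoint imitate the right one, is equally likely to turn a $1$ into a $0$ as a $0$ into a $1$; hence $E[N_1(m)-N_1(m-1)\mid \mathcal F_{m-1}]=0$. Since $|N_1(m)-N_1(m-1)|\le 1$ and the increment is $\pm1$ exactly when step $m$ is a voting event, the martingale increments are orthogonal and
$$
E\bigl[(N_1(7NL)-N_1(0))^2\bigr]=E\bigl[\#\{\text{voting events among the first }7NL\text{ updates}\}\bigr]\le 7NL .
$$
(The bound $7NL$ is wasteful — the counter construction forces only $O(\nu N)$ voting events — but it already suffices.)

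Then the lemma follows: since $N_1(0)=pN$, Doob's maximal inequality gives
$$
P\Bigl(\max_{0\le m\le 7NL}|N_1(m)-pN|\ge \ep N\Bigr)\le \frac{E[(N_1(7NL)-pN)^2]}{\ep^2 N^2}\le \frac{7NL}{\ep^2 N^2}=\frac{7}{\ep^2}\,N^{a-1}\longrightarrow 0
$$
because $0<a<1$, and on the complementary event $N_1(m)$ stays strictly between $(p-\ep)N$ and $(p+\ep)N$ for every $m\le 7NL$.

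The point that will actually require care is that, because the rule deciding voting versus rewiring reads the counters $K(v,\cdot)$ rather than an independent $\nu/L$ coin, $N_1$ is not literally a martingale: conditioned on the oriented edge $(v,u)$, a vote occurs iff $K(v,m-1)=0$, and this indicator need not have exactly the same conditional mean at a $1$-vertex as at a $0$-vertex. I would handle this in one of two ways. Either verify directly that the bookkeeping does not bias $N_1$ — both counter streams $X_i$ and $X_i'$ are geometric$(\nu/L)$, so ``being primed to vote'' is symmetric in the two opinions and the conditional expectation above is still $0$ — or bound the accumulated drift $\sum_{j\le 7NL}E[N_1(j)-N_1(j-1)\mid \mathcal F_{j-1}]$: at any time only $O(\nu N/L)$ vertices have counter $0$, these split between the two opinions in proportion up to fluctuations of order $\sqrt{N/L}$, and $D_{\max}\le CL$ throughout by Lemma \ref{Dmax}, which makes the expected per-step change of $N_1$ of order $(NL)^{-1/2}$ and hence the total drift $O(\sqrt{NL})=O(N^{(1+a)/2})=o(N)$; the remaining martingale part is then controlled by Doob as above. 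This drift estimate is the step I expect to be the main obstacle.
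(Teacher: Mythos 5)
Your proof is correct and follows essentially the same route as the paper's: both rest on the fact that each update is a voting event with probability exactly $\nu/L$ and, when it is, flips $N_1$ by $\pm 1$ according to the fair orientation coin independently of the past, so $N_1$ is a martingale controlled by a maximal inequality; the paper counts the $O(\nu N)$ voting events directly, while you bound the quadratic variation by the cruder $7NL = o(N^2)$, which suffices because $a<1$. The martingale subtlety you flag is real but resolves exactly as in your first option: by memorylessness of the geometric counters, the conditional probability that the chosen root's counter reads zero is $\nu/L$ at every step regardless of the root's opinion or of which counter stream it came from, so the drift vanishes identically and your second, more laborious drift estimate is unnecessary.
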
 

\begin{proof}
On voting events the number of 1's changes by $\pm 1$ with equal probability independent of the past.
The expected number of voting steps is $7\nu N$ and with high probability will be $\le 8\nu N$. 
\end{proof}

\mn
Let $S$ be the set of vertices that at time 0 have degree $\le 11L$ and $T = V - S$. It follows that $|S| \ge 0.9N$. If not then $|T| \ge 0.1 N$,
and hence the number of edges in the graph is $\ge (0.1 N)(22L)/2 = 1.1NL/2$ contradicting our assumption that there are total $NL/2$ edges.

\begin{lemma} \label{Yest}
Call an $X_i$ stubborn if $X_i > 20L$. 
Let $Y = |\{ i \le L_{7NL} : X_i > 20L \}|$ be the number of stubborn elements which are used in
the first $7NL$ steps. Then with high probability $N_1(7NL) \ge Y$.
\end{lemma}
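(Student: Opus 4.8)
The plan is to reduce everything to the single claim that, with high probability, \emph{no} vertex that is ever assigned a stubborn counter subsequently returns to state $0$ before step $7NL$. Granting this, the conclusion is immediate: the index $I_m$ into the sequence $\{X_i\}$ advances only when a vertex of $S$ imitates a state-$1$ neighbour (a $0\to1$ flip), at which point that vertex's counter is set to the new value $X_{I_m}$; so the $Y$ stubborn values used in the first $7NL$ steps are drawn by $Y$ \emph{distinct} vertices (a vertex that never returns to state $0$ never again performs a $0\to1$ flip, hence never again advances $I_m$), and all of them are in state $1$ at time $7NL$, giving $N_1(7NL)\ge Y$.

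So fix $v\in S$ and the step $m_0$ at which it flips $0\to1$ and draws $X_{I_{m_0}}>20L$. While $v$ stays in state $1$ its counter drops by exactly $1$ at each update that \emph{selects} $v$ (picks an oriented discordant edge with tail $v$) and rewires, and the counter must hit $0$ before $v$ can flip again; it therefore suffices to show that with probability $1-o(1/N)$, uniformly in $v\in S$, $v$ is selected at most $20L$ times in $(m_0,7NL]$, and then take a union bound over $S$. I would do this by a potential argument on the discordant degree $D_m(v)$ of $v$. Because the orientation coin is fair, at each step the conditional probability that the updated oriented edge has tail $v$ equals the conditional probability that it has $v$ as head; hence the selection counting process of $v$ and the ``far-end use'' counting process of $v$'s discordant edges share the compensator $\sum_m D_m(v)/(2X_m)$, and their difference is a martingale. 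Every far-end use either rewires that edge away or (a voter event at the neighbour, which then imitates $v$) makes it concordant; the first type numbers at most $\deg_{m_0}(v)+\#\{\text{rewirings into }v\text{ after }m_0\}\le(18+\ep)L$ with high probability, since $v$'s own rewirings are degree-neutral, degrees are nonnegative, $\deg_0(v)\le 11L$ for $v\in S$, and $\#\{k\le T_{7NL}:W_k=v\}\le(7+\ep)L$ with high probability by the large-deviation input behind Lemma~\ref{Dmax}; the second type is at most the number of $0\to1$ flips ever performed by neighbours of $v$. Thus, on a good event, $\sum_m D_m(v)/(2X_m)\le(18+\ep)L+\Sigma_v+O(\sqrt{L\log N})$ with $\Sigma_v:=\sum_{u\ \mathrm{ever}\sim v}\#\{0\to1\text{ flips of }u\}$, and Freedman's inequality (the martingale has predictable quadratic variation $O(L+\Sigma_v)$) upgrades this to ``$v$ selected at most $20L$ times in $(m_0,7NL]$'' with probability $1-e^{-cL}=1-o(1/N)$ --- using $L=N^a$ --- provided $\Sigma_v=o(L)$.

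The genuine obstacle is exactly the bound $\Sigma_v=o(L)$: the neighbours of $v$ must collectively make only $o(L)$ state changes during the whole run, whereas the only cheap bound is the global number of voter events, which is $O(\nu N)$ and far too large. This is where the crude-but-clever part of \cite{BaSly} enters. One exploits the hypothesis $p\le p(\nu)$ together with Lemma~\ref{densest}: while the state-$1$ density stays $\le p+\ep$, discordant edges are sparse away from the few current state-$1$ vertices, so a typical neighbour of $v$ carries only $O(1)$ discordant edges, is therefore selected only $O(\mathrm{polylog}\,N)$ times, and --- since the counter it must exhaust between successive $0\to1$ flips has geometric mean $\sim L/\nu$ --- flips essentially never; a mild self-referential step (stubborn vertices stay in state $1$, so the state-$1$ population can only grow slowly) keeps this estimate valid throughout. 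Everything else --- the reduction in the first paragraph, the degree estimates, and the martingale concentration --- is routine given Lemmas~\ref{Dmax} and~\ref{densest}.
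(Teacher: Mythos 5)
Your first paragraph is exactly the paper's reduction: stubborn draws are made by distinct $S$-vertices at the moment they flip $0\to1$, and it suffices to show that each such vertex is subsequently selected as the root of an update at most $20L$ times before step $7NL$, so that a counter exceeding $20L$ is never exhausted. The divergence, and the gap, is in how that selection count is bounded. The paper's proof never looks at the voting activity of $v$'s neighbors at all: it budgets the rooted-at-$v$ events directly against the edges available at $v$ --- every such event severs an edge currently incident to $v$, and the supply is charged to the initial degree ($\le 11L$ since $v\in S$) plus the at most $8L$ rewirings into $v$ guaranteed by the event $H$ from Lemma \ref{Dmax}, giving $\le 19L < 20L$. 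Crude as that accounting is, it closes without any control on $\Sigma_v$.

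Your route instead equates, in compensator, root-selections of $v$ with head-selections of $v$, and the head-selections force you to count voter events in which a state-$0$ neighbor imitates $v$, i.e.\ $\Sigma_v$. You candidly admit you cannot prove $\Sigma_v=o(L)$, and the heuristic you offer in its place does not survive inspection: over $7NL$ steps there are about $7\nu N$ voter events and $7NL$ selections in total, so the \emph{average} vertex is selected $\Theta(L)$ times (not $O(\mathrm{polylog}\,N)$) and flips $\Theta(\nu)$ times; since $v$ has $\Theta(L)$ ever-neighbors, generically $\Sigma_v=\Theta(\nu L)$. That is not $o(L)$ for fixed $\nu$, and already for moderate $\nu$ it overwhelms the roughly $2L$ of slack left after your $(18+\ep)L$ term --- yet Theorem \ref{NL2} must hold for every $\nu>0$. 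So the missing step is not merely unproven but appears false as stated, and the detour through head-selections imports exactly the dependence on neighbors' flips that the paper's edge-budget argument is designed to avoid. To repair your proof you would either need to show the small-$p$ hypothesis genuinely suppresses $0\to1$ flips in the neighborhood of a stubborn vertex (which is essentially a new argument), or abandon the martingale comparison in favor of the direct counting of edges ever incident to $v$.
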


\begin{proof}
Note that stubborn $X$'s are used after a $v\in S$ in state 0 has flipped to 1. Let
$$
H = \{ | \{ i \le T_{7NL} : W_i = v \}| \le 8 L \hbox{ for all $v\in V$} \}
$$
By Lemma \ref{Dmax}, $H$ has high probability if $N$ is large.
On $H$ the number of rewirings to $v$ is $\le 8L$. If $v \in S$ then the initial degree $\le 11L$, so
the number of vertices that are ever connected to $v$ is $\le 19L$. If the number of rewiring events that are rooted 
at $v$ is $> 19n$ then we would run out of edges. This implies that if $N$ large then with high probability 
he number of events rooted at $v$ is $\le 20L$. Thus when a stubborn element is used the vertex will stay in
state 1 until time $7NL$.
\end{proof}

\begin{lemma} \label{RLSS}
Let $RL_{SS}$ denote the number of times a relabeling occurs when an edge with both endpoints
in $S$ is chosen. For $p$ sufficiently small $RL_{SS} \le \nu N/20$ with high probability.
\end{lemma}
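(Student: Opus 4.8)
The plan is to bound $RL_{SS}$ by combining the scarcity of discordant edges that can ever lie inside $S$ with the slow, order-$\nu/L$ rate at which a selected discordant edge triggers a relabeling (i.e.~a voting event, in which the chosen vertex imitates its neighbor). First I would pass to the high-probability event $\mathcal{H}$ on which the conclusions of Lemma \ref{Dmax} and Lemma \ref{densest} both hold: every $v\in S$ is the target of at most $8L$ rewirings during the first $7NL$ steps, and since a vertex of $S$ gains edges only by being a rewiring target its degree therefore stays below $19L$ throughout; while the number of vertices in state $1$ never exceeds $(p+\ep')N$ for an auxiliary tolerance $\ep'$ which we are free to take $\le p$, because Theorem \ref{NL2} only constrains the \emph{final} density to within the given $\ep$ and we may run the whole argument with $\ep'$ as small as we wish. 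On $\mathcal{H}$ every discordant edge with both endpoints in $S$ is incident to one of the at most $2pN$ state-$1$ vertices, each of degree at most $19L$, so at every one of the first $7NL$ steps the number of discordant edges internal to $S$ is at most $38pNL$.

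Second, I would run a compensator estimate in the spirit of the proof of Theorem \ref{NL1i}. Let $R_m$ count the relabelings on internal-to-$S$ edges in the first $m$ steps and let $X_m$ be the number of discordant edges after $m$ steps. Under the efficient algorithm the edge picked at step $m+1$ is uniform among the $X_m$ discordant edges, and the step is a relabeling only if the chosen vertex's counter equals $0$; since the counters are refreshed to independent geometric($\nu/L$) variables, the conditional probability of the latter is $\le\nu/L$. Hence, given $\mathcal{F}_m$, the increment $R_{m+1}-R_m$ is stochastically dominated by a Bernoulli variable of parameter $\le(\nu/L)(38pNL/X_m)$, so that $R_m$ minus its compensator $\sum_{k\le m}(38p\nu N)/X_k$ is a supermartingale, and a Freedman-type inequality reduces the claim to bounding this compensator. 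On the steps with $X_k\ge\delta NL$ (a fixed small $\delta$) the compensator contributes at most $38p\nu N\cdot 7NL/(\delta NL)=266\,p\nu N/\delta$, which is $\le\nu N/40$ once $p$ is small enough relative to $\delta$.

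The only substantial issue, and the one I expect to be the main obstacle, is the range of steps where $X_k<\delta NL$, on which the bound above is vacuous: one must show that on the event we actually care about, $\{\tau>7NL\}$, the discordant-edge count stays large until almost the end. I would do this using the downward drift of $X_m$ already exploited for Theorem \ref{NL1i}: a rewiring decreases $X_m$ by one with probability bounded below uniformly in the configuration, rewirings occur at all but a $\nu/L$ fraction of steps, and each voting event changes $X_m$ by at most $D_{\max}(m)\le 9L$ with the two directions (flipping a $1$ versus flipping a $0$) occurring with equal probability under the $Z_m$ coin, so the voting contribution over any stretch of length $\ell$ has mean of lower order and fluctuation $O(\sqrt{\nu\ell L})$; hence once $X_m$ falls below $\delta NL/2$ it cannot climb back above $\delta NL$ for $N$ large, and the drift carries it to $0$ within $O(\delta NL)$ further steps. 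Consequently, on $\{\tau>7NL\}$ one has $X_k\ge\delta NL/2$ for all but the last $O(\delta NL)$ steps, during which there are at most $O(\delta\nu N)$ relabelings in total; choosing first $\delta$ small and then $p(\nu)$ small enough absorbs this, together with the concentration slack, into $\nu N/40$, yielding $RL_{SS}\le\nu N/20$ with high probability. Matching the constants $\ep'$, $\delta$ and $p(\nu)$ here with those needed in Lemmas \ref{densest}--\ref{Yest} is the bookkeeping that requires care.
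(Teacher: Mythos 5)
Your first step (bounding the number of discordant $S$--$S$ edges by $38pNL$ on the event of Lemmas \ref{Dmax} and \ref{densest}) is fine, but the argument breaks at exactly the point you identify as the main obstacle, and the fix you propose does not work. Theorem \ref{NL2} assumes nothing about $G(0)$ beyond the number of edges and the number of $1$'s, so the total number of discordant edges $X_0$ can itself be of order $pNL$ (for instance, all the $1$'s lying in $S$ with essentially every incident edge discordant); then the ratio $38pNL/X_m$ is of order one from the very first step and the compensator bound gives nothing. Worse, the drift argument you invoke to show that $X_m$ cannot linger below $\delta NL$ is only valid for small $\nu$ --- it is precisely the mechanism of Theorem \ref{NL1i}, which requires $\nu\le 0.15/(1+3p)$ --- whereas Lemma \ref{RLSS} must hold for every $\nu>0$. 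A voting event changes $X_m$ by $d_v-2(\#\text{ of disagreeing neighbors of }v)$, and the $1\to 0$ and $0\to 1$ cases involve \emph{different} vertices $v$, so there is no cancellation: in a configuration of two like-minded clusters joined by a few discordant edges, every voting event increases $X_m$, and for large $\nu$ the net drift is upward. Indeed, the persistence of discordant edges for large $\nu$ is the content of Theorem \ref{BS1ii}, so no claim of the form ``once $X_m$ is small it is driven to $0$'' can be true in general.

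The paper's proof sidesteps the discordant-edge count entirely by exploiting the counter construction. Each $0\to1$ relabeling of a vertex of $S$ refreshes its counter with a fresh geometric$(\nu/L)$ variable, which is stubborn ($\ge 20L$) with probability $(1-\nu/L)^{20L}\ge e^{-26\nu}$; by Lemma \ref{Yest} a stubborn vertex of $S$ then remains in state $1$ through step $7NL$. Hence if $RL^+_{SS}$, the number of $0\to1$ relabelings internal to $S$, exceeded $\nu N/50$, then under the condition $1.2p<(\nu/50)e^{-26\nu}$ more than $1.1pN$ vertices would with high probability be frozen in state $1$, contradicting Lemma \ref{densest}. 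Finally, the $Z_i$ coins make each relabeling $0\to1$ or $1\to0$ with probability $1/2$ independently of the past, so the event $\{RL_{SS}>\nu N/20,\ RL^+_{SS}\le\nu N/50\}$ forces fewer than $40\%$ of the first $\nu N/20$ relabelings to be $0\to 1$ and has exponentially small probability. You would need to import this stubbornness/density-contradiction idea to close the gap; the compensator route cannot be repaired as stated.
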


\begin{proof} 
Let $RL^+_{SS}$ be the number that result in a change from 0 to 1. 
$$
RL^+_{SS} \le \min_i \{ \{j \le i : X_j \ge 20L \} > 1.1pN \}
$$
or using Lemma \ref{Yest} we have a contradiction of  Lemma \ref{densest}.
$P( X_j \ge 20L ) = (1-\nu/L)^{20L} \ge e^{-26\nu}$ if $L$ is large. It follows that if 
\beq
1.2p  < \frac{\nu}{50} e^{-26\nu}
\label{betaval}
\eeq
then with high probability there
are more than $1.1pN$ stubborn elements within the first $\nu N/50$ and hence $RL^+_{SS}$.

Each time a relabeling occurs it is equally likely to be $1 \to 0$ or $0 \to 1$, and these events are independent of each other, so
$$
P\left( RL_{SS} > \nu N/ 20, RL^+_{SS} \le \nu N/50 \right)  \le e^{-c\nu N/50} 
$$
The form of the right-hand side comes from the fact that the event is that fewer than 40\% of the first $\nu N/50$ have $Z_i=1$.
\end{proof}

\begin{lemma} \label{RSS}
Let $R_{SS}$ be the number of times an edge with both endpoints in $S$ was picked. For $p$
sufficiently small $R_{SS} \le NL/10$ with high probability.
\end{lemma}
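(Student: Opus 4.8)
\emph{Strategy.} The plan is to bound $R_{SS}$ by counting the ``births'' of present-and-discordant $S$-$S$ edges. For an unordered pair $\{v,u\}$ with $v,u\in S$, consider the maximal time-intervals in $[0,7NL]$ during which the edge $\{v,u\}$ is present and discordant; the first time the algorithm picks that edge the interval ends --- a rewiring deletes the edge, a voting event makes it concordant --- so each maximal interval absorbs at most one pick. Hence $R_{SS}$ is at most the number of such maximal intervals, which equals: (a) the number of $S$-$S$ edges that are present and discordant at time $0$; plus (b) the number of rewirings that create a discordant $S$-$S$ edge (root $v\in S$, target $W\in S$, opposite opinions); plus (c) the number of pairs (vertex $w$, time $m$) at which a voting event at $w\in S$ turns a present concordant $S$-$S$ edge at $w$ discordant. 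Throughout we work on the high-probability event (cf.\ Lemma \ref{Dmax} and the event $H$ in Lemma \ref{Yest}) on which $T_{7NL}\le 8NL$, every vertex is the target of at most $8L$ rewirings, and consequently every $w\in S$ has degree $\le 19L$ at all times $m\le 7NL$ and is the root of at most $20L$ events in $[0,7NL]$.

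\emph{The engine.} Let $I:=I_{7NL}$ be the number of times a vertex of $S$ flips $0\to 1$ in $[0,7NL]$; in the special construction this is the number of indices consumed from the pre-drawn i.i.d.\ Geometric$(\nu/L)$ sequence $(X_i)$. Call $X_i$ \emph{stubborn} if $X_i>20L$. On the event above, a vertex assigned a stubborn $X_i$ never returns to state $0$ (its counter cannot reach $0$ in $\le 20L$ rooted events), hence is never reassigned another $X_j$; so distinct stubborn indices correspond to distinct vertices that are in state $1$ at time $7NL$. Fix $M:=5pN e^{21\nu}$. The prefix $X_1,\dots,X_M$ is i.i.d.\ with $P(X_i>20L)\to e^{-20\nu}\ge \tfrac12 e^{-21\nu}$ for $L$ large, so by Chernoff at least $\tfrac14 e^{-21\nu}M$ of them are stubborn with high probability; if $I\ge M$ all of these are assigned, forcing $N_1(7NL)\ge \tfrac14 e^{-21\nu}M>(p+\epsilon)N$ and contradicting Lemma \ref{densest}. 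Thus $I<M$ with high probability (the regime of $p$ so small that $M<\log^2 N$ is trivial, since then $I<\log^2 N$ and all bounds below are $o(NL)$). Two consequences: the number of voting events rooted at vertices of $S$ is at most $I$ (for $0\to1$) plus $I+|\{v\in S:\xi_0(v)=1\}|$ (for $1\to0$), hence $\le 2M+pN$; and, since a $1$-phase of an $S$-vertex performs at most its counter value in rewirings, the number of rewirings rooted at $S$-vertices currently in state $1$ is at most $\sum_{i\le M}X_i+\sum_{j\le pN}X_j'$, which is $\le 3ML/\nu$ with high probability by a negative-binomial Chernoff bound.

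\emph{The three terms.} (a): the opinion-$1$ endpoint of such an edge lies in $S$ and has initial degree $\le 11L$, so there are at most $pN\cdot 11L$ of them. (c): a vote at $w\in S$ changes the concordance of at most $\deg_m(w)\le 19L$ edges at $w$, and there are at most $2M+pN$ such votes, so $(c)\le 19L(2M+pN)=O(pNL\,e^{21\nu})$. (b): split on the state of the root $v\in S$. If $\xi(v)=1$, bound the count by the number of rewirings rooted at state-$1$ vertices of $S$, already controlled by $3ML/\nu=O(pNL\,e^{21\nu}/\nu)$. If $\xi(v)=0$, the created edge is discordant and $S$-$S$ only if the uniform target $W$ lies in $S$ and is a $1$; conditionally on the past this has probability at most $(p+\epsilon)N/(N-1-\max_w\deg_m(w))\le 1.1p$, and summing these indicators over the at most $7NL$ rewirings and applying Freedman's inequality gives $O(pNL)$. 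Adding (a)+(b)+(c), the total is $O\!\big(pNL(1+\nu^{-1})e^{21\nu}\big)$, so $R_{SS}\le NL/10$ holds with high probability as soon as $p\le p(\nu)$ with $p(\nu)=c\,\min(1,\nu)\,e^{-21\nu}$ for a small absolute constant $c$ --- comparable to the threshold already needed in Lemma \ref{RLSS} and claimed for Theorem \ref{NL2}.

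\emph{Main obstacle.} The lemma is false without the engine: before we know $I$ is small, terms (b) (state-$1$ roots) and (c) are of order $\nu NL$, far above $NL/10$ for $\nu$ of order $1$. The delicate point in bounding $I$ is that $I$ is itself random and coupled to $(X_i)$; the remedy, as above, is to test a \emph{fixed} prefix $X_1,\dots,X_M$ of the pre-drawn sequence and to exploit that stubborn assignees are automatically distinct permanent $1$'s, so a large $I$ would manufacture too many $1$'s. The only other item needing care is the uniform-in-$m$ degree bound $\deg_m(w)\le 19L$ for $w\in S$ under an adversarial $G(0)\in{\cal G}(p,N,L)$, which is the $\mathcal G$-analogue of Lemma \ref{Dmax} and follows from $T_{7NL}\le 8NL$ together with $\sup_w \#\{k\le 8NL:W_k=w\}\le 8L$.
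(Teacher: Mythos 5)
Your argument reaches the right conclusion, but it takes a much longer road than the paper, which disposes of this lemma in three lines. The paper's proof is a pure reduction to Lemma \ref{RLSS}: by the memorylessness of the geometric counters, each pick of a discordant edge triggers a relabeling with probability $\nu/L$ independently of the past, so if $R_{SS}$ exceeded $NL/10$ then the number of relabelings on $S$--$S$ edges would, by a lower-tail bound for a $\hbox{Binomial}(NL/10,\nu/L)$ variable, exceed $\nu N/20$ except with probability $e^{-\nu N/80}$ --- contradicting Lemma \ref{RLSS}. In other words, the ratio of picks to relabelings is concentrated at $L/\nu$, so the already-established bound $RL_{SS}\le \nu N/20$ converts directly into $R_{SS}\le NL/10$ with no further structural analysis of the evolving graph. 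You instead bound $R_{SS}$ combinatorially by the number of births of present-and-discordant $S$--$S$ edges, which forces you to re-derive the stubborn-counter bound (your ``engine'' is essentially Lemmas \ref{densest}, \ref{Yest} and \ref{RLSS} rolled together) and then to control separately the creations by rewiring (split by the state of the root) and by voting, invoking uniform degree bounds, concentration for sums of geometrics, and Freedman's inequality. Both routes yield a threshold of the same shape, $p(\nu)\asymp \min(1,\nu)\,e^{-c\nu}$; what your version buys is a self-contained proof that does not pass through $RL_{SS}$ and makes the sources of discordant $S$--$S$ edges explicit, at the cost of substantially more machinery. Once you observe that each pick is a relabeling with conditional probability exactly $\nu/L$, the entire interval-counting apparatus can be discarded.
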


\begin{proof} Each time an edge is picked, it leads to a relabeling with probability $\nu/L$. The result now follows 
from Lemma \ref{RLSS} and 
$$
P( R_{SS} > NL/10, RL_{SS} \le \nu N/20 ) \le P( Binomial(NL/10,\nu/L) \le \nu N/20 ) \le e^{-\nu N/80}
$$
To get the Binomial large deviations we use the fact that if $X=\hbox{Binomial}(M,p)$ then
$$
P( X \le M(p-z)) \le \exp(-Nz^2/2p)
$$
See e.g., \cite[Lemma 2.8.5]{RGD}. Here $M=NL/10$, $p=\nu/L$ $Mp=\nu N/10$, $Mz=\nu N/20$, $z=\nu/2L$, so we have
$$
\frac{Nz^2}{p} = \frac{NL}{10} \cdot \frac{\nu^2}{4L^2} \cdot \frac{L}{2\nu} = \frac{\nu N}{80}
$$
\end{proof}

\begin{lemma} \label{RST}
Let $R_{ST}$ be the number of times a disagreeing edge was picked with one endpoint in $S$ and the other in $T$.
For $p$ sufficiently small $R_{ST} \le 2.8NL$ with high probability.
\end{lemma}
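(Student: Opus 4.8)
The plan is to restrict attention to the first $7NL$ updates and to combine two ingredients: (a) that the total number of relabelings (voting events) in that window is $o(NL)$, and (b) a conservation (balance) argument for the number of edges touching $T$. For (a), exactly as in the proof of Lemma \ref{densest}, each of the at most $7NL$ picks decrements the counter of its root, counters reset to independent geometric$(\nu/L)$ variables after each vote, so the number of relabelings has expectation $\approx 7\nu N$ and is $\le 8\nu N$ with high probability; since $L=N^a\to\infty$ this is $o(NL)$. For (b), write $E_T(m)$ for the number of graph edges with at least one endpoint in $T$ after $m$ updates. Only rewirings change $E_T$, and a rewiring of $\{v,u\}$ to $\{v,W\}$ leaves $E_T$ unchanged when $v\in T$ (it destroys and immediately re-creates a $T$-incident edge), while for $v\in S$ one has $\Delta E_T=\mathbf 1\{W\in T\}-\mathbf 1\{u\in T\}$. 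Summing over the first $7NL$ updates and using $E_T(7NL)\ge 0$,
\[
\#\{\text{$S$-rooted rewirings whose other endpoint lies in }T\}\ \le\ E_T(0)+\#\{\text{rewirings whose target lies in }T\}.
\]
Now $E_T(0)\le NL/2$ since the $S$-$S$, $S$-$T$, $T$-$T$ edges partition the $NL/2$ edges of $G(0)$, and $|T|<N/11$ (otherwise the $T$-vertices alone, each of degree $>11L$, would carry more than $NL/2$ edges). By Lemma \ref{Dmax}, with high probability $D_{max}\le (8+\ep)L$ throughout the window, so every rewiring picks its new neighbor uniformly among at least $N-1-(8+\ep)L=N(1-o(1))$ non-neighbors; hence the conditional probability of landing in $T$ is $\le\tfrac1{11}(1+o(1))$, and a Chernoff bound over the $\le 7NL$ rewirings gives $\#\{\text{target in }T\}\le 0.7\,NL$ with high probability. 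Thus the number of $S$-rooted rewirings of $S$-$T$ discordant edges is $\le 1.2\,NL$ with high probability.

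It remains to convert this into a bound on $R_{ST}$ itself. Among the $R_{ST}$ picks of $S$-$T$ discordant edges, the fair coin $Z_m$ designates the $S$-endpoint as root in at least $(\tfrac12-\ep)R_{ST}$ of them with high probability (Chernoff on the coins, which are independent of which discordant edge is chosen), and of those all but the $\le 8\nu N=o(NL)$ picks that trigger a vote are genuine $S$-rooted rewirings of an $S$-$T$ edge. Combining with the previous paragraph, $(\tfrac12-\ep)R_{ST}-o(NL)\le 1.2\,NL$, so $R_{ST}\le 2.5\,NL+o(NL)\le 2.8\,NL$ with high probability for $L$ large. (The hypothesis that $p$ is small is not really used here; it is kept for uniformity with the companion bounds on $R_{SS}$ and $R_{TT}$.)

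The step I expect to be the main obstacle is the bookkeeping in the balance argument. The naive claim "each pick of an $S$-$T$ discordant edge destroys an $S$-$T$ edge, so $R_{ST}^{rw}$ is at most the number of $S$-$T$ edge creations" overcounts, because a rewiring rooted at a high-degree $T$-vertex destroys a $T$-incident edge but immediately re-creates one, so the net erosion of $E_T$ is driven purely by the $S$-rooted rewirings; isolating that fact (and then checking that the coin $Z_m$ really splits the $S$-$T$ rewirings evenly enough, modulo the negligible voting picks) is where the care lies. Controlling the rewiring targets through Lemma \ref{Dmax} is routine once $L=N^a$ with $a<1$, since then $D_{max}=o(N)$ and each target is essentially uniform on $V$.
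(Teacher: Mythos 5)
Your proof is correct, and while it is built from the same three ingredients as the paper's (a conservation count on edges classified by $S$/$T$ membership, the fair coin $Z_m$ that puts the root at the $S$-end of an $S$--$T$ edge half the time, and the near-uniformity of the rewiring target when $D_{max}=o(N)$), the decomposition is genuinely different and in one respect cleaner. The paper tracks \emph{creations} of $S$--$S$ edges: each $S$--$T$ rewiring lands in $(S,S)$ with probability at least $(1/2)(8/9)=4/9$, and since created $S$--$S$ edges can only be destroyed by picks counted in $R_{SS}$, one gets $W_{ST\to SS}\le NL+R_{SS}\le 1.1NL$ and hence $W_{ST}\le 2.7NL$. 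This makes Lemma \ref{RST} depend on Lemma \ref{RSS}, which in turn depends on $p$ being small. You instead track \emph{destructions} of $T$-incident edges: only $S$-rooted rewirings change $E_T$, each such rewiring of an $S$--$T$ edge decreases $E_T$ by one unless the target lands in $T$, and the number of $T$-targets is controlled directly by $|T|\le N/11$ and $D_{max}=o(N)$. The resulting bound $A\le E_T(0)+0.7NL\le 1.2NL$ needs neither $R_{SS}$ nor the hypothesis that $p$ is small, as you correctly observe; you also separate the two sources of randomness (the coin gives the factor $1/2$, the target randomness is used only to bound the $T$-target count), whereas the paper fuses them into the single probability $4/9$. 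The only points needing the usual care --- that the root indicators along the subsequence of $S$--$T$ picks are conditionally fair coins so Chernoff/Azuma applies, and that the "fewer than $(\frac12-\ep)R_{ST}$ are $S$-rooted" event must be handled for the random count $R_{ST}$ by arguing on the first $2.8NL$ such picks --- are identical to steps the paper itself takes, and your constants ($1.2NL/(1/2-\ep)+o(NL)\le 2.5NL$) land comfortably under $2.8NL$.
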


\begin{proof} Let $W_{ST}$ be the number of rewiring moves  with one endpoint in $S$ and the other in $T$. On each of these moves
1/2 the time it is rewired with the root at $S$ and if $n$ is large then with probability at least
$$
\frac{|S|-1}{n-1} \ge \frac{8}{9} 
$$
the new vertex is in $S$. Let $Y_{SS}$ be the number of $S$ to $S$ edges at the end and $W_{ST\to SS}$ be the number of $(S,T)$ to $(S,S)$ rewirings.
We must have
$$
NL \ge Y_{SS} \ge W_{ST\to SS} - R_{SS}
$$
so if $R_{SS} \le NL/10$ (which has high probability by Lemma 6.5) we must have $W_{ST\to SS} \le 1.1NL$. If $W_{ST} \ge 2.7NL$
then we expect $W_{ST \to SS} \ge (2.7NL)(4/9) = 1.2NL$ so 
$$
P(  W_{ST\to SS} \le 1.1NL , W_{ST} \ge 2.7NL ) \le exp(-cNL)
$$
Since each time a disagreeing edge is picked, with probability $1-\nu/L$ it leads to a rewiring,
$$
P( R_{ST} > 2.8NL, W_{ST} \le 2.7NL ) \le e^{-cNL}
$$
which completes the proof. \end{proof}

\begin{lemma} \label{RTT}
Let $R_{TT}$ be the number of times a disagreeing edge was picked with both endpoints in $T$.
For $p$ sufficiently small $R_{TT} \le 4NL$ with high probability.
\end{lemma}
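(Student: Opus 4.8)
The plan is to run the edge-accounting argument behind Lemma \ref{RST} once more, now for edges with both endpoints in $T$. First I would separate voting from rewiring: each of the first $7NL$ updates that picks a disagreeing $(T,T)$ edge is either a voting event or a rewiring move, and by the estimate in the proof of Lemma \ref{densest} there are at most $8\nu N$ voting events altogether with high probability, which is $o(NL)$ since $L=N^a$ with $a<1$. So it suffices to bound $W_{TT}$, the number of rewiring moves whose deleted edge has both endpoints in $T$; such a move has its root $v\in T$, deletes an edge $(v,u)$ with $u\in T$, and attaches $v$ to a new neighbour $w$, and I would split $W_{TT}=W_{TT\to TS}+W_{TT\to TT}$ according to whether $w\in S$ or $w\in T$.

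Next I would control these counts through conservation of $(T,T)$ edges. Let $Y_{TT}(m)$ be the number of edges with both endpoints in $T$ after $m$ updates. Only rewirings change the edge set, and a short case check shows that $Y_{TT}$ increases by one exactly when an $(S,T)$ edge is rewired with the root at its $T$-end and new neighbour in $T$ (call the number of such events $W_{TS\to TT}$), decreases by one exactly at the $W_{TT\to TS}$ events, and is unchanged in every other case. Since $Y_{TT}(m)\ge 0$ and $Y_{TT}(0)\le NL/2$ (the total number of edges),
\[
W_{TT\to TS}\ \le\ Y_{TT}(0)+W_{TS\to TT}\ \le\ \tfrac12 NL+W_{TS\to TT}.
\]
Now $W_{TS\to TT}$ and $W_{TT\to TT}$ are disjoint families of rewirings whose new neighbour lands in $T$, so writing $W_{\to T}$ for the total number of rewirings with new neighbour in $T$ we get $W_{TT}\le \tfrac12 NL+W_{\to T}$. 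To bound $W_{\to T}$, recall that the new neighbour at the $m$-th rewiring is the first term $W_{T_m}$ of the i.i.d.\ uniform sequence $(W_k)_{k\ge1}$ that is neither $v$ nor a current neighbour of $v$, so the indices $T_m$ are strictly increasing; on the high-probability event of Lemma \ref{Dmax} (on which the portion of $(W_k)$ consumed during the first $7NL$ updates has length at most $8NL$) one gets $W_{\to T}\le \#\{k\le 8NL:\ W_k\in T\}$. Since $|S|\ge 0.9N$ forces $|T|\le 0.1N$, this count is stochastically dominated by a $\mathrm{Binomial}(8NL,0.1)$ variable and is therefore $\le 0.9NL$ with high probability. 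Putting the bounds together,
\[
R_{TT}\ \le\ W_{TT}+8\nu N\ \le\ \tfrac12 NL+0.9NL+8\nu N\ <\ 4NL
\]
once $N$ is large, which is the assertion. (The constant $4NL$ is very generous — the argument actually gives about $1.4NL$ — and one gets sharper constants by bounding $W_{TS\to TT}$ via $R_{ST}\le 2.8NL$ from Lemma \ref{RST}, which is the one place where ``$p$ sufficiently small'' really enters; with the crude bound above it is not needed here.)

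The hard part, exactly as in Lemmas \ref{RSS}--\ref{RST}, will be to make the $W_{\to T}$ bound legitimate when the initial graph is assumed only to have $NL/2$ edges, so that some vertices of $T$ may start with very large degree: one must know that the rejection step $W_k\not\sim v$ does not run through $(W_k)$ too quickly, i.e.\ that $D_{max}$ stays of smaller order than $N$ throughout the first $7NL$ updates. This is precisely what Lemma \ref{Dmax} and its companion bound (no vertex is chosen to receive a rewiring more than about $8L$ times, so that vertices of $S$ keep degree at most $19L$) supply, and I would carry out the whole argument on that high-probability event, as is done for the earlier lemmas in the chain.
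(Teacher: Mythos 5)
Your proof is correct, but it takes a genuinely different accounting route from the paper's. The paper makes Lemma \ref{RTT} the last link in the chain \ref{RSS}$\to$\ref{RST}$\to$\ref{RTT}: it notes that a rewiring rooted at a $(T,T)$ edge produces an $(S,T)$ edge with probability at least $8/9$ (since $|S|\ge 0.9N$ and the new neighbour is essentially uniform), that each such created $(S,T)$ edge either survives to the end (at most $NL$ of these) or is destroyed by a later pick of a discordant $(S,T)$ edge (at most $R_{ST}\le 2.8NL$ by Lemma \ref{RST}), so $W_{TT\to ST}\le 3.8NL$, and then a large-deviations step converts this into $W_{TT}\le 3.9NL$ and $R_{TT}\le 4NL$. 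You instead conserve the count $Y_{TT}$ of $(T,T)$ edges directly — your increment/decrement case check is right — and bound the inflow $W_{TS\to TT}+W_{TT\to TT}\le W_{\to T}$ by the binomial number of uniform picks $W_k$, $k\le 8NL$, landing in the small set $T$. What this buys: the lemma is decoupled from Lemma \ref{RST} and from the hypothesis that $p$ is small (which enters the paper's version only through $R_{ST}$), and the constant improves to about $1.4NL+o(NL)$. What it costs: nothing beyond what the paper already assumes — both arguments need $T_{7NL}\le 8NL$, i.e.\ the degree control of Lemma \ref{Dmax}, and you correctly flag that this is the delicate point when the initial graph is only assumed to have $NL/2$ edges (vertices of $T$ may start with very large degree), a gap the paper itself glosses over. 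The reduction $R_{TT}\le W_{TT}+8\nu N$ via the voting-event count from Lemma \ref{densest} is also sound.
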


\begin{proof} From the proof of Lemma \ref{RST}, we see that after rewiring an edge with both endpoints in $T$ the chance is becomes
an edge with one edge in $S$ and one in $T$ is $\ge 9/10$. Let $Y_{ST}$ be the number of $S$ to $T$ edges at the end and 
$W_{TT\to ST}$ be the number of $(T,T)$ to $(S,T)$ rewirings. We must have
$$
NL \ge Y_{ST} \ge W_{TT\to ST} - R_{ST}
$$
so if $R_{ST} \le 2.8NL$ (which has high probability by Lemma \ref{RST}) we must have $W_{TT\to ST} \le 3.8NL$.
Arguing as in the previous lemma we can conclude that with high probability $W_{TT} \le 3.9NL$ and $R_{TT} \le 4NL$.
\end{proof}

\mn
{\bf Proof of Theorem 6.} From Lemmas \ref{RSS}, \ref{RST}, and \ref{RTT} we see that for $p <p(\nu)$ and $G(0) \in {\cal G}(p,N,L)$
we have $R_{SS} + R_{ST} + R_{TT} \le 6.9 NL$ with high probability so there are no discordant edges after $7NL$ updates.

\clearp

\end{document}